\documentclass[12pt,a4paper,twoside]{amsart}
\usepackage{amsmath}
\usepackage[all]{xy}
\usepackage{amssymb}
\usepackage{amsfonts}
\usepackage{a4}
\numberwithin{equation}{section}

\date{}

\newcommand{\coker}{{\rm cok}\,}
\newcommand{\image}{{\rm im}\,}
\renewcommand{\O}{{\mathcal{O}}}
\newcommand{\C}{{\mathbb{C}}}

\newcommand{\Hom}{\mathrm{Hom}}
\newcommand{\Trd}{\mathrm{Trd}}
\newcommand{\Nrd}{\mathrm{Nrd}}
\newcommand{\Span}{\mathrm{Span}}
\newcommand{\Stab}{\mathrm{Stab}}
\def\suml{\sum\limits}
\def\char{\text{\rm char}}

\def\G{\mathbf{G}}
\def\T{\mathbf{T}}
\def\calb{\mathcal{B}}
\newcommand\m{\mathfrak{m}}
\def\vp{\varphi}
\def\bbr{\mathbb{F}}
\def\bbf{\mathbb{R}}
\def\bbz{\mathbb{Z}}
\def\bbn{\mathbb{N}}
\def\e{\epsilon}
\def\half{\frac12}
\def\intl{\int\limits}

\def\og{\overline{\gamma}}
\def\eqadef{\mathop{=}\limits^{\rm def}}
\def\w{\omega}
\def\g{\gamma}

\newtheorem{theorem}{Theorem}[section]
\newtheorem*{theorem*}{Theorem}
\newtheorem{lemma}[theorem]{Lemma}
\newtheorem*{lemma*}{Lemma}

\newtheorem{proposition}[theorem]{Proposition}
\newtheorem{problem}[theorem]{Problem}

\newtheorem{corollary}[theorem]{Corollary}
\newtheorem{example}[theorem]{Example}



\theoremstyle{definition} 
\newtheorem{definition}[theorem]{Definition}
\newtheorem*{definition*}{Definition}
\newtheorem*{thm1*}{Theorem A1}
\newtheorem*{thm2*}{Theorem A2}
\newtheorem*{claim1*}{Claim 1}
\newtheorem*{claim2*}{Claim 2}
\newtheorem{conjecture}[theorem]{Conjecture}
{Corollary}
\newtheorem*{conjecture*}{Conjecture}

\newtheorem*{remark*}{Remark}
\newtheorem*{remarks*}{Remarks}
\newtheorem*{remark8.4}{Remark 8.4}
\def\bbz{\mathbb{Z}}
\def\bbq{\mathbb{Q}}
\def\bbf{\mathbb{F}}
\def\bbr{\mathbb{R}}

\def\bbc{\mathbb{C}}

\newcommand{\ix}{x}
\newcommand\GL{\textrm{GL}}
\newcommand\SL{\textrm{SL}}
\newcommand\SU{\textrm{SU}}
\newcommand\SO{\textrm{SO}}
\newcommand\Sp{\textrm{Sp}}

\newcommand\rk{\textrm{rk}\,}
\newcommand\Tr{\textrm{Tr}}
\newcommand{\pre}[2]{\vskip .2in\noindent\textbf{#1}. \textit{#2}\vskip .1in}
\renewcommand{\theenumi}{\roman{enumi}}

\def\calo{\mathcal{O}}

\def\calz{\mathcal{Z}}

\def\calm{\mathcal{M}}

\def\bg{\mathbf{G}}

\newcommand{\bt}{\mathbf{T}}
\def\a{\alpha}

\def\eqdef{{\buildrel \rm def \over = }}

\theoremstyle{remark}  

\newtheorem*{thmls*}{Theorem (Liebeck-Shalev)}

\begin{document}

\title{Representation Growth of Linear Groups}

\author{Michael Larsen}
\address{
Department of Mathematics\\
Indiana University\\
Bloomington, IN 47405 USA} \email{larsen@math.indiana.edu}

\author{Alexander Lubotzky}

\thanks{This research was supported by grants from the NSF and the
BSF (US-Israel Binational Science Foundation).}

\address{
Institute of Mathematics\\
Hebrew University\\
Jerusalem 91904 Israel} \email{alexlub@math.huji.ac.il }


\baselineskip 13pt
\begin{abstract}
Let $\Gamma$ be a group and $r_n(\Gamma)$  the number of its
$n$-dimensional irreducible complex representations.  We define
and study the associated representation zeta function
$\calz_\Gamma(s) = \suml^\infty_{n=1} r_n(\Gamma)n^{-s}$.  When
$\Gamma$ is an arithmetic group satisfying the congruence subgroup
property then $\calz_\Gamma(s)$ has an ``Euler factorization". The
``factor at infinity" is sometimes called
the ``Witten zeta function" counting the rational representations of
an algebraic group. For these we determine precisely the abscissa
of convergence. The local factor at a finite place  counts the
finite representations of suitable open subgroups $U$ of the
associated simple group $G$ over the associated local field $K$.
Here we show a surprising dichotomy:  if $G(K)$ is compact (i.e.
$G$ anisotropic over $K$) the abscissa of convergence goes to
0 when $\dim G$ goes to infinity, but for isotropic groups it
is bounded away from $0$.
As a consequence, there is an unconditional positive
lower bound for the abscissa for arbitrary finitely generated linear groups.
We end with some observations
and conjectures regarding the global abscissa.
\end{abstract}
\maketitle

\section{Introduction}

Let $\Gamma$ be a finitely generated group and let $s_n(\Gamma)$
denote the number of its  subgroups of index at most $n$.  The
behavior of the sequence $\{s_n (\Gamma)\}^\infty_{n = 1}$ and its
relation to the algebraic structure of $\Gamma$  has been the
focus of intensive research over the last two decades under the
rubric ``Subgroup Growth"---see \cite{LS} and the references
therein.

Counting subgroups is essentially the same as counting permutation
representations.  In this paper we take a wider perspective:  we
count linear representations. So, let $r_n(\Gamma)$ be the number
of $n$-dimensional irreducible complex representations of
$\Gamma$.  This number is not necessarily finite, in general (see
\S 4  below) but we consider only groups $\Gamma$ for which this is
the case.  In particular, it is so for the interesting family of irreducible
lattices in higher-rank semisimple groups which will be our main
cases of interest.  By Margulis' arithmeticity theorem \cite[p.~2]{Ma},
any such $\Gamma$ is commensurable to $\bg(\calo_S)$ where
$\bg$ is an $\calo_S$-subgroup scheme of $\GL_d$ with absolutely
almost simple generic fiber.  Here $k$ is a global field, $\calo$
its ring of integers, $S$ a finite subset of $V$, the set of
valuations of $k$, containing $V_\infty$, the set of
archimedean valuations, and $\calo_S$ the ring of $S$-integers.

The (finite dimensional complex) representation theory of $\Gamma$
is captured by the group $A(\Gamma)$, the proalgebraic completion
of $\Gamma$.  In \S 2, we present some background and basic
results on $A(\Gamma)$.  If $\Gamma = \bg(\calo_S)$ as before and
if in addition $\Gamma$ satisfies the congruence subgroup property
(CSP, for short), i.e.
$$C (\Gamma):= \ker\big(\widehat{\bg(\calo_S)} \to \bg(\hat\calo_S)\big)$$
is finite, then $A(\Gamma)$ can be described quite precisely:
\begin{proposition}  Let $\Gamma = \bg(\calo_S)$ as before and assume
$\Gamma$ has the congruence subgroup property. Then $A(\Gamma)$
has a finite normal subgroup $C$ isomorphic to $C(\Gamma) = \ker
(\widehat{\bg(\calo_S}) \to \bg(\hat{\calo_S}))$ such that
$$
A(\Gamma)/C \cong \bg(\bbc)^r\times \prod\limits_{v \in
V_f\backslash S} \bg(\calo_\nu) $$ where $r$ is the number of
archimedean valuations of $k, V_f = V\backslash V_\infty$, and
$\calo_v$ is the completion of $\calo$ with respect to a finite
valuation $\nu$.
\end{proposition}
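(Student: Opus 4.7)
The plan is to exhibit a surjection $\pi\colon A(\Gamma) \twoheadrightarrow H$, where $H := \bg(\bbc)^r \times \prod_{v\in V_f\setminus S} \bg(\calo_v)$, whose kernel is a finite normal subgroup canonically isomorphic to $C(\Gamma)$. First I would construct $\pi$: the $r$ archimedean embeddings of $k$ give a homomorphism $\Gamma \to \bg(\bbc)^r$, and the congruence map gives $\Gamma \to \bg(\hat\calo_S) = \prod_v \bg(\calo_v)$; together these define $\iota\colon \Gamma \to H$. Each factor of $H$ is proalgebraic over $\bbc$ (the archimedean factors are affine algebraic; the factors $\bg(\calo_v)$ are compact profinite, hence pro-object of finite groups, whose complex representations are all finite-dimensional), so $H$ itself is proalgebraic. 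The universal property of $A(\Gamma)$ then produces the extension $\pi$. The image $\iota(\Gamma)$ is dense in $H$: Zariski-dense in $\bg(\bbc)^r$ by Borel density, and dense in $\bg(\hat\calo_S)$ by strong approximation combined with CSP. Hence $\pi$ is surjective.

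The heart of the argument is to identify $\ker\pi$. The profinite completion $\hat\Gamma$ is itself a pro-object of $A(\Gamma)$, and by CSP it fits into the exact sequence
$$1 \to C(\Gamma) \to \hat\Gamma \to \bg(\hat\calo_S) \to 1.$$
The image $C$ of $C(\Gamma)$ in $A(\Gamma)$ is therefore a finite normal subgroup which is killed by $\pi$. To show that $C = \ker\pi$, it suffices to show that every irreducible finite-dimensional complex representation $\rho$ of $\Gamma$ factors through $H$ modulo $C$. Here I would invoke Margulis super-rigidity (applicable because CSP forces higher rank in the cases of interest): up to twist by a finite-image representation, every $\rho$ is the restriction of a rational representation of $\bg(\bbc)^r$ via the archimedean embeddings. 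The algebraic part factors through $\bg(\bbc)^r$; the finite-image twist factors through $\hat\Gamma$, and modulo $C(\Gamma)$ through $\bg(\hat\calo_S)$. Hence $\rho$ descends to $A(\Gamma)/C$.

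The main obstacle lies in this last step: one must assemble a clean decomposition of every representation into an ``archimedean algebraic'' piece and a ``congruence'' piece, and check that no exotic joint deformations survive. Super-rigidity, together with CSP, delivers exactly such a decomposition, but some care is needed to verify that the finite-image twists contribute only characters pulled back from $C(\Gamma)$ and nothing further. Once this is in hand, $\pi$ descends to the desired isomorphism $A(\Gamma)/C \cong H$, with $C$ identified with $C(\Gamma)$ via its embedding through $\hat\Gamma \subset A(\Gamma)$.
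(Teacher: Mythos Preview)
Your overall strategy matches the paper's: both routes ultimately rest on Margulis super-rigidity to control $A(\Gamma)^\circ$ and on the CSP exact sequence for $\hat\Gamma$. The paper, however, organizes the argument by first proving (Theorem~\ref{Proalg-completion}) the direct product decomposition
\[
A(\Gamma)\;\cong\;\bg(\bbc)^{\#S_\infty}\times\hat\Gamma,
\]
and only then dividing $\hat\Gamma$ by $C(\Gamma)$. Your write-up skips this splitting, and that creates two genuine gaps.

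First, when you speak of ``the image $C$ of $C(\Gamma)$ in $A(\Gamma)$,'' you are implicitly using a section $\hat\Gamma\hookrightarrow A(\Gamma)$ that you have not produced. A priori $\hat\Gamma$ is only a \emph{quotient} of $A(\Gamma)$ (via $A(\Gamma)\to A(\Gamma)/A(\Gamma)^\circ$), so $C(\Gamma)\subset\hat\Gamma$ does not automatically give a subgroup of $A(\Gamma)$. The paper obtains the section precisely from the surjection $A(\Gamma)\twoheadrightarrow \bg(\bbc)^r$ you construct, \emph{after} knowing that $A(\Gamma)^\circ=\bg(\bbc)^r$; without that identification your kernel $\ker\pi$ could contain a large piece of $A(\Gamma)^\circ$ and need not be finite at all.

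Second, your formulation of super-rigidity as ``up to twist by a finite-image representation, every $\rho$ is the restriction of a rational representation of $\bg(\bbc)^r$'' is not correct. What Margulis gives is that $\rho$ agrees \emph{on a finite-index subgroup} with the restriction of a rational representation; this is not the same as differing by a tensor with a finite-image character, and in general the discrepancy cannot be described that way. The paper's phrasing is exactly the finite-index-extension statement, from which one deduces $A(\Gamma)^\circ\cong\bg(\bbc)^r$ and then the product decomposition above. Once that is in place, every irreducible of $A(\Gamma)$ is a tensor product of an irreducible of $\bg(\bbc)^r$ with one of $\hat\Gamma$ (not a ``twist''), and $\ker\pi=\{1\}\times C(\Gamma)$ drops out immediately.

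In short: your map $\pi$ and your surjectivity argument are fine, but to identify $\ker\pi$ you must first pin down $A(\Gamma)^\circ$ and split it off, as the paper does; attempting to bypass that step is where your sketch goes wrong.
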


\bigskip

Note that $A(\Gamma)$ is a direct product of its identity
component $\bg(\bbc)^r$ and $\hat \Gamma$, the profinite
completion of $\Gamma$.  Moreover, $\Gamma $ is embedded in
$\bg(\bbc)^r$ via the diagonal map: $\Gamma = \bg(\calo_S)
\to\prod\limits_{v\in V_\infty} \bg(k_v) \le \bg(\bbc)^r$.

Implicit in the Proposition is the fact that the CSP implies
super-rigidity: If $\rho $ is a finite dimensional complex
representation of $\Gamma$ then it can be extended on some finite
index subgroup to a rational  representation of $\bg(\bbc)^r$.

Recall now that Serre's conjecture \cite{Se} asserts that if $G$ is simply connected
and $\suml_{\nu\in S} \rk_{k_v} (\G)\ge 2$ then $\Gamma$ has the CSP.
In most cases this has been proved (see \cite[\S9.5]{PR}
and the references therein).  Moreover, in \cite{LuMr} it is shown that
if $\Gamma$ has the CSP then $r_n(\Gamma)$ is polynomially bounded
 when $n\to\infty$.  (It is further shown that if $\char (k) = 0$
 this property is equivalent to the CSP and it is conjectured that the same is true
 in general).  Let us now define:
 \begin{definition}
The representation-zeta function of $\Gamma$ is defined to be
$$\calz_\Gamma(s) = \suml^\infty_{n=1} r_n(\Gamma)n^{-s}$$

Its abscissa of convergence is:
$$
\rho(\Gamma) =\mathop{\lim\sup}\limits_{n\to\infty} \; \;
\frac{\log  R_n(\Gamma)}{\log n}
$$
where $ R_n(\Gamma) =\suml^n_{i = 1} r_i(\Gamma)$, the number of
irreducible representations of degree at most $n$.

Our main goal in this paper is to initiate the study of
representation zeta functions of arithmetic groups $\Gamma$, in
analogy with the theory of subgroup zeta functions  of nilpotent
groups (cf. \cite{DG} and \cite[Chapters 15 and 16]{LS}).

\medskip

So, if $\Gamma$ has the CSP then $\rho(\Gamma) < \infty$. The
study of $\rho(\Gamma)$ will be one of our main goals. This makes
sense for any finitely generated group.  If $R_n(\Gamma)$ is not
polynomially bounded (in particular, if $R_n(\Gamma)$ is infinite for some
$n$) we simply write $\rho(\Gamma) = \infty$.
\end{definition}

Assume for simplicity now that $\Gamma$ has the CSP and the congruence
kernel $C(\Gamma)$ is trivial.  Proposition 1.1  implies now the
important ``Euler factorization" of $\calz_\Gamma (s)$.

\begin{proposition} If $\Gamma =\G (\calo_S)$, $\Gamma$ has the CSP and
$C(\Gamma) = \{ e \}$ then
$$
\calz_\Gamma (s) = \left(\calz_{\bg(\bbc)} (s)\right)^r \times
\prod\limits_{v \in V_f\backslash S} \calz_{\bg(\calo_v)} (s)
$$
\end{proposition}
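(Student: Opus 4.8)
The strategy is to convert the count of irreducible representations of $\Gamma$ into a count of irreducible rational representations of $A(\Gamma)$, and then to read the factorization off the product decomposition of Proposition 1.1. First I would invoke, from the discussion of $A(\Gamma)$ in \S 2, its defining property: pullback along $\Gamma\to A(\Gamma)$ gives an equivalence between the category of finite-dimensional rational representations of $A(\Gamma)$ and the category of finite-dimensional complex representations of $\Gamma$, and this equivalence preserves dimension, direct sums and irreducibility. Since $C(\Gamma)=\{e\}$, Proposition 1.1 identifies $A(\Gamma)$ with the direct product $G_\infty\times G_f$, where $G_\infty=\bg(\bbc)^r$ and $G_f=\prod_{v\in V_f\setminus S}\bg(\calo_v)=\hat\Gamma$ is profinite. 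Thus $r_n(\Gamma)$ equals the number of isomorphism classes of $n$-dimensional irreducible rational representations of $G_\infty\times G_f$, where ``rational'' means algebraic on the first factor and continuous on the profinite factor; the CSP guarantees this number is finite, so the Dirichlet series below all converge on a common right half-plane and may be multiplied there.

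Next I would use the standard fact that for groups $H_1,H_2$ every finite-dimensional irreducible complex representation $\pi$ of $H_1\times H_2$ is an external tensor product $\rho_1\boxtimes\rho_2$ of irreducibles $\rho_i$ of $H_i$, uniquely so up to isomorphism. Irreducibility of $\rho_1\boxtimes\rho_2$ follows from $\Hom_{H_1\times H_2}(\rho_1\boxtimes\rho_2,\rho_1\boxtimes\rho_2)=\Hom_{H_1}(\rho_1,\rho_1)\otimes\Hom_{H_2}(\rho_2,\rho_2)=\bbc$, using that over $\bbc$ finite-dimensional irreducibles are absolutely irreducible (Schur); conversely one restricts $\pi$ to $H_1$, picks an irreducible summand $\rho_1$, sets $\rho_2=\Hom_{H_1}(\rho_1,\pi)$ with its natural $H_2$-action, and checks that evaluation $\rho_1\boxtimes\rho_2\to\pi$ is a nonzero morphism of irreducibles, hence an isomorphism. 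The same statement holds for rational representations of the pro-algebraic group $A(\Gamma)=G_\infty\times G_f$ (e.g. by transporting it through the equivalence of the first step, noting that on the profinite factor ``rational'' means ``continuous''). It follows that $n\mapsto r_n(\Gamma)$ is the Dirichlet convolution of the analogous counting functions for $G_\infty$ and $G_f$, i.e.
$$\calz_\Gamma(s)=\calz_{\bg(\bbc)^r}(s)\cdot\calz_{G_f}(s).$$

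It remains to factor the two pieces. Applying the previous paragraph $r-1$ times to the $r$-fold product $\bg(\bbc)^r$ gives $\calz_{\bg(\bbc)^r}(s)=\calz_{\bg(\bbc)}(s)^r$. For $G_f$ I would argue that a continuous finite-dimensional complex representation of a profinite group has finite image (a profinite subgroup of $\GL_n(\bbc)$ is finite, as $\GL_n(\bbc)$ has no small subgroups), hence open kernel $N$; in the product topology $N$ contains $\prod_{v\notin T}\bg(\calo_v)$ for some finite $T\subset V_f\setminus S$, so the representation factors through $\prod_{v\in T}\bg(\calo_v)$ and, by the tensor decomposition, equals $\boxtimes_{v\in T}\sigma_v$ with each $\sigma_v$ an irreducible continuous representation of $\bg(\calo_v)$, almost all trivial. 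Since the trivial representation contributes the term $1$ to each local factor $\calz_{\bg(\calo_v)}(s)$, this produces the restricted Euler product $\calz_{G_f}(s)=\prod_{v\in V_f\setminus S}\calz_{\bg(\calo_v)}(s)$, and combining the three identities yields the proposition.

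The routine parts are the two product decompositions and the topological argument for $G_f$; the one point that deserves care is the transfer of the ``irreducibles of a product are tensor products'' statement through $A(\Gamma)$ — that is, that Schur's lemma and absolute irreducibility remain available for the infinite, non-discrete groups $G_\infty$ and $G_f$ in the rational/continuous categories — and it is precisely here that the hypothesis $C(\Gamma)=\{e\}$ enters, since otherwise $A(\Gamma)$ would be a central extension and the count would have to be organized by the central character on $C$ rather than splitting as a clean product.
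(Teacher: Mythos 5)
Your proof is correct and follows the same route as the paper's: translate to rational representations of $A(\Gamma)$, use Proposition~1.1 with $C(\Gamma)=\{e\}$ to get the direct product decomposition $A(\Gamma)=\bg(\bbc)^r\times\prod_{v\in V_f\setminus S}\bg(\calo_v)$, and then use the unique external tensor decomposition of irreducibles of a product; you simply spell out the tensor-product lemma and the topological argument for the profinite factor in more detail than the paper, which records both in a single sentence. One small inaccuracy in your closing paragraph: the hypothesis $C(\Gamma)=\{e\}$ enters when applying Proposition~1.1 to conclude that $A(\Gamma)$ is literally a direct product rather than a central extension of one by $C$; it plays no role in the availability of Schur's lemma or absolute irreducibility in the rational/continuous categories, which hold in any case (over $\bbc$, rational irreducibles of algebraic groups and continuous finite-dimensional irreducibles of profinite groups are absolutely irreducible regardless of the ambient group structure).
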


Of course, here we are using  the notation $\calz_H(s)$ for
groups $H$ which are not discrete.
When $H$ is a profinite group (resp. the group of real or complex points of an algebraic group),
we count only continuous (resp. rational) representations.

\bigskip

A concrete example to think about is $\Gamma = \SL_3(\bbz)$ for
which
$$
\calz_{\SL_3(\bbz)} (s) = \calz_{\SL_3(\bbc)} (s) \times
\prod\limits_p \calz_{\SL_3(\bbz_p)}(s).
$$

So, we have an Euler factorization with $p$-adic factors as well
as a factor at infinity.  We note here that the $p$th local factor
is not quite a power series in $p^{-s}$, i.e., it does not count
the irreducible representations of $p$-power degrees, but this is
not too far from the truth as $\SL_3(\bbz_p)$ is a virtually
pro-$p$ group (see \S 4 and \S 6).  Anyway, we can define
$\rho_\infty (\Gamma)$ to be the abscissa of convergence of the
identity component of $A(\Gamma)$ i.e. of $\bg(\bbc)^r$.  But as
$\calz_{\bg(\bbc)^r} (s) = (\calz_{\bg(\bbc)}(s))^r$ this is equal
to $\rho(\bg(\bbc))$. The factor of infinite $\calz_{\bg(\bbc)}
(s)$, the so-called ``Witten zeta function" is discussed in \S5
below.

Similarly for every $v \in V_f$ we have $\rho_v (\Gamma) =
\rho(\bg(\calo_v))$, the $v$-local abscissa of convergence.

\pre{Theorem \ref{Archimedean}}
{For $\bg$ as before,
\[
\rho\left(\bg(\bbc)\right) \, = \, \frac r\kappa
\]
where $r = \rk \bg = $(absolute) rank of $\bg$ and $\kappa =
|\Phi^+|$ where $\Phi^+$ is the set of the positive roots in the
absolute root system associated to $\bg$.}

Note that $\kappa = |\Phi^+| = \half (\dim\bg - \rk \bg)$ and
$\frac{r}{\kappa} = \frac 2h$ where $h$ is the Coxeter number of
$\Phi$.

The expression $\frac r\kappa$ has already appeared in an
analogous context in the work of Liebeck and Shalev:
\begin{theorem}[Liebeck-Shalev \cite{LiSh2}] Let $\bg$
be a Chevalley group scheme over $\bbz$.  Then
$$\lim\mathop{\sup}\limits_{n, q \to \infty} \; \frac{\log
r_n\left(\bg(\bbf_q)\right)}{\log n} \, = \, \frac r\kappa
$$
\end{theorem}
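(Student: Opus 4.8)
The plan is to use Lusztig's parametrization of $\mathrm{Irr}(\mathbf{G}(\mathbb{F}_q))$ by rational Lusztig series $\mathcal{E}(\mathbf{G}(\mathbb{F}_q),s)$, indexed by the semisimple conjugacy classes $s$ of the dual group $\mathbf{G}^*(\mathbb{F}_q)$ (again a split group with root system dual to $\Phi$, so of the same rank $r$ and the same $\kappa$), together with the degree formula: each $\chi\in\mathcal{E}(\mathbf{G}(\mathbb{F}_q),s)$ has degree $[\mathbf{G}^*(\mathbb{F}_q):C_{\mathbf{G}^*}(s)(\mathbb{F}_q)]_{p'}\cdot\psi(1)$ for a unipotent character $\psi$ of $C_{\mathbf{G}^*}(s)(\mathbb{F}_q)$. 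Two structural inputs drive the estimate. (a) The number of unipotent characters of a connected reductive group depends only on its root datum, not on $q$; so each series contributes at most $M=M(\mathbf{G})$ characters. (b) Writing $\Psi\subseteq\Phi$ for the root system of $H:=C_{\mathbf{G}^*}(s)^\circ$ (a subsystem of full rank $r$, with semisimple rank $\ell_\Psi:=\rk\Psi$), one has $\log_q[\mathbf{G}^*(\mathbb{F}_q):H(\mathbb{F}_q)]_{p'}=\kappa-\kappa_\Psi+o(1)$ with $\kappa_\Psi:=|\Psi^+|$, while the number of semisimple classes $s$ whose centralizer has type $\Psi$ is $O(q^{\,r-\ell_\Psi})$, since such $s$ lie in the $(r-\ell_\Psi)$-dimensional connected centre of $H$.

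For the bound $\limsup\ge r/\kappa$ I would produce one explicit family. Let $T$ be a split maximal torus; all but a fraction $O(1/q)$ of the $(q-1)^r$ characters $\theta$ of $T(\mathbb{F}_q)$ are in general position, and for such $\theta$, $\pm R_T^\theta$ is irreducible of degree exactly $d:=|\mathbf{G}(\mathbb{F}_q)|_{p'}/(q-1)^r$, where $\log_q d=\kappa+o(1)$. Since $R_T^\theta$ determines $\theta$ only up to the $W$-action, this yields at least $c\,q^r$ distinct irreducible characters, all of the single degree $d$; thus $r_d(\mathbf{G}(\mathbb{F}_q))\ge c\,q^r$, and $\log r_d(\mathbf{G}(\mathbb{F}_q))/\log d\to r/\kappa$ as $q\to\infty$, along the pairs $(d(q),q)$.

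For $\limsup\le r/\kappa$, put $n=q^t$ (arbitrary $n$ follows from monotonicity of $R_n$). By (a) and (b),
\[
r_n(\mathbf{G}(\mathbb{F}_q))\ \le\ M\sum_{\Psi:\ \kappa-\kappa_\Psi\le t}O\!\left(q^{\,r-\ell_\Psi}\right),
\]
the sum over the finitely many conjugacy classes of subsystems $\Psi$ occurring as a connected centralizer. Hence $\log_q r_n(\mathbf{G}(\mathbb{F}_q))\le\max\{\,r-\ell_\Psi:\ \kappa-\kappa_\Psi\le t\,\}+o(1)$, and it suffices to prove $r-\ell_\Psi\le(r/\kappa)(\kappa-\kappa_\Psi)$ for every such $\Psi$, equivalently $\kappa_\Psi/\ell_\Psi\le\kappa/r$, i.e.\ $h(\Psi)\le h(\Phi)$ for the (rank-weighted) Coxeter numbers. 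This last point I would settle cleanly: choosing a simple system of $\Psi$ inside $\Phi^+$, the highest root of each irreducible component of $\Psi$ is a positive root of $\Phi$ whose $\Phi$-height is at least its $\Psi$-height and at most $h(\Phi)-1$; so each component has Coxeter number $\le h(\Phi)$, and hence so does their rank-weighted average $h(\Psi)$.

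The step I expect to be the main obstacle is not any individual estimate but making input (b) uniform in $q$: one must bound, independently of $q$, the number of $\mathbf{G}^*(\mathbb{F}_q)$-classes of semisimple elements with centralizer of each type $\Psi$, tracking disconnectedness of centralizers, the several $\mathbb{F}_q$-forms of a given $\Psi$, and the fact that $[\mathbf{G}^*(\mathbb{F}_q):H(\mathbb{F}_q)]_{p'}$ is a product of cyclotomic-type factors whose leading term is $q^{\kappa-\kappa_\Psi}$ only up to lower-order corrections — harmless for the $\limsup$ but needing care to state precisely. Granting this bookkeeping and the Coxeter-number inequality above, the lower and upper estimates meet exactly at $r/\kappa$.
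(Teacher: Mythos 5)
The paper does not prove this theorem: it is quoted from Liebeck--Shalev \cite{LiSh2}, so there is no internal proof to compare against. Your sketch, however, is essentially correct and is the natural Deligne--Lusztig argument. For the lower bound, the $W$-orbits of general-position characters $\theta$ of a split maximal torus give $\asymp q^{r}/|W|$ irreducible characters of common degree $\prod_i(q^{d_i}-1)/(q-1)^r = q^{\kappa+o(1)}$, so the ratio tends to $r/\kappa$ as $q\to\infty$. For the upper bound, the Jordan decomposition of characters plus the two counts you isolate --- each rational Lusztig series contributes a bounded number of characters, and the number of series whose generic degree has $q$-exponent $\kappa-\kappa_\Psi$ is $O(q^{r-\ell_\Psi})$ --- reduce everything to the inequality $r-\ell_\Psi \le (r/\kappa)(\kappa-\kappa_\Psi)$ for every subsystem $\Psi\subseteq\Phi$, i.e.\ $\kappa_\Psi/\ell_\Psi \le \kappa/r$.

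That inequality is precisely (\ref{stability}) in the paper's proof of Theorem~\ref{Archimedean}; your sketch and the paper's proof of the Archimedean zeta function are structural twins, both governed by the monotonicity $h(\Psi)\le h(\Phi)$ of (rank-weighted) Coxeter numbers under inclusion of subsystems. The paper establishes this ``by inspection'' of irreducible pairs, offers a ``conceptual'' alternative via exponents and orders of Chevalley groups, and then handles reducible $\Psi$ by the mediant inequality $\frac{a+c}{b+d}\ge\min\{\frac{a}{b},\frac{c}{d}\}$. Your proof --- choose a simple system of $\Psi$ inside $\Phi^+$; the highest root of each irreducible component of $\Psi$ then has $\Psi$-height at most its $\Phi$-height, which is at most $h(\Phi)-1$ --- is a cleaner, self-contained alternative, and the rank-weighted average handles reducible $\Psi$ immediately. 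The bookkeeping you flag (finitely many $\mathbb{F}_q$-forms of pseudo-Levi centralizers, bounded component groups, bounded number of unipotent characters per twisted type, cyclotomic lower-order terms in $[\mathbf{G}^*:H]_{p'}$) is standard and only affects constants, not the exponent, so the sketch does compile into a correct proof.
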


\bigskip

For $\bg (\calo_v)$ as above, we prove:

\pre{Proposition \ref{ssimp-bound}} {$\rho\left(\bg(\calo_v)\right) \ge \frac r \kappa$.}

In the anisotropic case in characteristic zero, we can prove equality.

\pre{Theorem \ref{SL1}}
{If $\bg(K) = \SL_1 (D)$ where $D$ is a division algebra of degree
$d$ over a local field $K$ of characteristic $0$, then $\bg(K)$ is
compact virtually pro-$p$ group and
$$\rho\left(\bg(K)\right) = \frac r\kappa = \frac 2 d.$$}

Jaikin-Zapirain \cite{Ja2} computed the $v$-adic local zeta function of
$\SL_2(\calo_v)$.  From his result one sees that $\rho = 1 = \frac
r \kappa$ for all such groups.

All these examples suggested to us that
$\rho\left(\bg(\calo_v)\right)$ would always be equal to $\frac r\kappa$.
The truth, however, is quite different:
\pre{Theorem \ref{Isotropic}}
{If $K$
is a non-archimedean local field, $\bg$ an isotropic simple
$K$-group, and $U$ an open compact subgroup of $\bg(K)$, then
$\rho(U) \ge \frac 1{15}$.}

We remark that $\frac 1{15}$ is probably not the best possible
constant.  It is dictated by the fact that for $E_8$ (and for other
exceptional groups with smaller Coxeter number),
we do not know how to improve on the bound of Proposition~\ref{ssimp-bound}.
We note also that for such non-archimedean local fields $K$,
the only anisotropic groups are those of the
type $\bg(K) = \SL_1(D)$ described in Theorem~\ref{SL1}.  For these,
$\frac r\kappa$ goes to zero when $\dim D$ goes to
infinity.  So Theorems~\ref{SL1} and \ref{Isotropic} give a dichotomy between
isotropic and anisotropic groups.  The latter case we understand well; we can estimate the
number of representations of given degree by counting coadjoint orbits.  In the former case,
there is a distinction between $\bg(K)$-orbits and $\bg(\calo_v)$-orbits which appears to be
controlled by the rate of growth of balls in the Bruhat-Tits building of $\bg$ over $K$.
When this rate of growth is high enough, it dominates the estimates of representation growth.
Unfortunately, we still do not know how to compute the precise rates of growth in this case.
(See \S 11  below for more on this point of view, which suggested the computations of \S8 but is not made explicit there.)

An unexpected consequence of Theorem~\ref{Isotropic} is

\pre{Theorem \ref{Linear-bound}}
{If $\Gamma$ be a finitely generated group with some linear representation
$\varphi: \Gamma \to \GL_n(F)$, with $F$ a field, such that
$\varphi(\Gamma)$ is infinite (e.g. $\Gamma$ an infinite linear
group) then $\rho(\Gamma) \ge \frac 1{15}$.}

On the other hand, we show in \S9 that there exist infinite, finitely
generated, residually finite groups $\Gamma$ with $\rho(\Gamma) = 0$.

\medskip

In \S10, we analyze $\rho(\Gamma)$ for arithmetic lattices in
semisimple groups of a very special type, namely, powers of $\SL_2$.
These are very special cases (and, as we saw above, in this problem
special cases can be quite misleading.) We still believe in the conjecture
these examples suggest:

\begin{conjecture}
\label{LatticeComparison}
{\it Let $H$ be a higher-rank semisimple group (i.e.
$H$ is a product  $\prod\limits^\ell_{i = 1} G_i(K_i)$ where each $K_i$
is a local field, each $G_i$ is an absolutely almost simple
$K_i$-group, and we have $\suml^\ell_{i=1} \rk_{K_i}(G_i) \ge
2)$. Then for any two irreducible lattices $\Gamma_1$ and
$\Gamma_2$ in $H$, $\rho(\Gamma_1) = \rho(\Gamma_2)$.}
\end{conjecture}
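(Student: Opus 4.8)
Toward Conjecture~\ref{LatticeComparison} the natural line of attack runs through the Euler factorization. By Margulis' arithmeticity theorem \cite{Ma}, each $\Gamma_j$ ($j=1,2$) is commensurable with an arithmetic group $\bg_j(\calo_{S_j})$, where $\bg_j$ is absolutely almost simple over a global field $k_j$ and $\prod_{v\in S_j}\bg_j(k_{j,v})$ coincides with $H$ up to isogeny and compact factors. A standard Clifford-theory argument shows that $\rho$ is a commensurability invariant: passing between a group and a subgroup of index $m$ alters $R_n$ only by a bounded multiplicative factor and a rescaling of the degree by a factor $\le m$, neither of which affects the exponent. We may thus assume $\Gamma_j=\bg_j(\calo_{S_j})$, and (as predicted in higher rank by Serre's conjecture, and known in most cases) that $\Gamma_j$ has the CSP; the congruence kernel, being finite, contributes only the zeta function of a finite group and so does not change $\rho$. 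The Euler factorization then yields
$$
\rho(\Gamma_j)=\max\bigl(\rho(\bg_j(\bbc)),\,\rho_f(\Gamma_j)\bigr),
\qquad
\rho_f(\Gamma_j):=\rho\Bigl(\prod_{v\in V_f\setminus S_j}\calz_{\bg_j(\calo_v)}(s)\Bigr),
$$
using that for Dirichlet series with non-negative coefficients and constant term $\ge1$ the abscissa of a product equals the maximum of the abscissae, together with the fact noted above that $\rho\bigl((\calz_{\bg_j(\bbc)}(s))^{N}\bigr)=\rho(\bg_j(\bbc))$ for every $N\ge1$.

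\emph{The archimedean part is an invariant of $H$.} Because $\Gamma_j$ is an \emph{irreducible} lattice in the fixed $H$, all almost simple factors of $H$ share a single absolute type, and $\bg_j\otimes\overline{k_j}$ is the corresponding split form; hence the absolute rank $r=\rk\bg_j$ and $\kappa=|\Phi^+|$ depend only on $H$. By Theorem~\ref{Archimedean}, $\rho(\bg_j(\bbc))=r/\kappa$, the same for $j=1,2$. (If the factors of $H$ do not all have the same absolute type, $H$ carries no irreducible lattice and there is nothing to prove.)

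\emph{The finite part is an invariant of $H$.} This is the heart of the matter. For all but finitely many finite places $v$ the group $\bg_j$ has good reduction at $v$, and on a set of density one it is split there. The expected \emph{uniformity} of representation zeta functions --- established for $\SL_2$ in \cite{Ja2} and anticipated in general --- asserts that for such $v$ one has $\calz_{\bg_j(\calo_v)}(s)=W(q_v,q_v^{-s})$ for a single rational function $W$ depending only on the split type of $\bg_j$, hence the \emph{same} $W$ for $j=1,2$. The abscissa of $\prod_v W(q_v,q_v^{-s})$ is then read off from the monomials $q^{a}t^{c}$ in the $t$-expansion of $W(q,t)-1$: it equals $\sup (a+1)/c$, because $\sum_v q_v^{-u}$ converges precisely for $u>1$ over \emph{every} global field (number field or function field). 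This value depends only on the split type, i.e.\ only on $H$. It then remains to see that neither the positive-density family of non-split unramified places nor the finitely many ramified places raises the abscissa: for the former one would show that the universal local factor of a quasi-split outer form never has a larger $\sup(a+1)/c$ than the split one, while at each ramified place $\rho(\bg_j(\calo_v))$ is finite and should stay at or below the value already obtained --- Proposition~\ref{ssimp-bound} and Theorems~\ref{SL1} and \ref{Isotropic} give partial control here, e.g.\ Theorem~\ref{SL1} evaluates it as $2/d$ at an anisotropic place. Granting these points $\rho_f(\Gamma_1)=\rho_f(\Gamma_2)$, and with the previous step this gives $\rho(\Gamma_1)=\rho(\Gamma_2)$.

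\emph{The main obstacle} lies entirely in the finite part: establishing uniformity of $\calz_{\bg(\calo_v)}(s)$ in general, and --- even granting it --- ruling out a contribution from the non-split unramified or the ramified places exceeding the split value (note that $\sup(a+1)/c$ is itself unknown in general, so the statement asserts equality, not a value). In \S10 this program is carried through unconditionally when $H$ is a power of $\SL_2$: $\SL_2$-uniformity \cite{Ja2} supplies $W$; type $A_1$ admits no outer forms, so there are no non-split unramified places; and every ramified place is anisotropic with $\rho(\SL_1(D))=2/d=1$ since $d=2$, which does not exceed the split value --- so all the ingredients are in hand.
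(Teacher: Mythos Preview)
The statement is Conjecture~\ref{LatticeComparison}; the paper does not prove it, and you correctly do not claim to either. What you have written is a strategic outline, and it is the right one: it is precisely the template the paper executes in \S10 for the special case $H=\prod\SL_2(K_i)$ (Theorem~\ref{power-of-SL2}). There the reduction via Corollary~\ref{Commensurable} and Proposition~\ref{Euler}, the archimedean contribution via Theorem~\ref{Archimedean}, the handling of finitely many exceptional local factors via Theorems~\ref{SL2}, \ref{SL1}, \ref{not-char-2}, and the comparison of the remaining Euler product with a power of $\zeta_{k,T}(s-1)$ all follow exactly your script; and indeed the global abscissa $2$ emerges as your $\sup(a+1)/c$ from the $q^{1-s}$ term in Jaikin's formula.

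You have also correctly located the obstruction: the conjectural uniformity $\calz_{\bg(\calo_v)}(s)=W(q_v,q_v^{-s})$ at good split places is known only for $\SL_2$, and even granting it, bounding the non-split and ramified contributions by the split value is open. The paper says as much in \S11, noting that we do not even have a conjectural formula for the common value.

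Two small corrections. First, in your \S10 summary, ``every ramified place is anisotropic'' overstates things: among the finitely many exceptional places there can be isotropic ones (where $\bg$ is split but the integral model is bad). These too have local abscissa $1$, by commensurability with $\SL_2(\calo_v)$ and Theorem~\ref{SL2}; the paper handles them this way. Second, your formula $\rho(\Gamma_j)=\max(\rho(\bg_j(\bbc)),\rho_f(\Gamma_j))$ is correct as stated (a two-factor product), but it is worth stressing---since the $\SL_2$ case already illustrates it---that $\rho_f$ is genuinely the abscissa of the \emph{infinite} product and can exceed every individual $\rho(\bg_j(\calo_v))$; in \S10 each local abscissa is $1$ while $\rho_f=2$.
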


\medskip

This last conjecture should be compared with \cite[Theorem 11]{LuNi}
concerning the growth of $s_n(\Gamma)$, the number of subgroups of
index less than or equal to $n$, in an irreducible lattice of a
higher rank semisimple group:
\begin{theorem}[Lubotzky-Nikolov \cite{LuNi}]
\label{subgroup-growth}
Let $H$ be a higher-rank semisimple group.  Assuming the GRH
(generalized Riemann hypothesis) and Serre's conjecture, for
every irreducible lattice $\Gamma$ in $H$,  the limit
$\lim\limits_{n\to \infty} \; \frac{\log s_n (\Gamma)}{(\log
n)^2/\log\log n}$ exists and equals $\tau (H)$,
an invariant of $H$ which is given explicitly in \cite{LuNi}.
\end{theorem}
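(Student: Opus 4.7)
The plan is to reduce subgroup counting to counting of congruence subgroups, then to carry out a careful Euler-product analysis analogous to (but technically much harder than) the proof of Proposition 1.3 for representation counting. By Margulis' arithmeticity theorem, each irreducible lattice $\Gamma$ in $H$ is commensurable with some $\bg(\calo_S)$; since $s_n$ grows super-polynomially and we only track the $(\log n)^2/\log\log n$ asymptotic, passing to a commensurable group does not change the limit (commensurability affects at most a bounded correction in $s_n(\Gamma)$). Invoking Serre's conjecture, $\Gamma$ has the CSP, so by (the subgroup-growth analogue of) Proposition 1.1 the finite-index subgroups of $\Gamma$ are in essentially bijective correspondence with open subgroups of $\hat\Gamma \cong \prod_{v \in V_f \setminus S} \bg(\calo_v)$ (up to the finite congruence kernel). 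In particular
\[
s_n(\Gamma) \;\doteq\; \sum_{\prod_v n_v \le n}\;\prod_{v} a_{n_v}\bigl(\bg(\calo_v)\bigr),
\]
where $a_m(U)$ counts open subgroups of index $m$ in $U$.

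Next I would estimate each local factor. For $v$ with residue characteristic $p$ and residue field of size $q$, one splits an open subgroup of $\bg(\calo_v)$ into its image in $\bg(\bbf_q)$ and its intersection with the first congruence kernel, which is a uniform pro-$p$ $p$-adic analytic group of dimension $\dim\bg$. By Lazard theory, the number of subgroups of $p$-power index $p^k$ in a uniform pro-$p$ group of dimension $d$ is $q^{(d/4)\,k^{2} + O(k)}$; the finite residue contribution is polynomially bounded (with exponent controlled by Liebeck–Shalev estimates for subgroups of $\bg(\bbf_q)$). Combining, $\log a_{p^k}(\bg(\calo_v))$ is quadratic in $k$ with an explicit leading coefficient depending only on $\dim\bg$ and $q$.

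The core step is then an optimization/counting problem: distribute the total ``index budget'' $\log n$ among primes so as to maximize $\sum_v \log a_{n_v}(\bg(\calo_v))$. The quadratic-in-$k$ growth of the pro-$p$ part favours spreading indices across many primes, while each additional prime $p$ contributes only within the range where $p \le n$. A Lagrange-multiplier style computation identifies the optimum as using primes up to roughly $\exp\!\bigl(c(\log n/\log\log n)\bigr)$ at small level, and it yields $\log s_n(\Gamma) \sim \tau(H)(\log n)^2/\log\log n$. The invariant $\tau(H)$ emerges as a function of the root-system data of the simple factors $G_i$ and the local residue characteristics, and turns out to depend only on $H$ as a topological group (not on the particular arithmetic structure chosen), because the relevant Lie-algebraic data of $\bg$ are determined by $H$ while the dependence on residue fields is washed out in the asymptotic limit. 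GRH enters precisely in estimating sums such as $\sum_{p\le x}\log p$ with the error term needed to pin down the constant rather than merely its order of magnitude.

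The main obstacle, and the reason GRH is needed, is obtaining \emph{uniform} asymptotic control of the local factors $a_{n_v}(\bg(\calo_v))$ simultaneously across all primes up to the optimizing range, together with a matching lower bound (constructing enough explicit subgroups) and upper bound (showing one cannot do essentially better). A secondary obstacle is handling the ``bad primes'' in $S$, the finite congruence kernel $C(\Gamma)$, and the non-simply-connected case, all of which contribute only lower-order corrections but require separate bookkeeping. Once these technicalities are dispatched, the identification of $\tau(H)$ with the explicit expression from \cite{LuNi} is a matter of unwinding the optimum of the Lagrangian in terms of the Chevalley data of each $G_i$.
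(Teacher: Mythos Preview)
This theorem is not proved in the present paper. It is quoted from \cite{LuNi} (Lubotzky--Nikolov) as background and motivation for Conjecture~\ref{LatticeComparison}; the paper explicitly refers the reader to \cite{LuNi} for the proof and for the cases established unconditionally. There is therefore no proof here against which to compare your proposal.

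That said, your outline is broadly in the spirit of how such results are obtained: reduce via arithmeticity and the CSP to counting open subgroups of $\prod_v \bg(\calo_v)$, analyze the local subgroup counts using the pro-$p$ structure of congruence kernels, and optimize over the distribution of index across primes, with GRH controlling the relevant prime sums. One caution: your claimed formula $\log a_{p^k}\approx (d/4)k^2$ for uniform pro-$p$ groups of dimension $d$ is not correct in general; the precise quadratic coefficient depends on finer Lie-theoretic data (the structure of the graded Lie algebra), not merely on $\dim\bg$, and getting this constant right is exactly where $\tau(H)$ comes from. If you want to see the actual argument, consult \cite{LuNi} directly.
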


See \cite{LuNi} for further information,
including many cases for which the theorem is
proved unconditionally.

Theorem~\ref{subgroup-growth} says that the subgroup growth (i.e.,
the permutation representation rate of growth) is very similar for
different irreducible lattices in $H$.
Conjecture~\ref{LatticeComparison} makes a similar statement
regarding their finite dimensional complex representations.

There is still a significant difference.  While in \cite{LuNi} a
precise formula is given for $\tau(H)$, so far, we do not
even have a guess what will be the common value predicted by
Conjecture~\ref{LatticeComparison}.  It seems likely that one needs first to understand the
local abscissas of convergence, but even knowing them in full does
not necessarily give the global abscissa.

\medskip

The paper is organized as follows:  in \S2 we describe $A(\Gamma)$,
the proalgebraic completion, and $B(\Gamma)$, the Bohr
compactification, of a higher rank arithmetic
group $\Gamma$.  In \S3 and \S4 we show how the congruence subgroup
property gives the precise structure of $A(\Gamma)$ and out of
this an Euler factorization is deduced for $\calz_\Gamma(s)$.  The
factor at infinity is studied in \S5 where a precise formula is
given for its abscissa of convergence (Theorem~\ref{Archimedean}).  The finite
local factors are studied in \S6 (generalities), \S7 (the anisotropic case---Theorem~\ref{SL1}), and in \S8 (the isotropic case---Theorem~\ref{Isotropic}).  The
applications to discrete groups are derived in \S9.  In \S10, we
give some evidence for Conjecture~\ref{LatticeComparison}.  We end in \S11
with remarks and suggestions for further research.  It seems that
our results reveal only the tip of the iceberg of $\calz_\Gamma
(s)$.

\bigskip

\noindent{\bf Notations and Conventions}

\medskip

In this paper representations always mean complex finite
dimensional representations.

We study representation theory of various discrete groups $\Gamma$
which are always assumed to be finitely generated.

\section{The proalgebraic completion and Bohr compactification of arithmetic groups}

Let $\Gamma$ be a finitely generated group.  A useful tool for
studying the finite dimensional representation theory of $\Gamma$
over $\bbc$ is the proalgebraic completion $A(\Gamma)$ of
$\Gamma$, known also as the Hochschild-Mostow group of $\Gamma$.
(See \cite{HM}, \cite{LuMg} and \cite{BLMM} for a systematic
description.)  The group $A(\Gamma)$ together with the structure
homomorphism
\begin{equation}\tag{2.1}
i : \Gamma\to A(\Gamma)
\end{equation}
is uniquely characterized by the following property: For every
representation $\rho$ of $\Gamma$ there is a unique rational
representation $\bar\rho $ of $A(\Gamma)$ such that $\bar \rho
\circ i = \rho$.

This implies that the representation theory of $\Gamma$ is
equivalent to the rational representation theory of $A(\Gamma)$.
The image $\bar \rho(A(\Gamma))$ is always the Zariski closure of
$\rho(\Gamma)$ and in fact, $A(\Gamma)$ is the inverse limit of
these closures over all representations of $\Gamma$.  In
particular, $A(\Gamma)$ is mapped onto the profinite completion
$\hat\Gamma$ of $\Gamma$ (which can be thought  as the inverse
limit over the representations with finite image).  The kernel
$A(\Gamma)^\circ$ of the exact sequence:
\begin{equation}\tag{2.2}
1 \to A(\Gamma)^\circ \to A(\Gamma) \to \hat \Gamma \to 1
\end{equation}
is the connected component of $A(\Gamma)$.  It is a simply
connected proaffine algebraic group \cite[Theorem 1]{BLMM}

The group $\Gamma$ is called \emph{super-rigid}  if $A(\Gamma)$ is
finite dimensional (i.e., $A(\Gamma)^\circ$ is finite
dimensional).  It is shown in \cite[Theorem 5]{BLMM} that if $\Gamma$
is linear over $\bbc$ and super-rigid then it has a finite index
normal subgroup $\Gamma_0$ such that $A(\Gamma_0)\simeq
A(\Gamma_0)^\circ \times \hat \Gamma_0$.

It can be easily seen that $\Gamma_0$ can be chosen so that
$\Gamma_0 \to A(\Gamma_0)^\circ$ is injective and every
representation of $\Gamma$ can be extended, on a finite index
subgroup $\Gamma_1$ of $\Gamma_0$ (and therefore of $\Gamma)$ to a rational
representation of $A(\Gamma_0)^\circ = A (\Gamma)^\circ$. (Note,
that for a finite dimensional rational representation of
$A(\Gamma_0)$, the image of $\hat \Gamma_0$ is finite).  So,
super-rigidity for a linear group $\Gamma$ implies, and in fact is
equivalent, to the existence of a finite dimensional connected,
simply connected, algebraic group $G$ containing a finite index
subgroup $\Gamma_0$ of $\Gamma$, such that every representation of
$\Gamma$ can be extended to $G$ on some finite index subgroup of
$\Gamma_0$.

As is well known, Margulis' super-rigidity theorem (\cite[p.~2]{Ma}
says that irreducible lattices
$\Gamma$ in higher rank semisimple groups $H$ are super-rigid.
(This has now been supplemented (\cite{Co}, \cite{GS})
for lattices in $\Sp(n, 1),\;n \ge 1$, and $F^{(-20)}_4 $.)
Margulis' arithmeticity theorem \cite[p.~2]{Ma} (which is deduced
from the super-rigidity) says that every such $\Gamma$  is $(S-)$
arithmetic.

Let us now spell out the precise meaning of this regarding
$A(\Gamma)$:

So let $H$ be a semisimple (locally compact) group.  By this we
mean
\begin{equation}\tag{2.3}
H = \prod\limits^\ell_{i = 1} G_i (K_i)
\end{equation}
where each  $K_i$ is a  local field and $G_i$ is an absolutely
almost simple group defined over $K_i$.  We assume that no
$G_i(K_i)$ is compact, i.e., $\rk_{K_i} (G_i) \ge 1$.

If $\suml^\ell_{i=1} \rk_{K_i} (G_i) \ge 2$; or if $\ell = 1$,
$K_1= \bbr$, and $G_1(\bbr)$ is locally isomorphic to one of the
real rank one groups $\Sp(n, 1)$ or $F_4^{\,(-20)}$, then every
irreducible lattice of $H$ is arithmetic.  This means that there
exists a global field $k$, a finite set of valuations $S$ of $k$
containing all the archimedean ones, with $\calo_S = \{ x \in
k\mid v(x) \ge 0\; \; \; \forall v \notin S\}$, and a group
scheme of finite type $\bg/\calo_S$ whose generic fiber is
connected, simply-connected and semisimple, with a continuous map
$\psi:\prod\limits_{v \in S} $ $\bg(k_v) \to H$ whose kernel and
cokernel are compact and such that $\psi\left(\G (\calo_S)\right)$
is commensurable to $\Gamma$. (We note that the scheme can
be chosen to be flat -- see \cite[1.1]{BLR}.)

This in particular implies that if an irreducible lattice in $H$
exists, then all the fields  $K_i$ are of the same characteristic,
and all the algebraic groups $G_i$ are forms of the same group. It
also says that such a lattice $\Gamma$ is isomorphic, up to finite
index, to $\bg(\calo_S)$.

\bigskip

We can now describe the pro-algebraic completion of $\G
(\calo_S)$:
\begin{theorem}
\label{Proalg-completion} With the notation of $\bg(\calo_S)$ as
above (including the assumption $\suml_{v \in S} \rk_{k_v} (\bg)
\ge 2$; or $\ell =1$, $K_1 = \bbr$, and $G_1 (K_1)$ is either $\Sp
(n, 1)$ or $F_4^{(-20)}$) we have
\begin{equation}\tag{2.4}
A(\bg(\calo_S)) = \bg(\bbc)^{\#S_\infty} \times \widehat{\bg
(\calo_S)}
\end{equation}
where $S_\infty$ is the set of archimedean valuations of $k$.
\end{theorem}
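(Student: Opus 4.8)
The plan is to verify directly that the group $\bg(\bbc)^{\#S_\infty}\times\widehat{\bg(\calo_S)}$, equipped with the homomorphism $j=(\iota,c)$, satisfies the universal property (2.1) of $A(\Gamma)$, where $\Gamma=\bg(\calo_S)$, $\iota$ is the diagonal archimedean embedding $\Gamma\to\prod_{v\in S_\infty}\bg(k_v)$ (composed with the complexification), and $c\colon\Gamma\to\widehat{\bg(\calo_S)}$ is the profinite completion map. Since by (2.2) the map $A(\Gamma)\to\hat\Gamma=\widehat{\bg(\calo_S)}$ is always surjective with kernel the connected component $A(\Gamma)^\circ$, this will simultaneously identify $A(\Gamma)^\circ$ with $\bg(\bbc)^{\#S_\infty}$. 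I prefer this route to first invoking \cite[Theorem~5]{BLMM} for a finite-index subgroup $\Gamma_0$ and then upgrading to $\Gamma$ itself, since the latter introduces the awkward question of whether the closure of $\Gamma_0$ in $\hat\Gamma$ is all of $\hat\Gamma_0$.

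\textbf{Step 1 (every representation extends to the proposed group).} The essential input is Margulis super-rigidity \cite{Ma}, supplemented by \cite{Co} and \cite{GS} in the rank-one cases $\Sp(n,1)$ and $F_4^{(-20)}$: given a representation $\rho$ of $\Gamma$, after passing to a normal finite-index subgroup $\Gamma'\trianglelefteq\Gamma$ it extends to a continuous representation of $\prod_{v\in S}\bg(k_v)$. The non-archimedean factors have no small subgroups, so after shrinking $\Gamma'$ they act trivially; each archimedean factor contributes a real-analytic, hence rational, representation of the complexification; so $\rho|_{\Gamma'}$ is the pullback under $\iota$ of a rational representation $\tilde\rho$ of $G:=\bg(\bbc)^{\#S_\infty}$. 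Now run the standard twisting argument: on $W=\Hom_\bbc(\bbc^{\dim\rho},\bbc^{\dim\rho})$ with the $\Gamma$-action $\gamma\cdot A=\rho(\gamma)\,A\,(\tilde\rho\circ\iota)(\gamma)^{-1}$, the identity is fixed by the normal subgroup $\Gamma'$, so the $\Gamma$-subrepresentation $\tau$ it generates factors through the finite group $\Gamma/\Gamma'$, and the evaluation map $\tau\otimes(\tilde\rho\circ\iota)\to\rho$ is a $\Gamma$-equivariant surjection. Hence every representation of $\Gamma$ is a subquotient of a rational representation of $G\times\hat\Gamma$ pulled back along $j$.

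\textbf{Step 2 (density of $j(\Gamma)$ and conclusion).} By strong approximation and the Borel density theorem, $\bg(\calo_S)$ --- and every finite-index subgroup, since a finite-index subgroup of a Zariski-dense subgroup of the connected group $\bg$ is again Zariski-dense --- maps Zariski-densely into $G=\bg(\bbc)^{\#S_\infty}$ under $\iota$, and densely into $\hat\Gamma$ under $c$ by construction. A Goursat-type argument then shows $j(\Gamma)$ is dense in $G\times\hat\Gamma$ in the pro-algebraic topology: a closed pro-algebraic subgroup surjecting onto both factors is the fibre product over a common quotient, and a common quotient of the connected group $G$ and the profinite group $\hat\Gamma$ is trivial. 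Consequently a $\Gamma$-invariant subspace or quotient of a rational representation of $G\times\hat\Gamma$ is automatically $G\times\hat\Gamma$-invariant, so the word "subquotient'' in Step~1 may be replaced by "restriction'': every representation of $\Gamma$ extends to a rational representation of $G\times\hat\Gamma$, and uniqueness of the extension is immediate from density. This verifies the universal property, giving $A(\Gamma)=\bg(\bbc)^{\#S_\infty}\times\widehat{\bg(\calo_S)}$; and since each $\overline{\rho(\Gamma)}$ then has identity component a quotient of $G$, with $G$ itself realized by precomposing $\iota$ with a faithful rational representation, one obtains $A(\Gamma)^\circ=\varprojlim_\rho\overline{\rho(\Gamma)}^{\,\circ}=\bg(\bbc)^{\#S_\infty}$.

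\textbf{Main obstacle.} The real work is Step~1: turning Margulis super-rigidity, as usually stated for lattices in a semisimple group, into the precise assertion that on a finite-index subgroup $\rho$ becomes a rational representation of $\bg(\bbc)^{\#S_\infty}$ pulled back along the \emph{diagonal} archimedean embedding. This means dealing with the $S$-arithmetic (not merely arithmetic) setting, absorbing any anisotropic (compact) archimedean factors, and --- via restriction of scalars $\bg/k\rightsquigarrow\bg$ over $\bbq$ --- checking that the complexification of $\prod_{v\in S_\infty}\bg(k_v)$ is indeed $\bg(\bbc)^{\#S_\infty}$ and that the diagonal image of $\Gamma$ is Zariski-dense there (the "independence of the places'' that makes the power $\#S_\infty$, rather than a single copy of $\bg(\bbc)$, appear). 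Once this normalization is in place, Steps~1--2 are essentially formal.
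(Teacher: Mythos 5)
Your proof is correct and rests on the same essential input as the paper's proof---Margulis super-rigidity, supplemented by Corlette and Gromov--Schoen in the rank-one archimedean cases---but you organize the verification differently. The paper first identifies $A(\Gamma)^\circ$ with $M=\bg(\bbc)^{\#S_\infty}$ (asserting this follows from the ``extends on a finite-index subgroup'' form of super-rigidity), and then splits the always-present exact sequence $1\to A(\Gamma)^\circ\to A(\Gamma)\to\hat\Gamma\to 1$ by observing that the dense embedding $\iota\colon\Gamma\to M$ induces a retraction $A(\Gamma)\twoheadrightarrow M$. You instead verify the universal property directly for the candidate $M\times\hat\Gamma$, and in doing so you make explicit two steps the paper leaves implicit: the twisting (Frobenius-reciprocity) argument exhibiting an arbitrary representation of $\Gamma$ as a quotient of $\tau\otimes(\tilde\rho\circ\iota)$ with $\tau$ inflated from the finite group $\Gamma/\Gamma'$, and a Goursat argument that $j(\Gamma)$ is Zariski-dense in $M\times\hat\Gamma$ (since a common quotient of the connected $M$ and the profinite $\hat\Gamma$ is trivial), which upgrades ``subquotient'' to ``restriction.'' What your route buys is a fully self-contained verification with the induction step spelled out; what the paper's route buys is brevity and a cleaner conceptual statement ($A(\Gamma)^\circ\cong M$) that is reused afterward. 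One small omission: the paper disposes of positive characteristic at the outset via \cite[Theorem 3, p.3]{Ma} (every representation has finite image, $S_\infty=\emptyset$, and the statement collapses to $A(\Gamma)=\hat\Gamma$); your argument does cover this case---the no-small-subgroups step already trivializes the representation on $\Gamma'$ when there are no archimedean factors---but it would be worth saying so explicitly rather than leaving the reader to notice that the twisting step degenerates correctly.
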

\begin{proof}
If $k$ is of positive characteristic then by \cite[Theorem 3, p.3]{Ma},
$A(\bg(\calo_S)) = \widehat{\bg(\calo_S)}$ and we are done. Assume
$\char(k) = 0$ and then by the same theorem, for every complex
representation  of $\Gamma = \bg(\calo_S)$,
the identity  component $\overline{\Gamma}^\circ$ of
the Zariski closure of $\Gamma$) is semisimple. By \cite[Theorem 5, p. 5]{Ma} every such representation  of $\Gamma$, or of a finite index
subgroup thereof, into a simple algebraic $\bbc$-group is obtained
(up to finite index subgroup) by embedding $\calo_S$ into $\bbc$
and then composing with an
algebraic representation of $\bg(\bbc)$.

We can therefore deduce that with $\Gamma$ embedded diagonally in
$M = \prod\limits_{v\in S_\infty} \bg (\bbc)$, every complex
representation of $\Gamma$ can be extended, on a finite index
subgroup of $\Gamma$, to a representation of $M$.  This proves
that $A(\Gamma)^\circ\cong M$.

We have a direct product decomposition $A(\Gamma) = A
(\Gamma)^\circ \times \hat \Gamma$ since $\Gamma$ is indeed
densely embedded in $M = A(\Gamma)^\circ$ and hence there is a map
$A(\Gamma)\twoheadrightarrow A(\Gamma)^\circ$.
\end{proof}

So super-rigidity gives the complete description of
$A(\Gamma)^\circ$. We should now concentrate on $\hat\Gamma =
\widehat{\bg(\calo_S)}$. Here we need the congruence subgroup
property to be discussed in the next section.  We mention here in
passing that super-rigidity also gives the complete description of
the Bohr compactification of $\Gamma$.  Let us first recall:
\begin{definition} For a finitely generated group $\Gamma$ we
denote by $B(\Gamma)$   its Bohr compactification. This is a
compact group  together with a homomorphism $j : \Gamma \to B(
\Gamma)$ with the following universal property: If $\varphi$ is a
homomorphism of $\Gamma$ into some compact group $K$, there exists
a unique continuous extension $\tilde\varphi: B(\Gamma) \to K$
with $\tilde \varphi \circ j = \varphi$.

\bigskip

The existence of such $B(\Gamma)$ (and $j$) is easy to establish:
Let $\{ C_\a, \psi_\a\}$ be the family of all possible
homomorphisms $\psi_\a: \Gamma \to K_\a$ where $K_\a$ is a compact
group.  Take $C=\prod\limits_\a K_\a$,  and then  $B(\Gamma)$ is
the closure of the image of $\Gamma$ in $C$ under the diagonal map
$\gamma \to (\psi_\a(\gamma))_\a$ for $\gamma \in\Gamma$. The Bohr
compactification is of importance in the theory of almost periodic
functions (\cite[Chapter~VII]{Cd}).

\end{definition}
\begin{proposition} Let $\Gamma = \bg(\calo_S)$ be as in Theorem~\ref{Proalg-completion}.
Then
\[
B(\Gamma) = \prod\limits_{\sigma \in T} {}^\sigma\bg(\bbr)
\times\widehat{\bg(\calo_S)}
\]
where $T$ is the set of all real embeddings of $k$ for which
${}^\sigma\bg(\bbr)$ is compact, where ${}^\sigma\bg =
\bg\times_{\sigma} \bbr$.

Note that $T$ can be considered as a subset of $S_\infty$.
\end{proposition}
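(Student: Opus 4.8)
The plan is to pin down $B(\Gamma)$ by identifying its identity component $B(\Gamma)^{\circ}$ and its group of components separately, and then checking that the resulting extension is a direct product. By the Peter--Weyl theorem, $B(\Gamma)=\varprojlim_{\rho}\overline{\rho(\Gamma)}$, the inverse limit of the closures in $U(n)$ over the finite-dimensional unitary representations $\rho$ of $\Gamma$, and $B(\Gamma)$ embeds in $\prod_{\rho}\overline{\rho(\Gamma)}$. Two facts are easy and I would record them first. The component group $\pi_{0}(B(\Gamma))$ is a profinite quotient of $\Gamma$, hence a quotient of $\widehat{\bg(\calo_{S})}$, while $\widehat{\bg(\calo_{S})}$, being profinite, is itself a quotient of $B(\Gamma)$ factoring through $\pi_{0}(B(\Gamma))$; since profinite groups are Hopfian this forces $\pi_{0}(B(\Gamma))=\widehat{\bg(\calo_{S})}$ via the natural map, so $B(\Gamma)\to\widehat{\bg(\calo_{S})}$ has kernel $B(\Gamma)^{\circ}$. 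Setting $L:=\prod_{\sigma\in T}{}^{\sigma}\bg(\bbr)$, this group is compact and, as $\bg$ is simply connected, connected; and the diagonal map $\Gamma\to L$ has dense image (by the Borel density theorem $\Gamma$ is Zariski dense in $\prod_{\sigma}{}^{\sigma}\bg$ over the archimedean $\sigma$, so its projection to the compact direct factor $L$ is a relatively compact Zariski-dense, hence full-dimensional, subgroup of the connected group $L$). Thus $L$ is a connected compact quotient of $B(\Gamma)$, giving a surjection $B(\Gamma)^{\circ}\twoheadrightarrow L$.

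The main step is the reverse inclusion $B(\Gamma)^{\circ}\hookrightarrow L$. I would fix a unitary $\rho\colon\Gamma\to U(n)$, extend it to a rational representation of $A(\Gamma)=M\times\widehat{\bg(\calo_{S})}$ with $M=\bg(\bbc)^{\#S_{\infty}}$ (Theorem~\ref{Proalg-completion}), and note that the restriction to $\widehat{\bg(\calo_{S})}$ has finite image, a compact totally disconnected subgroup of $\GL_{n}(\bbc)$ being finite. Hence on a finite-index subgroup $\Gamma_{1}\le\Gamma$ we have $\rho|_{\Gamma_{1}}=\tilde\tau\circ\delta|_{\Gamma_{1}}$, where $\delta\colon\Gamma\to M$ is the diagonal archimedean embedding and $\tilde\tau$ is a rational representation of $M$. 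Writing $\tilde\tau$ as a sum of external tensor products $\boxtimes_{\sigma}\tau_{\sigma}$ over the archimedean places $\sigma$, the unitarity of $\bigotimes_{\sigma}\tau_{\sigma}(\sigma(\gamma))$ for $\gamma\in\Gamma_{1}$ forces each $\tau_{\sigma}(\sigma(\gamma))$ to be a scalar multiple of a unitary operator, so the projectivization $\bar\tau_{\sigma}\colon{}^{\sigma}\bg(\bbc)\to\mathrm{PGL}(V_{\sigma})$ carries $\sigma(\Gamma_{1})$ into the compact group $\mathrm{PU}(V_{\sigma})$.

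The crucial point is then that $\tau_{\sigma}$ must be trivial whenever $\sigma\notin T$. For archimedean $\sigma\notin T$ the group ${}^{\sigma}\bg(k_{\sigma})$ is non-compact, and $\sigma(\Gamma_{1})$ is not relatively compact in it: $\sigma(\Gamma_{1})$ is either an infinite discrete subgroup (when $\Gamma$ projects to a lattice there, i.e.\ the product of the remaining factors of $\prod_{v\in S}\bg(k_{v})$ is compact) or dense in ${}^{\sigma}\bg(k_{\sigma})$ (otherwise), both by the lattice property of $\Gamma$. If $\tau_{\sigma}$ were nontrivial then, ${}^{\sigma}\bg$ being absolutely almost simple, $\bar\tau_{\sigma}$ would have finite kernel, hence (in characteristic $0$) would map ${}^{\sigma}\bg(k_{\sigma})$ homeomorphically onto a closed non-compact subgroup of $\mathrm{PGL}(V_{\sigma})$, sending the non-relatively-compact set $\sigma(\Gamma_{1})$ to a non-relatively-compact set---contradicting the previous paragraph. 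Hence $\tilde\tau$ factors through the quotient $\prod_{\sigma\in T}{}^{\sigma}\bg(\bbc)$ of $M$, so $\rho|_{\Gamma_{1}}$ factors as $\Gamma_{1}\to L\xrightarrow{\tilde\tau}U(n)$. Consequently the canonical extension $\beta_{\rho}\colon B(\Gamma)\to\overline{\rho(\Gamma)}$ and the composite $B(\Gamma)\twoheadrightarrow L\xrightarrow{\tilde\tau}\overline{\rho(\Gamma)}$ agree on $\Gamma_{1}$, hence on the closure of $\Gamma_{1}$ in $B(\Gamma)$, which is open and thus contains $B(\Gamma)^{\circ}$; so $\beta_{\rho}|_{B(\Gamma)^{\circ}}$ factors through $B(\Gamma)^{\circ}\to L$. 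Since $\bigcap_{\rho}\ker\beta_{\rho}=\{e\}$ in $B(\Gamma)$, the map $B(\Gamma)^{\circ}\to L$ is injective, hence an isomorphism.

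Finally I would assemble the pieces. The diagonal $\Gamma\to L\times\widehat{\bg(\calo_{S})}$ has dense image: its projections to the two factors are dense (as above, and by the definition of the profinite completion), and a proper closed subgroup surjecting onto both would, by Goursat's lemma, exhibit a common nontrivial quotient of $L$ and of $\widehat{\bg(\calo_{S})}$---impossible, the former being connected and the latter totally disconnected. Hence $B(\Gamma)\to L\times\widehat{\bg(\calo_{S})}$ is surjective; its kernel lies in $\ker\bigl(B(\Gamma)\to\widehat{\bg(\calo_{S})}\bigr)=B(\Gamma)^{\circ}$ and in $\ker\bigl(B(\Gamma)^{\circ}\xrightarrow{\ \sim\ }L\bigr)$, so it is trivial. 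This yields $B(\Gamma)=\prod_{\sigma\in T}{}^{\sigma}\bg(\bbr)\times\widehat{\bg(\calo_{S})}$, with $T\subseteq S_{\infty}$ since it consists of (real) archimedean places. I expect the main obstacle to be the non-compactness input in the third paragraph: deciding precisely when $\sigma(\Gamma_{1})$ is infinite discrete and when it is dense requires the standard but somewhat delicate strong-approximation and irreducibility properties of $\bg(\calo_{S})$ as a lattice, together with a careful treatment of the real versus complex archimedean places.
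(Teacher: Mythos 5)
Your proof is correct, and it reaches the same conclusion by a genuinely different route from the paper's. The paper handles the identity component directly: it observes that a compact Lie group has identity component $H(\bbr)$ for a compact semisimple real algebraic $H$ whose almost-simple factors are absolutely almost simple over $\bbr$, and then applies Margulis superrigidity once more (the same \cite[Theorem~5, p.~5]{Ma} used for Theorem~\ref{Proalg-completion}) to homomorphisms $\Gamma\to H(\bbr)$, concluding $B(\Gamma)^{\circ}=L$; the positive-characteristic case is disposed of by finiteness of images, and the direct-product statement is asserted from density of $\Gamma\to L$ with no further detail. You instead bootstrap from the already-proved decomposition $A(\Gamma)\cong\bg(\bbc)^{\#S_\infty}\times\widehat{\bg(\calo_S)}$ and add a unitarity constraint: writing a unitary $\rho$ restricted to a finite-index $\Gamma_1$ as $\tilde\tau\circ\delta$ with $\tilde\tau$ a rational representation of $M$, decomposing into external tensor factors, and using the fact that a tensor product of invertible operators is unitary only if each factor is a scalar times a unitary, you reduce to the observation that $\sigma(\Gamma_1)$ fails to be relatively compact in ${}^{\sigma}\bg(k_\sigma)$ whenever $\sigma\notin T$. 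This buys self-containment — you do not need to re-invoke superrigidity for a new class of target groups — at the cost of the tensor-factor lemma and the relative-compactness analysis. Your treatment of the direct-product step (Goursat plus the impossibility of a common nontrivial quotient that is simultaneously connected and totally disconnected) and of $\pi_0(B(\Gamma))$ via the Hopfian property is also more explicit than the paper's one-line assertion.

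The one place that deserves the care you already flag is the claim that $\sigma(\Gamma_1)$ is not relatively compact for archimedean $\sigma\notin T$. For \emph{real} $\sigma$ there is a shortcut avoiding the lattice dichotomy: $\sigma(\Gamma_1)$ is Zariski dense in ${}^{\sigma}\bg$ over $\bbr$, but a compact subgroup of $\GL_n(\bbr)$ is Zariski closed, so a Zariski-dense relatively compact $\sigma(\Gamma_1)$ would force ${}^{\sigma}\bg(\bbr)$ to be compact. For \emph{complex} $\sigma$ this shortcut fails (the compact real form of $\bg(\bbc)$ is Zariski dense over $\bbc$), and you genuinely need the lattice property: either $\sigma(\Gamma_1)$ is an infinite discrete subgroup of the noncompact $\bg(\bbc)$, or (by irreducibility of $\Gamma$ in $\prod_{v\in S}\bg(k_v)$ and Borel density) the projection is dense. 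So the real and complex archimedean cases really do call for different arguments, as you anticipated; once that distinction is made explicit the proof is complete.
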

\begin{proof}
By the Peter-Weyl theorem every compact group is an inverse limit
of finite dimensional compact Lie groups.  Let $L =
\prod\limits_{\sigma \in T} {}^\sigma\bg ( \bbr)$.
To prove that
$B(\Gamma)^\circ = L$ means proving that if $\psi\colon\Gamma \to K$ is a
homomorphism of $\Gamma$ into a dense subgroup of a
compact Lie group $K$, then $\psi$ can be extended, up to a finite
index subgroup, to a continuous homomorphism from $L$ to $K$.

As $K$ is compact, its identity component is the group of real
points of a real connected algebraic group, $K^\circ = H(\bbr)$.
Again, as in the proof of Theorem~\ref{Proalg-completion}, if
$\char (k) > 0$, then $\psi$ has finite image and $B(\Gamma) =
\hat \Gamma$.  If $\char (k) = 0$, $H$ is semisimple and each one
of its almost simple factors is absolutely almost simple
over $\bbr$ (otherwise,
it would be a restriction of scalars of a complex group and hence
not compact). We can use \cite[Theorem 5, p. 5]{Ma} again to
deduce that the connected component of $B(\Gamma)$ is indeed $L$.
As before, it is a direct factor since we have a dense map from
$\Gamma$ to $L$.
\end{proof}

\section{The congruence subgroup property}

We continue with the notation of the previous section. So $\bg$
is a group scheme of finite type over $\calo_S$,  the ring of $S$-integers
in a global field $k$, whose generic fiber is connected,  simply
connected, and absolutely almost simple, and $\Gamma =
\bg(\calo_S)$.
\begin{definition} The group $\Gamma$ is said to have the
\emph{congruence subgroup property} (CSP for short) if $\ker
(\widehat{\bg(\calo_S)} \mathop{\rightarrow}\limits^\pi
\bg(\hat{\calo_S}))$ is finite.

\bigskip

Now by the strong approximation theorem (cf. \cite[Theorem~7.12]{PR}
and \cite{Pra})
$\pi$ is onto.  Moreover, $\bg(\hat\calo_S) =
\prod\limits_{v\notin S}\bg( \calo_v)$.  Note, that if $\Gamma$
has the CSP then by replacing $\Gamma$ with a suitable finite index
subgroup $\Gamma_0$, we have $\hat\Gamma_0 = \prod\limits_{v\notin
S} L_v$, where $L_v$ is open in $\bg(\calo_v)$ for every $v$ and
equal to it for almost every $v$.

Before continuing, let us recall (see \cite[\S16]{BMS}, \cite[\S2.7]{Se}, and
\cite[Theorem~7.2]{Ra}) that
the CSP implies super-rigidity.  In our language this means
\end{definition}
\begin{theorem}  If $\Gamma = \bg(\calo_S)$ has the CSP then
$A(\Gamma)^\circ$ is finite dimensional.
\end{theorem}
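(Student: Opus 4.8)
The statement is equivalent to super-rigidity of $\Gamma$: since $A(\Gamma)$ is the inverse limit of the Zariski closures $\overline{\rho(\Gamma)}$ over all finite-dimensional complex representations $\rho$, finite-dimensionality of $A(\Gamma)^\circ$ is the same as a uniform bound on $\dim\overline{\rho(\Gamma)}^\circ$. So the plan is to show that every $\rho\colon\Gamma\to\GL_n(\bbc)$ is, on a suitable finite-index subgroup of $\Gamma$, a direct sum of representations each obtained by embedding $\calo_S$ into $\bbc$ and composing with a rational representation of $\bg(\bbc)$ --- exactly as in the proof of Theorem~\ref{Proalg-completion} --- so that $\dim\overline{\rho(\Gamma)}\le(\#S_\infty)\dim\bg$ and we are done. (If $\char k>0$ there is nothing to prove: by \cite{Ma} every complex representation of $\bg(\calo_S)$ has finite image and $A(\Gamma)^\circ$ is trivial. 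Assume henceforth $\char k=0$.)

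First I would make the usual reductions. Replacing $\Gamma$ by a finite-index subgroup --- still $S$-arithmetic and still satisfying the CSP --- we may assume $\bh:=\overline{\rho(\Gamma)}$ is connected, and by \cite{Ma} (the same input used in the proof of Theorem~\ref{Proalg-completion}) $\bh$ is semisimple. A standard specialization argument then allows us to conjugate $\rho$ so that it takes values in $\bh(E)$ for a number field $E$ over which $\bh$ is defined, and, $\Gamma$ being finitely generated, $\rho(\Gamma)\subseteq\bh(\calo_E[1/N])$ for some integer $N$. It remains to pin down $\rho$, and this is where the CSP enters.

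The key step: for each finite place $w$ of $E$ not dividing $N$, compose $\rho$ with the inclusion $\bh(\calo_E[1/N])\hookrightarrow\bh(\calo_{E,w})$; since $\bh(\calo_{E,w})$ is profinite this yields, by the universal property of $\widehat{\bg(\calo_S)}$, a continuous homomorphism $\widehat{\bg(\calo_S)}\to\bh(\calo_{E,w})$. Now invoke the CSP together with strong approximation (\S3): $\widehat{\bg(\calo_S)}$ is, modulo a finite normal subgroup, the product $\prod_{v\notin S}\bg(\calo_v)$. Let $p$ be the residue characteristic of $w$. Because $\bh(\calo_{E,w})$ has a neighbourhood basis of the identity consisting of torsion-free pro-$p$ groups, while $\bg(\calo_v)$ for a place $v$ of residue characteristic $\ell\neq p$ is virtually pro-$\ell$, the homomorphism has finite image on the open subgroup obtained by taking pro-$\ell$ congruence subgroups at all $v\nmid p$; hence, after passing to an open subgroup, it factors through the finite product $\prod_{v\mid p}\bg(\calo_v)$. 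The rigidity of $p$-adic analytic groups --- continuous homomorphisms are analytic, and homomorphisms between $p$-adic points of semisimple groups come from morphisms of algebraic groups over the completions (Borel--Tits, Pink) --- then identifies each factor $\bg(\calo_v)\to\bh(\calo_{E,w})$ with the completion-level points of an algebraic morphism. Running over all such $w$ and matching up with the finitely many archimedean places, these local identifications are forced to be mutually compatible, and they pin $\rho$ globally to the asserted shape.

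I expect the main obstacle to be exactly this last circle of ideas: extracting an honest morphism of algebraic groups from a bare continuous homomorphism $\bg(\calo_v)\to\bh(\calo_{E,w})$, and then assembling the local data at all places --- finite and archimedean --- into a single global algebraic representation. This is the technical heart of the implication ``CSP $\Rightarrow$ super-rigidity'' due to Bass--Milnor--Serre, Serre, and Raghunathan, and in a polished write-up the cleanest course is simply to cite \cite{BMS}, \cite{Se}, \cite{Ra} as indicated just before the statement.
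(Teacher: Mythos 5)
Your proof shares the key mechanism with the paper's sketch --- use the CSP to write $\hat\Gamma_0 \cong \prod_{v\notin S}L_v$, kill off the factors of the wrong residue characteristic because they are virtually pro-$\ell$ with $\ell \neq p$, and then argue that the resulting map between $p$-adic analytic groups must be algebraic --- but the surrounding reductions are different from the paper's, and two of them are on shaky ground.

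First, your appeal to Margulis to conclude that $\bh=\overline{\rho(\Gamma)}^\circ$ is semisimple imports a hypothesis the theorem does not have. Margulis' results used in the proof of Theorem~\ref{Proalg-completion} require higher rank (or $\Sp(n,1)$, $F_4^{(-20)}$), but the present theorem assumes only that $\Gamma=\bg(\calo_S)$ has the CSP, with no rank hypothesis, and the entire point of ``CSP implies super-rigidity'' as a self-contained implication (Bass--Milnor--Serre, Serre, Raghunathan) is that it does not rest on Margulis. The paper's sketch in fact sidesteps semisimplicity of $\bh$ entirely: it works directly with $\rho:\Gamma\to\GL_n(\bbz_p)$ and only uses that $\bg$ (the source) is semisimple when upgrading the continuous $p$-adic homomorphism to an algebraic one.

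Second, your ``standard specialization argument'' that conjugates $\rho$ so it lands in $\bh(E)$ for a number field $E$ is not available at that stage: absent rigidity, the character variety of a homomorphism into a semisimple group can be positive-dimensional, and then no such conjugation exists. The paper takes the opposite order: it first treats all number-field representations (exactly by the $p$-adic argument you describe), then proves finiteness of the set of irreducible $n$-dimensional $\overline\bbq$-representations using FAb plus Jordan's theorem, and only then invokes the Nullstellensatz to conclude the character variety is finite so that every $\bbc$-representation is conjugate to a $\overline\bbq$-one. Finiteness must come before the descent, not after; doing the specialization up front assumes what you want to prove.

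Once you are in the number-field / $p$-adic setting, your argument matches the paper's. Your self-assessment of the ``assembly'' step as the technical heart is accurate, and the paper also leaves that at the level of a sketch with references to \cite{BMS}, \cite{Se}, \cite{Ra}.
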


\medskip

\noindent{\bf Sketch of proof:}  First consider a representation
$\rho: \Gamma \to \GL_n(\bbq)$.  Unless $\Gamma$ is a lattice in a
rank one group over a positive characteristic field, in which case
$\Gamma$ does not have the CSP (see \cite[Theorem~D]{Lu2}), $\Gamma$ is
finitely generated and hence the entries of $\rho(\Gamma)$ are
$p$-adic integers for almost every prime $p$.  Choose such a prime
$p$  (which is not $\char (k)$). Thus we have a
representation into $\GL_n(\bbz_p)$.  This last group has a finite
index torsion-free pro-$p$ subgroup $H$.  Now, if $\Gamma$ has
CSP, then after passing to a finite index subgroup $\Gamma_0$ of
$\Gamma$, $\hat\Gamma_0 = \prod\limits_{v \notin S} L_v$ where
$L_v$ is open in $\bg(\calo_v)$.  If $\char (k) = \ell > 0$ then
$L_v$ is a virtually pro-$\ell$ group and so its image in $H$ is
finite and hence trivial.  This proves that $\rho(\Gamma)$ was
finite to start with.  If $\char (k) = 0$ then for every $v $
which does not lie over $p$, $\rho(L_v)$ is finite and again
trivial.  So we get a map from $\prod\limits_{v|p} L_v$ to
$\GL_n(\bbz_p)$. This is a map between two $p$-adic analytic
virtually pro-$p$ groups, which must be analytic and in fact
algebraic as $\bg$ is semisimple.  Thus altogether, $\rho$ can be
extended, on a finite index subgroup, to an algebraic
representation of $\bg$.

The above proof works word for word also for representations over
number fields and hence also with regard to  representations into
$\GL_n( \overline{\bbq})$, where $\overline \bbq$ is an algebraic
closure of $\bbq$.  This implies in particular that $\Gamma$ has
only finitely many irreducible $n$-dimensional $\overline
\bbq$-representations.  Indeed, if $\Gamma$ has the CSP then it has
FAb, i.e., $|\Delta/(\Delta, \Delta]| < \infty$ for every finite
index subgroup $\Delta$ of $\Gamma$.  It follows now from Jordan's
Theorem (cf. \cite[p.~376]{LS}; see also \cite[Cor.~8]{BLMM})
that $\Gamma$ has only finitely many
$n$-dimensional representations with finite image.  The same
applies also to algebraic representations of $\bg$. By the
Nullstellensatz the same applies to representations over $\bbc$.
So the character variety is finite (see [LuMg]) and all the
representations can be conjugated into $\GL_n(\overline
\bbq)$.\hfill \qed

\medskip

Note also that if $\Gamma$ has the CSP then by replacing
$\Gamma$ by a suitable finite index $\Gamma_0$ as before,
$\hat\Gamma_0 = \prod\limits_v L_v$, and combining this with
the proof of Theorem~\ref{Proalg-completion}
above we get:
\begin{theorem} If $\Gamma = \bg (\calo_S)$ has the CSP then for a
suitable finite index subgroup $\Gamma_0$ of $\Gamma$ (with
$\Gamma_0 = \Gamma$ if $\ker (\widehat{\bg(\calo_S)} \to
\bg(\hat\calo_S)) = \{ e \})$
\[
A(\Gamma_0) = \G(\bbc)^{\# S_\infty} \times \prod\limits_{v \notin
S} L_v
\]
where $L_v$ is open in $\G (\calo_v)$ and equal to it for almost
all $v$.
\end{theorem}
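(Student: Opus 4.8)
The plan is to combine the structural description of $A(\Gamma)$ coming from super-rigidity with the description of $\hat\Gamma=\widehat{\bg(\calo_S)}$ supplied by strong approximation, and then to pass to a suitable finite-index subgroup in order to absorb the (finite) congruence kernel. First I would record that since $\Gamma$ has the CSP it is super-rigid (the theorem above, that $A(\Gamma)^\circ$ is finite dimensional when $\Gamma$ has the CSP), and then the argument proving Theorem~\ref{Proalg-completion} applies verbatim---with super-rigidity supplied by that theorem rather than by Margulis---to give $A(\Gamma)=\bg(\bbc)^{\#S_\infty}\times\hat\Gamma$ with $\Gamma$ diagonally and densely embedded in the connected factor. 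The same applies to any finite-index subgroup $\Gamma_0\le\Gamma$: such a $\Gamma_0$ is commensurable with $\bg(\calo_S)$, hence inherits the CSP and is super-rigid and Zariski-dense in $\bg(\bbc)^{\#S_\infty}$, so $A(\Gamma_0)=\bg(\bbc)^{\#S_\infty}\times\hat\Gamma_0$; moreover $\hat\Gamma_0$ is canonically the closure $\overline{\Gamma_0}$ of $\Gamma_0$ inside $\hat\Gamma$, since any finite-index subgroup of $\Gamma_0$ contains a finite-index subgroup normal in $\Gamma$ and so the profinite topology of $\Gamma_0$ is induced from that of $\Gamma$. Thus the theorem reduces to finding a finite-index $\Gamma_0\le\Gamma$ whose closure in $\hat\Gamma$ has the form $\prod_{v\notin S}L_v$ with $L_v$ open in $\bg(\calo_v)$ and equal to it for almost all $v$, with $\Gamma_0=\Gamma$ when the congruence kernel vanishes.

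Next I would invoke strong approximation (\cite[Theorem~7.12]{PR}): the natural map $\pi\colon\hat\Gamma\to\bg(\hat\calo_S)=\prod_{v\notin S}\bg(\calo_v)$ is a continuous surjection of profinite groups with kernel $C=C(\Gamma)$, which is finite by the CSP hypothesis, and $\pi$ is therefore open. If $C=\{e\}$ we are done with $\Gamma_0=\Gamma$ and $L_v=\bg(\calo_v)$. In general, since $C$ is finite and $\hat\Gamma$ is profinite, I choose for each of the finitely many $c\ne e$ in $C$ an open subgroup of $\hat\Gamma$ avoiding $c$ and intersect them, obtaining an open subgroup $N\le\hat\Gamma$ with $N\cap C=\{e\}$, so that $\pi|_N$ is injective. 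The image $\pi(N)$ is open in $\prod_{v\notin S}\bg(\calo_v)$, so it contains a basic open neighbourhood of the identity $\prod_{v\in F}W_v\times\prod_{v\notin F}\bg(\calo_v)$ for some finite $F$ and open $W_v$; shrinking each $W_v$ to an open subgroup $U_v\le\bg(\calo_v)$ inside it (each $\bg(\calo_v)$ being profinite), we get an open subgroup of product type $M:=\prod_{v\in F}U_v\times\prod_{v\notin F}\bg(\calo_v)\subseteq\pi(N)$. Then $\widetilde M:=\pi^{-1}(M)\cap N$ is open in $\hat\Gamma$, and $\pi$ restricts to an isomorphism $\widetilde M\to M$---injective because $\widetilde M\cap C\subseteq N\cap C=\{e\}$, surjective because $M\subseteq\pi(N)$---so $\widetilde M\cong M=\prod_{v\notin S}L_v$ with $L_v=U_v$ for $v\in F$ and $L_v=\bg(\calo_v)$ for $v\notin F$, which is open in $\bg(\calo_v)$ and equal to it for almost all $v$.

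Finally I would take $\Gamma_0$ to be the preimage of $\widetilde M$ under $\Gamma\hookrightarrow\hat\Gamma$, a finite-index subgroup of $\Gamma$; since $\Gamma$ is dense in $\hat\Gamma$ and $\widetilde M$ is open, the closure of $\Gamma_0$ in $\hat\Gamma$ equals $\widetilde M$, hence $\hat\Gamma_0=\widetilde M\cong\prod_{v\notin S}L_v$, and combining with the first paragraph yields $A(\Gamma_0)=\bg(\bbc)^{\#S_\infty}\times\prod_{v\notin S}L_v$; when $C=\{e\}$ the construction returns $\widetilde M=\hat\Gamma$ and $\Gamma_0=\Gamma$. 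I do not expect any serious obstacle here: all the substance is contained in the already-available implication ``CSP $\Rightarrow$ super-rigidity'' (whose proof is the delicate sketched argument above) and in strong approximation. The two points demanding care are that passing to a finite-index subgroup preserves the connected factor $\bg(\bbc)^{\#S_\infty}$ and is compatible with profinite completion (so that $\hat\Gamma_0=\overline{\Gamma_0}$), and that the open subgroup one extracts from $\pi(N)$ can be chosen of genuine product form; both are handled as above.
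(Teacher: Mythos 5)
Your proof is correct and follows exactly the route the paper intends: it is precisely the combination of the proof of Theorem~\ref{Proalg-completion} (applied to $\Gamma_0$, with super-rigidity supplied by ``CSP $\Rightarrow$ super-rigidity'') with the observation, made just after the definition of CSP, that strong approximation plus finiteness of the congruence kernel let one pass to a finite-index $\Gamma_0$ with $\hat\Gamma_0\cong\prod_{v\notin S}L_v$. Your write-up simply fills in the details (choosing $N$ trivial on $C$, extracting a product-form open subgroup, and checking $\hat\Gamma_0=\overline{\Gamma_0}$) that the paper leaves implicit.
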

Finally, we mention the main result of \cite{LuMr}:

\begin{theorem}[Lubotzky-Martin \cite{LuMr}]
\label{Lubotzky-Martin}
If $\Gamma = \G  (\calo_S)$ has the CSP then
$r_n(\Gamma)$ is polynomially bounded.  If $\char (k) = 0$ then
the converse is also true.
\end{theorem}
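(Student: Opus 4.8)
The plan is to prove the two implications separately: CSP $\Rightarrow$ polynomial representation growth (PRG) unconditionally, via the Euler factorisation of $\calz_\Gamma$; and, in characteristic $0$, PRG $\Rightarrow$ CSP by contraposition, using the structure of the congruence kernel.

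\textbf{CSP implies PRG.} Replacing $\Gamma$ by a finite index subgroup $\Gamma_0$ changes $r_n$ only up to bounded multiplicative factors and a bounded rescaling of the degree (Clifford theory, together with the fact that inducing an irreducible of $\Gamma_0$ up to $\Gamma$ produces an irreducible of $\Gamma$ of comparable degree lying over it); so PRG for $\Gamma$ is equivalent to PRG for $\Gamma_0$, and by the structure theorem for $A(\Gamma_0)$ established above we may assume $A(\Gamma_0)=\bg(\bbc)^{\#S_\infty}\times\prod_{v\notin S}L_v$ with $L_v$ open in $\bg(\calo_v)$ and equal to it for almost all $v$. Since the representation theory of $\Gamma_0$ is the rational representation theory of $A(\Gamma_0)$, and an irreducible rational representation of a direct product is an external tensor product $\sigma\boxtimes\tau$ of irreducibles of the factors (the profinite factor $\prod_v L_v$ necessarily acting through a finite quotient), dimensions multiply and $\calz_{\Gamma_0}(s)=\calz_{\bg(\bbc)}(s)^{\#S_\infty}\cdot\prod_{v\notin S}\calz_{L_v}(s)$. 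It then suffices to show each factor converges for real $s$ large. For $\calz_{\bg(\bbc)}(s)$ this is exactly the finiteness of $\rho(\bg(\bbc))=r/\kappa$ from Theorem~\ref{Archimedean}: by the Weyl dimension formula the dominant weights $\lambda$ with $\dim V_\lambda\le n$ are the lattice points of a dilate of a fixed rational polytope, so their number is polynomial in $n$. Each $L_v$ is a compact $p_v$-adic analytic group which is FAb (because $\bg$ is simply connected semisimple and FAb passes to finite index subgroups), hence has PRG, i.e.\ $\calz_{L_v}(s)$ converges for $s$ large. Finally, the infinite product $\prod_{v\notin S}\calz_{L_v}(s)$ converges once one has a uniform estimate of the shape $\calz_{\bg(\calo_v)}(s)=1+O(q_v^{-\delta})$, valid for all $v$ as soon as $s$ exceeds a fixed constant; this follows from Liebeck--Shalev--type upper bounds on $r_n(\bg(\bbf_q))$ that are uniform in $q$ (cf.\ the Liebeck--Shalev theorem quoted in the introduction), together with control of the contributions of the deeper congruence quotients $\bg(\calo_v/\m_v^{k})$. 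Multiplying the three convergence statements shows $\calz_{\Gamma_0}(s)$, hence $\calz_\Gamma(s)$, converges for $s$ large, i.e.\ $r_n(\Gamma)$ is polynomially bounded.

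\textbf{PRG implies CSP ($\char k=0$).} By contraposition, suppose $\Gamma=\bg(\calo_S)$ does not have the CSP; we show some $r_n(\Gamma)=\infty$. If some finite index subgroup of $\Gamma$ has infinite abelianisation it already has infinitely many $1$-dimensional representations, so we may assume $\Gamma$ is FAb; then, in characteristic $0$, the failure of CSP forces the congruence kernel $C(\Gamma)$ to be infinite. The key input is the structure theory of infinite congruence kernels (Lubotzky and others): an infinite $C(\Gamma)$ is not topologically finitely generated, and consequently a suitable finite index subgroup $\Gamma_0$ of $\Gamma$ has a profinite completion admitting a continuous surjection onto $\prod_{i=1}^{\infty}H$ for some fixed finite nonabelian group $H$ (for instance one may use that $C(\Gamma)$ is, or surjects onto, a free profinite group of countable rank, which surjects onto $\prod_{i=1}^{\infty}A_5$). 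Picking an irreducible representation $\sigma$ of $H$ of some dimension $d\ge 2$, the representations of $\prod_i H$ got by composing $\sigma$ with the $i$-th projection are pairwise non-isomorphic of dimension $d$, whence $r_d(\Gamma_0)=\infty$; inducing up, some $r_n(\Gamma)$ with $n\le[\Gamma:\Gamma_0]\,d$ is infinite, so $\Gamma$ does not have PRG.

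\textbf{Main obstacle.} The crux of the forward direction is the uniform estimate $\calz_{\bg(\calo_v)}(s)=1+O(q_v^{-\delta})$ that makes the Euler product converge: this is where the genuine counting lies, as it needs bounds on the number of low-dimensional irreducibles of $\bg(\bbf_q)$ (and of the higher congruence quotients) that are uniform in $q$. For the converse, the essential ingredient is the structural fact that an infinite congruence kernel is large enough to produce infinitely many irreducible representations of some fixed dimension.
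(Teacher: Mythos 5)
The paper itself offers no proof of this theorem: it is attributed to and cited from \cite{LuMr}, so there is no ``paper's own proof'' to compare your attempt against. I therefore review your argument on its own merits.

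\textbf{Forward direction.} The overall structure (reduce to a finite-index subgroup $\Gamma_0$, use the decomposition $A(\Gamma_0)=\bg(\bbc)^{\#S_\infty}\times\prod_v L_v$ and the resulting Euler factorisation, then prove each factor and the infinite product converge for $\Re(s)$ large) is sound. Two points need attention. First, a small but real slip: you assert that inducing an irreducible of $\Gamma_0$ up to $\Gamma$ produces an irreducible of $\Gamma$. Induction from a finite-index subgroup does not preserve irreducibility in general; the correct bookkeeping is exactly Proposition~\ref{Res-Ind} of the paper ($R_n(\Gamma_0)\le m R_{mn}(\Gamma)$ and $R_n(\Gamma)\le m R_n(\Gamma_0)$), which yields $\rho(\Gamma_0)=\rho(\Gamma)$ without any irreducibility claim for induced modules. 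Second, and more seriously, the convergence of the infinite product $\prod_{v\notin S}\calz_{L_v}(s)$ is the entire content of the theorem, and you assert rather than prove it. Invoking ``Liebeck--Shalev--type uniform bounds'' gives no control over the representations of $\bg(\calo_v)$ that do not factor through $\bg(\bbf_{q_v})$, i.e.\ over the deeper congruence levels, and it is precisely those that require the work. What is needed is (i) a lower bound on the minimal nontrivial degree $m(L_v)\ge cq_v^\delta$ uniform in $v$, so that for fixed $n$ only $O(\log n/\log q_v)$ places can contribute nontrivially, and (ii) a bound, uniform in $v$, on $r_d(L_v)$ of the form $d^{O(1)}$. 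Neither appears in your argument, and the phrase ``together with control of the contributions of the deeper congruence quotients'' is a placeholder for the actual estimate, which is where the theorem is proved.

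\textbf{Converse direction.} Your reduction to the FAb case is fine (if some finite-index subgroup has infinite abelianisation, then $r_1$ of that subgroup is infinite and you are done). The gap is the passage from ``$C(\Gamma)$ is infinite'' to ``a finite-index subgroup of $\Gamma$ has a profinite completion surjecting onto $\prod_{i=1}^\infty H$ for a fixed finite nonabelian simple $H$.'' This does not follow merely from $C(\Gamma)$ not being topologically finitely generated: a profinite group can fail to be finitely generated without mapping onto an infinite power of a fixed finite group (consider $\prod_p\bbz/p\bbz$). What you actually need is the deep structural result that, for $S$-arithmetic groups in characteristic zero with infinite congruence kernel, $C(\Gamma)$ (or a suitable closed subgroup) maps onto the free profinite group of countable rank, or at least onto $\prod_{i=1}^\infty H$ for some fixed nonabelian $H$. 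This is a substantial theorem (Lubotzky, Melnikov; cf.\ \cite{Lu2}), not a soft consequence of non-finite-generation, and it must be cited and its applicability in the generality needed must be justified. As it stands, the crucial inference in the contrapositive is missing.

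In short: the architecture of both directions is plausible and close in spirit to \cite{LuMr}, but in each direction the single hard estimate or structural input that actually makes the argument go is asserted without proof or justification.
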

It is conjectured in [LuMr] that the converse also holds if
$\char(k) > 0$ and some steps in this direction are taken there.

\section{The representation zeta function}

Let $\Gamma$ be a finitely generated group and $r_n (\Gamma)$  the
number of its $n$-dimensional irreducible complex representations.
This may not be a finite number.  Similarly, denote by $\hat
r_n(\Gamma)$ the number of $n$-dimensional irreducible
representations of $\Gamma$ with finite image.

\begin{proposition} (\cite[Proposition~2]{BLMM}) We have $\hat r_n(\Gamma) < \infty$  for
every $n$ if and only if  $\Gamma$ has (FAb), i.e.
$|\Delta/[\Delta, \Delta]| < \infty$ for every finite index
subgroup $\Delta$ of $\Gamma$.
\end{proposition}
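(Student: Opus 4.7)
The plan is to establish the two implications separately, using Jordan's theorem together with finite generation of $\Gamma$ for the forward direction, and induction of characters for the reverse direction.

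For the direction (FAb) $\Rightarrow \hat r_n(\Gamma) < \infty$, let $\rho\colon \Gamma \to \GL_n(\bbc)$ be an irreducible representation with finite image $G$. By Jordan's theorem there is a normal abelian subgroup $A \triangleleft G$ of index at most a constant $J(n)$ depending only on $n$. Set $\Gamma' := \rho^{-1}(A)$; it is a normal subgroup of $\Gamma$ of index at most $J(n)$. Since $\Gamma$ is finitely generated, there are only finitely many such $\Gamma'$. For each fixed $\Gamma'$, the restriction $\rho|_{\Gamma'}$ has abelian image and hence factors through $\Gamma'/[\Gamma',\Gamma']$, which by (FAb) is finite; so $|A|$ is bounded by $|\Gamma'/[\Gamma',\Gamma']|$, and therefore $|G| \le J(n)\cdot |\Gamma'/[\Gamma',\Gamma']|$ is bounded in terms of $n$ and $\Gamma$ alone. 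Finite generation again implies that $\Gamma$ has only finitely many normal subgroups of any bounded index, hence only finitely many finite quotients of any given order; each such finite quotient has only finitely many $n$-dimensional irreducible representations, so $\hat r_n(\Gamma) < \infty$.

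For the converse, assume that (FAb) fails: some finite-index subgroup $\Delta \le \Gamma$, which we can take to be normal of index $m$, has infinite abelianization $\Delta/[\Delta,\Delta]$. Then $\Delta$ admits infinitely many one-dimensional characters of finite image, obtained either from the torsion of $\Delta/[\Delta,\Delta]$ when it is infinite, or by composing a surjection $\Delta/[\Delta,\Delta] \to \bbz$ with the maps $\bbz \to \bbz/k\bbz \hookrightarrow \bbc^\times$ for varying $k$. For each such character $\chi$, the induced representation $\mathrm{Ind}_\Delta^\Gamma \chi$ has dimension $m$ and finite image, and so decomposes into irreducibles of dimension at most $m$, all with finite image. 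Frobenius reciprocity gives that an irreducible $\rho$ of $\Gamma$ of dimension $\le m$ appears in $\mathrm{Ind}_\Delta^\Gamma\chi$ with multiplicity equal to the multiplicity of $\chi$ in $\rho|_\Delta$, which is at most $m$. Thus the map sending $\chi$ to any irreducible constituent of $\mathrm{Ind}_\Delta^\Gamma\chi$ has finite fibres, forcing $\hat r_n(\Gamma) = \infty$ for some $n \le m$.

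The main point is to isolate the right finite piece of data determining $\rho$ in the forward direction; the tight interplay of three ingredients makes the count finite: Jordan's theorem uniformly bounds the index of the abelian part, (FAb) uniformly bounds the size of the abelian part once the finite-index subgroup is fixed, and finite generation of $\Gamma$ bounds the number of finite-index subgroups one must consider. Remove any one of the three and the argument collapses. The reverse direction is comparatively easy, since an infinite abelianization automatically supplies infinitely many finite-image characters and induction packages them into $\Gamma$-representations of bounded dimension.
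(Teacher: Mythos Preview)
Your argument is correct in both directions. Note, however, that the paper does not supply its own proof of this proposition: it is quoted as \cite[Proposition~2]{BLMM} and simply cited. So there is no in-paper proof to compare against. That said, the paper does indicate later (in the sketch of proof of Theorem~3.2) that the forward implication goes via Jordan's theorem together with (FAb), exactly as you do, so your approach is the intended one.

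One small point of presentation in the reverse direction: the phrase ``the map sending $\chi$ to any irreducible constituent of $\mathrm{Ind}_\Delta^\Gamma\chi$ has finite fibres'' is a little loose, since this is not a well-defined map. What you actually use (and what your Frobenius reciprocity computation establishes) is that for each fixed irreducible $\rho$ of $\Gamma$ with $\dim\rho\le m$, there are at most $\dim\rho\le m$ characters $\chi$ of $\Delta$ for which $\rho$ occurs in $\mathrm{Ind}_\Delta^\Gamma\chi$; since there are infinitely many $\chi$, there must be infinitely many such $\rho$. It would be cleaner to phrase it that way.
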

On the other hand there is no known intrinsic characterization of
groups $\Gamma$ for which $r_n(\Gamma) < \infty$ for every $n$.
Such a group is called \emph{rigid}.
\begin{problem} Characterize rigid groups.
\end{problem}

Anyway, we  assume from now on  that $\Gamma$ is rigid and define:
\begin{definition} (a) \ The representation zeta function of $\Gamma$ is
$$\calz_\Gamma (s) = \suml^\infty_{n=1} r_n (\Gamma) n^{-s},$$
and the finite-representation zeta  function is
$$\hat\calz_\Gamma (s)= \suml^\infty_{n=1} \hat r_n (\Gamma) n^{-s}.$$

\noindent (b) \ Let $\rho(\Gamma) = \overline{\lim} \frac{\log
R_n(\Gamma)}{\log n}$ where $R_n(\Gamma) = \suml^n_{i=1}
r_i(\Gamma)$. It is called the \emph{abscissa of convergence} of
$\calz_\Gamma (s)$.
\end{definition}

\medskip

The following easy result is given in \cite[Lemma 2.2]{LuMr}:
\begin{proposition}
\label{Res-Ind}
If $\Gamma_0$ is a subgroup of index $m$ in $\Gamma$ then
\begin{align*}&R_n(\Gamma_0) \le m R_{mn} (\Gamma)\\
\mathrm{and} \; \; &R_n(\Gamma) \le m R_n(\Gamma_0)
\end{align*}
\end{proposition}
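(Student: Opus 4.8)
The plan is to derive both inequalities from the standard relation between irreducible representations of $\Gamma$ and of a finite-index subgroup $\Gamma_0$, via restriction and induction together with Frobenius reciprocity. The key numerical facts are: (i) if $\sigma$ is an irreducible representation of $\Gamma$ of degree $n$, then its restriction $\sigma|_{\Gamma_0}$ has degree $n$ and hence decomposes into at most $n$ irreducible constituents, each of degree at most $n$; and (ii) if $\tau$ is an irreducible representation of $\Gamma_0$ of degree $n$, then $\mathrm{Ind}_{\Gamma_0}^\Gamma \tau$ has degree $mn$, so all its irreducible constituents have degree at most $mn$, and by Frobenius reciprocity every irreducible $\sigma$ of $\Gamma$ with $\langle \sigma|_{\Gamma_0},\tau\rangle \neq 0$ occurs in $\mathrm{Ind}_{\Gamma_0}^\Gamma \tau$.

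For the first inequality, $R_n(\Gamma_0) \le m\,R_{mn}(\Gamma)$, I would argue as follows. Fix an irreducible $\tau$ of $\Gamma_0$ with $\deg\tau \le n$. By Clifford theory $\tau$ is a constituent of $\sigma|_{\Gamma_0}$ for some irreducible $\sigma$ of $\Gamma$; since $\sigma$ embeds in $\mathrm{Ind}_{\Gamma_0}^\Gamma(\sigma|_{\Gamma_0})$ and $\tau \mid \sigma|_{\Gamma_0}$, we get $\sigma \mid \mathrm{Ind}_{\Gamma_0}^\Gamma \tau$, whence $\deg\sigma \le m\deg\tau \le mn$, i.e. $\sigma$ is counted in $R_{mn}(\Gamma)$. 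Now I bound, for each such $\sigma$, the number of irreducible $\tau$ of $\Gamma_0$ that can arise this way: they are all constituents of $\sigma|_{\Gamma_0}$, which has degree $\deg\sigma \le mn$; but more crudely, the number of distinct irreducible constituents of $\sigma|_{\Gamma_0}$ is at most $m$ — this is the Clifford-theoretic fact that the constituents of $\sigma|_{\Gamma_0}$ form a single $\Gamma$-orbit of size dividing $[\Gamma:\Gamma_0]=m$. Summing over the at most $R_{mn}(\Gamma)$ possibilities for $\sigma$ and the at most $m$ possibilities for $\tau$ given $\sigma$ yields $R_n(\Gamma_0) \le m\,R_{mn}(\Gamma)$.

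For the second inequality, $R_n(\Gamma) \le m\,R_n(\Gamma_0)$, fix an irreducible $\sigma$ of $\Gamma$ with $\deg\sigma \le n$. Pick any irreducible constituent $\tau$ of $\sigma|_{\Gamma_0}$; then $\deg\tau \le \deg\sigma \le n$, so $\tau$ is counted in $R_n(\Gamma_0)$, and $\sigma \mid \mathrm{Ind}_{\Gamma_0}^\Gamma\tau$ by Frobenius reciprocity. For a fixed $\tau$, the induced representation $\mathrm{Ind}_{\Gamma_0}^\Gamma\tau$ has degree $m\deg\tau$ and each of its irreducible constituents $\sigma$ satisfies $\deg\sigma \ge \deg\tau$ (since $\tau$ is a constituent of $\sigma|_{\Gamma_0}$), so it has at most $m$ distinct irreducible constituents; hence each $\tau$ "accounts for" at most $m$ representations $\sigma$ of $\Gamma$. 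Since every $\sigma$ with $\deg\sigma\le n$ arises from some $\tau$ with $\deg\tau\le n$, we get $R_n(\Gamma) \le m\,R_n(\Gamma_0)$.

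The routine parts are the degree bookkeeping and the appeals to Frobenius reciprocity; the one point that requires genuine care — and which I view as the main obstacle to a fully rigorous write-up — is the Clifford-theoretic bound that the irreducible constituents of a restriction (respectively, of an induction) number at most $m=[\Gamma:\Gamma_0]$. For the restriction this is Clifford's theorem: $\sigma|_{\Gamma_0}$ is a sum of $\Gamma$-conjugates of a single irreducible $\tau$, and the stabilizer of $\tau$ in $\Gamma$ contains $\Gamma_0$, so the orbit has size at most $m$. For the induction the bound $\le m$ on the number of distinct constituents follows from the degree inequality $\deg\sigma \ge \deg\tau$ together with $\deg(\mathrm{Ind}\,\tau) = m\deg\tau$, counting with multiplicity. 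One should double-check that the argument does not secretly require $\Gamma_0$ normal; it does not, since the only place normality enters Clifford's theorem is in identifying the conjugates, and the orbit-size bound $\le [\Gamma:\Gamma_0]$ holds regardless (the stabilizer of $\tau$ contains $\Gamma_0$ in all cases). With that checked, both inequalities follow as above.
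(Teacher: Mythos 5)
The paper itself gives no proof, only a citation to \cite[Lemma 2.2]{LuMr}, so there is nothing to compare against directly. Your strategy (restriction/induction plus Frobenius reciprocity, with the number of distinct constituents bounded by $m$ in each direction) is the natural one and does yield both inequalities. However, your justification of the bound for restrictions is wrong, and the error is not the harmless one you suggest. You invoke Clifford's theorem to say that the irreducible constituents of $\sigma|_{\Gamma_0}$ form a single $\Gamma$-orbit of size at most $m$, and then in your closing paragraph assert that normality of $\Gamma_0$ is not actually needed because the stabilizer of $\tau$ contains $\Gamma_0$ in all cases. This does not hold up: when $\Gamma_0$ is not normal in $\Gamma$, conjugating a representation $\tau$ of $\Gamma_0$ by $g\in\Gamma$ produces a representation of $g\Gamma_0 g^{-1}$, not of $\Gamma_0$, so $\Gamma$ does not act on $\mathrm{Irr}(\Gamma_0)$ at all and there is no Clifford orbit to speak of. Your double-check paragraph is asserting the conclusion of Clifford's theorem after having dropped its hypothesis; as written, that step fails.

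The fix is already in your hands, because the degree-counting argument you correctly give for the induction bound works verbatim for restriction too. If $\tau_1,\dots,\tau_k$ are the distinct irreducible constituents of $\sigma|_{\Gamma_0}$, then by Frobenius reciprocity $\sigma$ occurs in $\mathrm{Ind}_{\Gamma_0}^\Gamma\tau_i$ for each $i$, so $\deg\sigma\le m\deg\tau_i$, i.e.\ $\deg\tau_i\ge\deg\sigma/m$; combined with $\sum_i\deg\tau_i\le\deg\sigma$, this gives $k\le m$. This is exactly the mirror of your induction argument (there you used $\deg\sigma_i\ge\deg\tau$ against $\sum_i\deg\sigma_i\le m\deg\tau$), and it requires no normality. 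Replacing the Clifford appeal with this makes the proof correct. One further point worth a sentence: since $\Gamma$ may be infinite, you are implicitly using that restriction to, and induction from, a finite-index subgroup preserve complete reducibility for finite-dimensional complex representations; that fact is true (reduce to the normal core and average) but should be acknowledged so that the decompositions into irreducible constituents you rely on are legitimate.
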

\begin{corollary}
$\rho(\Gamma_0) = \rho(\Gamma)$.
\label{Commensurable}
\end{corollary}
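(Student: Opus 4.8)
The plan is to deduce the equality directly from the two inequalities of Proposition~\ref{Res-Ind}, combined with the definition $\rho(\Gamma) = \limsup_{n\to\infty} \frac{\log R_n(\Gamma)}{\log n}$; no new idea is required, only a careful manipulation of limits superior. Note first that since $R_n$ is nondecreasing in $n$ and at least $1$, all the logarithms below are well defined; and if $\Gamma$ is rigid then $R_n(\Gamma_0) \le m R_{mn}(\Gamma) < \infty$ shows $\Gamma_0$ is rigid as well, while conversely $R_n(\Gamma) \le m R_n(\Gamma_0)$ handles the other direction, so the statement also makes sense (with both sides equal to $\infty$) when rigidity fails.

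First I would prove $\rho(\Gamma) \le \rho(\Gamma_0)$. Starting from $R_n(\Gamma) \le m R_n(\Gamma_0)$, take logarithms and divide by $\log n$ to get $\frac{\log R_n(\Gamma)}{\log n} \le \frac{\log m}{\log n} + \frac{\log R_n(\Gamma_0)}{\log n}$. Since $\frac{\log m}{\log n} \to 0$, passing to $\limsup$ on both sides yields $\rho(\Gamma) \le \rho(\Gamma_0)$.

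For the reverse inequality I would use $R_n(\Gamma_0) \le m R_{mn}(\Gamma)$. Taking logarithms again, $\frac{\log R_n(\Gamma_0)}{\log n} \le \frac{\log m}{\log n} + \frac{\log R_{mn}(\Gamma)}{\log (mn)}\cdot\frac{\log (mn)}{\log n}$. As $n \to \infty$ the factor $\frac{\log(mn)}{\log n} \to 1$, while $\big(\tfrac{\log R_{mn}(\Gamma)}{\log(mn)}\big)_{n\ge 1}$ is a subsequence of $\big(\tfrac{\log R_N(\Gamma)}{\log N}\big)_{N\ge 1}$, so its $\limsup$ is at most $\rho(\Gamma)$. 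Hence $\rho(\Gamma_0) \le \rho(\Gamma)$, and together with the previous paragraph this gives $\rho(\Gamma_0) = \rho(\Gamma)$.

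There is essentially no obstacle here: the only point requiring even mild attention is the legitimacy of the $\limsup$ arithmetic (a product of a bounded sequence with one converging to $1$, and a $\limsup$ over a subsequence being bounded by the full $\limsup$), together with the degenerate cases where some $R_n$ is infinite; all of this is routine.
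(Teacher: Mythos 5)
Your proof is correct and is precisely the routine limsup manipulation that the paper leaves implicit when it states Corollary~\ref{Commensurable} as an immediate consequence of Proposition~\ref{Res-Ind}; there is nothing to add.
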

Now, if $\rho(\Gamma) < \infty$ then $\calz_\Gamma(s)$ indeed
defines a holomorphic function on the half plane $\{s \in\bbc
\mid Re\, s > \rho(\Gamma) \}$ and $r_n(\Gamma)$ is polynomially
bounded.

Let now $\Gamma = \G (\calo_S)$ as in Section 3.  Assume further
that $\Gamma$ has the CSP.  Then by Theorem~\ref{Lubotzky-Martin},
$\rho(\Gamma) < \infty$ and $\calz_\Gamma (s)$ is indeed a well
defined function on the half plane.  Moreover, let $\Gamma_0$ be a
finite index subgroup of $\Gamma$, as in \S 3, for which
$A(\Gamma_0) = \G (\bbc)^{\#S_\infty} \times \prod\limits_{v
\notin S} L_v$ with $L_v$ open in $\G (\calo_v)$ for every $v$ and
$L_v = \G (\calo_v)$ for almost every $v$.  (We can take $\Gamma_0
= \Gamma$ if $\ker (\widehat{\G (\calo_S)} \to \G (\hat\calo_S)) =
\{ e \})$. Since there is a one-to-one correspondence between
representations of $\Gamma $ and rational representations of
$A(\Gamma)$ and since every irreducible representations of a
product of groups decomposes in a unique way as a tensor product
of irreducible representations of the factor groups, we get an
``Euler factorization":

\begin{proposition}
\label{Euler}
\[ \calz_{\Gamma_0} (s) = \calz_{\G(\bbc)} (s)^{\#S_\infty} \cdot
\prod\limits_{v \notin S} \calz_{L_v} (s)\] where
$\calz_{\G(\bbc)} (s) \; (\mathrm{resp.} \; \; \calz_{L_v} (s)) $
is the representation zeta function counting the irreducible
rational (resp. continuous) representations of $\G(\bbc)$ (resp.
$L_v$).
\end{proposition}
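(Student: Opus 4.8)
The plan is to deduce the identity from the fundamental correspondence of \S2 --- finite-dimensional representations of $\Gamma_0$ are precisely the rational representations of $A(\Gamma_0)$ --- together with the structure $A(\Gamma_0)=\G(\bbc)^{\#S_\infty}\times\prod_{v\notin S}L_v$ established above, plus one soft lemma on representations of direct products. Write $M=\G(\bbc)^{\#S_\infty}$ for the connected component and $P=\prod_{v\notin S}L_v=\widehat{\Gamma_0}$ for the profinite part. The lemma I would first record is: if $A=A_1\times A_2$, then every finite-dimensional irreducible representation $\rho$ of $A$ (rational, resp.\ continuous, in whatever category $A$ lives in) is uniquely an outer tensor product $\rho_1\boxtimes\rho_2$ of irreducibles of $A_1$ and $A_2$, with $\dim\rho=\dim\rho_1\cdot\dim\rho_2$. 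This is Schur's lemma: $\rho$ restricted to $A_1\times 1$ is semisimple (rational representations of the reductive group $\G(\bbc)$, and continuous representations of a profinite group, are completely reducible), and since $A_1$ and $A_2$ commute and $\rho$ is irreducible this restriction is $\rho_1$-isotypic for a single irreducible $\rho_1$; the commutant of $\rho_1^{\oplus k}$ then carries the irreducible representation $\rho_2$ of $A_2$, and the construction is reversible. Hence if $A_1$ and $A_2$ are rigid so is $A$, and $\calz_A(s)=\calz_{A_1}(s)\,\calz_{A_2}(s)$ as Dirichlet series. Applying this finitely many times across the $\#S_\infty$ copies of $\G(\bbc)$ --- which is rigid, its irreducible rational representations being the finitely-many-per-dimension highest-weight modules --- gives $\calz_M(s)=\calz_{\G(\bbc)}(s)^{\#S_\infty}$, and one more application across $A(\Gamma_0)=M\times P$ reduces the proposition to the claim $\calz_P(s)=\prod_{v\notin S}\calz_{L_v}(s)$.

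This last claim is the only point with any content. A continuous finite-dimensional representation of the profinite group $P$ has image a compact totally disconnected subgroup of $\GL_n(\bbc)$, hence finite (no small subgroups), hence open kernel; and an open subgroup of the product $\prod_{v\notin S}L_v$ contains $\prod_{v\notin F}L_v$ for some finite set $F$ of places. So every continuous irreducible representation of $P$ factors through a finite sub-product $\prod_{v\in F}L_v$, and by the lemma it is uniquely a restricted outer tensor product $\boxtimes_{v\notin S}\tau_v$, each $\tau_v$ an irreducible continuous representation of $L_v$, trivial for all but finitely many $v$, of total dimension $\prod_v\dim\tau_v$. Each $L_v$ is rigid: it is open in $\G(\calo_v)$, hence a virtually pro-$p$ $p$-adic analytic group with semisimple Zariski-dense image, so it has FAb and, since for a profinite group continuous representations are exactly those with finite image, $r_n(L_v)=\hat r_n(L_v)<\infty$. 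Matching coefficients of $n^{-s}$ now yields $r_n(P)=\sum_{\prod_v e_v=n}\prod_v r_{e_v}(L_v)$, where the sum is finite because $r_1(L_v)=1$ and only finitely many factors exceed $1$ in any term; this is precisely the statement that $\calz_P(s)$ equals the product $\prod_{v\notin S}\calz_{L_v}(s)$ --- formally as Dirichlet series, and (by Theorem~\ref{Lubotzky-Martin} and Corollary~\ref{Commensurable}, on the half-plane $\mathrm{Re}(s)>\rho(\Gamma)$) as an absolutely convergent identity of holomorphic functions, so the rearrangement is legitimate.

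Combining the two reductions gives $\calz_{\Gamma_0}(s)=\calz_{A(\Gamma_0)}(s)=\calz_M(s)\,\calz_P(s)=\calz_{\G(\bbc)}(s)^{\#S_\infty}\prod_{v\notin S}\calz_{L_v}(s)$, as asserted. The main (and, in truth, modest) obstacle is the bookkeeping in the middle step: one must be sure that an irreducible representation of the \emph{infinite} product $P$ is genuinely a \emph{restricted} tensor product --- involving only finitely many nontrivial local factors --- so that the Euler product is an honest identity of coefficients rather than a merely formal symbol. This is exactly where the profinite topology enters, via the ``no small subgroups'' property of $\GL_n(\bbc)$; everything else is Schur's lemma applied to the already-established description of $A(\Gamma_0)$.
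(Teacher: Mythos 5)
Your argument is correct and is essentially the same as the paper's: both rest on the bijection between representations of $\Gamma_0$ and rational representations of $A(\Gamma_0)=\G(\bbc)^{\#S_\infty}\times\prod_{v\notin S}L_v$, followed by the unique tensor-product decomposition of irreducibles of a direct product. The paper states this in a single sentence; you have supplied the details it leaves implicit, chiefly the ``no small subgroups'' observation that a continuous irreducible of the infinite profinite factor must factor through a finite sub-product, which is what makes the Euler factorization an honest Dirichlet-series identity rather than a merely formal one.
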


Now if we look at $V(p) = \{ v \mid v \notin S, \,  v|p\}$ i.e. all
the valuations of $k$ (outside $S$)   which lie over a prime $p$,
then $\prod\limits_{v \in V(p)} \calz_{L_v} (s)$ will be called
the $p$-factor of $\calz_\Gamma (s) $ and it will be denoted
$\calz^p_\Gamma (s)$.  Similarly, $\calz_{\G(\bbc)} (s)^{\#
S_\infty}$ is the infinite (or archimedean) factor of the ``Euler
factorization".

It should be noted that unlike the classical Euler factorization,
$\calz^p_\Gamma(s)$ does not exactly encode the representations of
$p$-power dimension.
\begin{example} Let $\Gamma = \SL_3(\bbz)$, so
\[
A(\Gamma) = \SL_3(\bbc) \times  \prod\limits_p \SL_3 (\bbz_p)
\]
and $\calz_\Gamma(s) = \calz_{\SL_3(\bbc)} \times \prod\limits_p
\calz_{\SL_3 (\bbz_p)} (s)$. The degrees of the irreducible
representations  of the pro-finite group $\SL_3(\bbz_p)$ divide
its order (which is a super-natural number---see
\cite[\S1.4]{Ri}). As $\SL_3(\bbz_p)$ is a virtually pro-$p$ group
the set of these degrees is contained in a finite union of type
$\bigcup^{\ell(p)}_{j = 1} q_j(p) p^{\bbn}$.
\end{example}
The picture for the general case is similar.

In the next three sections we look more carefully at the local factors.

\section{The local factors of the zeta function: the factor at
infinity}

Let $\bg$ be a connected, simply connected, complex almost simple
algebraic group and $G=\bg(\bbc)$.  As before $\calz_G (s)$ is the
zeta function counting the rational representations of $G$.  For
example $\calz_{\SL_2(\bbc)}(s) = \zeta(s)$ the Riemann zeta
function since $\SL_2$ has a unique irreducible rational
representation of each degree.

In general, the irreducible representations of $G$ are
parametrized by their \emph{highest weights} as follows: Let $\Phi$ be
the root system of $\bg$ and $\varpi_1,\dots, \varpi_r$ the
fundamental weights.  Write $\bbn = \{ 0, 1, 2, \dots \}$, and
for each $(a_1, \dots, a_r) \in\bbn^r$
consider $\lambda = \Sigma a_i \varpi_i$.
The irreducible representations $V_\lambda$ are
parametrized by these weights $\lambda$. The Weyl dimension formula
gives:
\[
\dim \; V_\lambda = \prod\limits_{\a \in \Phi^+} \; \frac{\a^\vee
(\lambda + \rho)}{\a^\vee (\rho)}
\]
where $\Phi^+ $ is the set of positive roots, $\rho$ is half the
sum of the roots in $\Phi^+$, and $\a^\vee$ is the dual root to $\a
\in\Phi^+$.  Note that $\prod\limits_{\a \in \Phi^+} \;
\frac{1}{\a^\vee(\rho)}$ is a constant depending only on $\bg$ and
not on $\lambda$, while the numerator $\prod\limits_{\a \in\Phi^+}
\; \, \a^\vee (\lambda + \rho)$ is a product of $\kappa = | \Phi^+
|$ linear functions in $a_1, \dots, a_r$.

\begin{theorem}
\label{Archimedean}
The abscissa of convergence of $\calz_{\bg(\bbc)}
(s)$ is equal to $\frac r\kappa$, where $r = \rk \bg$ and
$\kappa = |\Phi^+|$ is the number of positive roots.
\end{theorem}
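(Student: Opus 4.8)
The plan is to reduce the convergence of $\calz_{\bg(\bbc)}(s)=\sum_{\lambda}(\dim V_\lambda)^{-s}$, where $\lambda$ ranges over $\bbn^r$, to an elementary estimate on a sum over a lattice cone of a product of $\kappa$ linear forms. By the Weyl dimension formula, $\dim V_\lambda = c\cdot\prod_{\a\in\Phi^+}\a^\vee(\lambda+\rho)$ with $c>0$ a constant, and each factor $\a^\vee(\lambda+\rho)$ is a linear function of $a=(a_1,\dots,a_r)$ with positive coefficients (since $\a^\vee$ is a nonnegative combination of the simple coroots and $\lambda+\rho$ lies strictly inside the dominant cone). So up to the harmless constant $c$,
\[
\calz_{\bg(\bbc)}(s) \;\asymp\; \sum_{a\in\bbn^r}\ \prod_{\a\in\Phi^+} \ell_\a(a)^{-s},
\]
where $\ell_\a(a)=\a^\vee(\lambda+\rho)>0$ for all $a\in\bbn^r$. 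The abscissa of convergence of the right-hand side is what we must show equals $r/\kappa$.

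For the \emph{upper bound} $\rho(\bg(\bbc))\le r/\kappa$: each $\ell_\a$ is bounded below by a positive multiple of $1+\sum_i a_i$ only for the $\a$ that are "generic", so instead I would use that the $\kappa$ forms $\{\ell_\a\}$ together dominate $(1+a_1)\cdots$ in a more global sense. Concretely, among the positive roots the simple roots $\a_1,\dots,\a_r$ give $\ell_{\a_i}(a)\ge a_i+1$ (after rescaling), so
\[
\dim V_\lambda \ \ge\ \mathrm{const}\cdot \prod_{i=1}^r (a_i+1)\cdot \prod_{\a\in\Phi^+\setminus\{\a_1,\dots,\a_r\}} \ell_\a(a).
\]
Each remaining factor is $\ge 1$, and more usefully one shows that the product of \emph{all} $\kappa$ forms is $\ge \mathrm{const}\cdot\bigl(\prod_i(a_i+1)\bigr)^{\kappa/r}$ on the cone — this is a convexity/AM–GM statement, since each coordinate $a_i$ appears in at least $\kappa/r$ of the forms on average, and one can check it appears in sufficiently many to push through the bound; this is exactly where $\kappa/r$ enters. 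Then $\sum_{a\in\bbn^r}(\dim V_\lambda)^{-s}\le \mathrm{const}\cdot\sum_{a\in\bbn^r}\prod_i(a_i+1)^{-s\kappa/r}=\mathrm{const}\cdot\zeta(s\kappa/r)^r$, which converges for $s\kappa/r>1$, i.e. $s>r/\kappa$.

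For the \emph{lower bound} $\rho(\bg(\bbc))\ge r/\kappa$: restrict to the diagonal ray $a=(m,m,\dots,m)$, or more precisely count $\lambda$ in a large box. If $a\in\{0,1,\dots,N-1\}^r$ then every $\ell_\a(a)=O(N)$, so $\dim V_\lambda = O(N^\kappa)$; thus the number of irreducible representations of dimension $\le CN^\kappa$ is $\ge N^r$. Writing $n=CN^\kappa$ gives $R_n(\bg(\bbc))\gtrsim n^{r/\kappa}$, whence $\rho(\bg(\bbc))\ge r/\kappa$. One small point to handle: distinct $\lambda$ can give the same dimension, but since the number of $\lambda$ with $\dim V_\lambda\le n$ is at least the count just given, the inequality on $R_n$ is unaffected — $R_n$ counts representations, and inequivalent highest weights yield inequivalent representations.

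The main obstacle is the AM–GM step in the upper bound: one needs that $\prod_{\a\in\Phi^+}\ell_\a(a)$ genuinely grows like $\bigl(\prod_i (a_i+1)\bigr)^{\kappa/r}$ and not more slowly along some degenerate direction of the cone. The cleanest route is to note that $\sum_{\a\in\Phi^+}\a^\vee = \sum_{\a\in\Phi^+}\a^\vee$ is (a multiple of) $h-1$ times... — more simply, $\sum_{\a\in\Phi^+}\langle\varpi_i,\a^\vee\rangle$ can be computed from $2\rho^\vee=\sum_{\a\in\Phi^+}\a^\vee$, and the total "degree" in each $a_i$, summed over the $\kappa$ forms, is the $i$-th coordinate of $2\rho^\vee$ in the fundamental-coweight-type expansion; by Weyl-group symmetry these are comparable across $i$, and a direct application of the weighted AM–GM inequality $\prod \ell_\a(a)\ge$ (geometric mean)$^\kappa$ against the linear form $\sum_\a \ell_\a(a)\ge \mathrm{const}\sum_i a_i$ does the job. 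I expect that the honest version of this requires only the inequality
\[
\prod_{\a\in\Phi^+}\ell_\a(a)\ \ge\ \mathrm{const}\cdot\Bigl(\sum_{i=1}^r a_i + 1\Bigr)^{\kappa},
\]
\emph{no}, that is false (it would force $\rho\le 1$); rather one needs the sharper multivariate bound above, and verifying it uniformly over the simplicial cone — using that each $\ell_\a$ is a \emph{positive} form so no cancellation occurs — is the technical heart of the argument.
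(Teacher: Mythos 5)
Your lower bound is correct and coincides with the paper's: count $\lambda$ in a box $[0,N)^r$, note $\dim V_\lambda = O(N^\kappa)$, so $R_{CN^\kappa}(\bg(\bbc))\ge N^r$ and $\rho\ge r/\kappa$. Your upper bound has a genuine gap, and you acknowledge it yourself. The target inequality you want,
\[
\prod_{\a\in\Phi^+}\a^\vee(\lambda+\rho)\ \ge\ c\,\prod_{i=1}^r(a_i+1)^{\kappa/r}
\qquad\text{for all }(a_1,\ldots,a_r)\in\bbn^r,
\]
would indeed give $\calz_{\bg(\bbc)}(s)\le c^{-s}\zeta(s\kappa/r)^r$ and hence $\rho\le r/\kappa$. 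But you never prove it, and the heuristics you offer do not hold up: ``each $a_i$ appears in $\kappa/r$ of the forms on average'' is a tautology, the coordinates of $2\rho^\vee$ in the simple-coroot basis are \emph{not} comparable across $i$ (for $A_3$ they are $3,4,3$), and a weighted AM--GM against the single form $\sum_\a\a^\vee(\lambda+\rho)$ cannot produce a lower bound of the multiplicative shape $\prod_i(a_i+1)^{\kappa/r}$. The step you called the ``technical heart'' is the entire content of the upper bound, not a technicality you may defer.

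That inequality is nevertheless true, and filling the gap leads to exactly the combinatorial fact the paper relies on. Set $b_i=a_i+1$ and write $\a^\vee=\sum_i c_i(\a)\a_i^\vee$, so $\a^\vee(\lambda+\rho)=\sum_i c_i(\a)b_i\ge c_i(\a)b_i$ whenever $c_i(\a)>0$. If one has a fractional assignment $w_i(\a)\ge 0$, supported where $c_i(\a)>0$, with $\sum_i w_i(\a)=1$ for each $\a$ and $\sum_\a w_i(\a)=\kappa/r$ for each $i$, then weighted geometric means give $\prod_\a\a^\vee(\lambda+\rho)\ge c\prod_i b_i^{\kappa/r}$ uniformly on $\bbn^r$. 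By max-flow/min-cut such an assignment exists if and only if for every $I\subseteq\{1,\dots,r\}$ the set of $\a\in\Phi^+$ whose coroot is supported on $I$---which is $\Phi_I^+$ for $\Phi_I=\Phi\cap\Span\{\a_i:i\in I\}$---has at most $|I|\kappa/r$ elements, i.e. $\rk\Phi_I/|\Phi_I^+|\ge r/\kappa$. This is precisely the inequality (\ref{stability})--(\ref{reducible-stability}) established in the paper via monotonicity of the Coxeter number plus a mediant inequality for reducible subsystems. The paper itself takes a different route that avoids the global monomial bound: it stratifies dominant weights by the increasing chain of root subsystems $\Psi_j(\lambda)$ recording which factors $\a^\vee(\lambda+\rho)$ are small, bounds the number of weights and the size of $\dim V_\lambda$ stratum by stratum, and finishes with an elementary convergence lemma for $\sum_{b_1<\cdots<b_k}\exp(\sum a_ib_i)$---invoking the same root-system inequality, in fact for arbitrary (not only parabolic) subsystems arising along the chain. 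Once the matching lemma is actually proved, your route would be a clean alternative; as written, the proposal is incomplete.
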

\begin{proof}
The description above implies that
$$\calz_G (s)
= \suml^\infty_{a_1 = 0} \cdots \suml^\infty_{a_r = 0}
(\dim V_{a_1\varpi_1 + \dots + a_r\varpi_r})^{-s}.$$

Thus we have a question of the following type:
Given an $r\times \kappa$ matrix $b_{ij}$ of non-negative integers
and a vector $c_j$ of positive integers, what is the abscissa
of convergence of the Dirichlet series
$$\sum_{a_1=0}^\infty\cdots\sum_{a_r=0}^\infty \Bigl\{\prod_{j=1}^\kappa
(b_{1j} a_1 + \cdots + b_{rj} a_r + c_j)\Bigr\}^{-s}.$$
If we focus attention on the cube
$$\{(a_1,\ldots,a_r)\mid 0\le a_1,\ldots,a_r < N\},$$
we see that a typical term in this part of the
sum is of size $O((N^\kappa)^{-s})$.  Since there are $N^r$ such terms,
one might guess that the abscissa of convergence corresponds to the
real value $s$ for which $(N^u)^{-s}$ is comparable to the reciprocal of
$N^r$, i.e. $s=r/\kappa$.  For generic choices of the matrix
$b_{ij}$, this turns out to be right.  On the other hand, there may be
subsets of the cube of substantial size for which the product
of the sums $b_{1j} a_1 + \cdots + b_{r,j}+c_j$ is much smaller than
$N^{\kappa}$.  This happens if $(a_1,\ldots,a_r)$ lies near many
of the hyperplanes $H_j: b_{1j}x_1+\cdots+b_{rj}x_j=0$.  (In our examples,
these $H_j$ are precisely the walls of the Weyl chambers.)

To see how this can work, consider the series
$$\sum_{a=0}^\infty\sum_{b=0}^\infty\sum_{c=0}^\infty
((a+1)(a+b+1)(a+2b+1)(c+1))^{-s}.$$
If we consider only the $N$ terms with $a=b=0$, we obtain the Riemann
zeta-function, which diverges at $s=1$, where our naive guess
gave convergence for $\Re(s)>3/4$.  The problem is that three of the four
rows of our matrix of coefficients lie in a two dimensional subspace.
In order to compute the abscissa of convergence in any particular case, we
need to examine both the generic behavior on cubes $[0,N-1]^r$
and also behavior near the $H_j$.  In fact, we may need to consider
cases in which the index is near several $H_j$ but much nearer to some
than to others.
In the proof below, all of this is handled by a combinatorial
strategy that breaks up $[0,N-1]^r$ into subsets according, roughly,
to an integer vector which approximates the vector of
logarithms of the distances of
an index $(a_1,\ldots,a_r)$ from each of the $H_j$.

We begin, though, with the easy direction, proving that
$\calz_G(s)$ diverges for  $s = \frac r\kappa$.
If for $\lambda = (a_1, \dots, a_r)$ and $m >0$,
we have $a_i \le m$ for every $i = 1,\dots, r$, then $\dim
V_\lambda \le c_0 m^\kappa$ for some absolute constant $c_0$
depending only on $\bg$ (since, as mentioned above, the numerator
of $\dim V_\lambda$ is a product of $\kappa$ linear functions of
the coefficients $a_i$).  Thus $(\dim V_\lambda)^{-r/\kappa} \ge
c_1 m^{-r} $ for some constant $c_1>0$. Look now at the partial
sums $S_j$ taken over all $\lambda = (a_1,\dots, a_r)$ with $2^j <
a_i \le 2^{j+1}$.  As there are $(2^{j+1} - 2^j)^r = 2^{jr}$
summands, and each of them contributes at least
$c_1(2^{j+1})^{-r}$, we have $S_j \ge c_1/{2^r}$.  The sets $S_j$
are disjoint so $\calz_G\left(\frac r \kappa\right) \ge
\suml^\infty_{j=1} c_1/2^r = \infty$.

We have now to prove that for every $s > \frac r \kappa$,
$\calz_G (s)$ converges. For each $j \in \bbn$, let
$\Psi_j(\lambda) $ denote
\[
\Phi \cap \Span_\bbr \{ \a \in \Phi\mid |\a^\vee(\lambda + \rho) | <
e^j\}
\]
It is not difficult to check that $\Psi_j(\lambda)$ is itself a
root system (reduced but not necessarily irreducible). Moreover,
we clearly have
\[
\Psi_1(\lambda) \subseteq \Psi_2(\lambda) \subseteq \dots
\]
and the sequence stabilizes at $\Phi$.

Now, if $\a \in \Psi_{j+1} (\lambda) \backslash \Psi_j(\lambda)$
then $\log |\a^\vee (\lambda + \rho)| = j + O (1)$ and so:
\begin{align}
\label{logdim}
\log\dim V_\lambda &= \suml_{\a \in\Phi^+} \log \a^\vee (\lambda
+ \rho) + O (1) = \\
&=\suml_{\a\in\Phi^+} \; \suml^\infty_{j = 1} \eta(\a, j) +
O(1)\nonumber \\
\mathrm{where \ } \; \eta(\a, j) &= \begin{cases} 1 &\a
\notin\Psi_j
(\lambda)\\
0 &\a\in\Psi_j(\lambda)\end{cases}\nonumber
\end{align}
The last sum is equal (up to a constant depending on ${\Phi}$ but
not on $\lambda$) to\break $\suml^\infty_{i = 1} (|\Phi^+| -
|\Psi_i(\lambda)^+|) + O(1)$.

Let us now evaluate $\calz_G (s)$ for $s = \frac r \kappa +
\epsilon$, for a fixed $\e > 0$: Every $\lambda $ gives rise to a
sequence of root subsystems

\begin{equation}
\label{systemchain} \Psi_1(\lambda) \subseteq \dots \subseteq
\Psi_\ell (\lambda) = \Phi.
\end{equation}
This is an increasing sequence but with possible repetitions. We
will sum on $\lambda$ (and hence on these sequences) according to
the subsequence which omits the repetitions.  So we sum over all
possible strictly increasing sequences of subsystems
\begin{equation}
\label{strict-chain}
\Phi_1 \subsetneq \Phi_2 \subsetneq \dots \subsetneq \Phi_k = \Phi.
\end{equation}
Note that $k \le r$
(since $\dim \Span\Phi = r$).
 A sequence of type
(\ref{systemchain}) determines (and is determined by) a sequence of type (\ref{strict-chain})
together with a sequence of positive integers $b_1 < b_2 < \dots <
b_k$, such that
\begin{equation}
\begin{cases}
&\Psi_1 (\lambda) =  \dots = \Psi_{b_1 -1} (\lambda ) = \emptyset,\\
&\Psi_{b_1} (\lambda) = \dots = \Psi_{b_2 -1} (\lambda) = \Phi_1,\\
&\Psi_{b_2} (\lambda) = \dots = \Psi_{b_3 -1} (\lambda) = \Phi_2,\\
&\dots\, .\end{cases}
\end{equation}

Choose now a basis $\{\a_1, \dots, \a_r \}$ for $\Phi$ such that
the first $c_1$ vectors span the space $\Span (\Phi_1)$, the first
$c_2$ span $\Span (\Phi_2)$ etc.  This implies that for some
constant $\delta_1 \ge 1$
\begin{equation}
\label{factorbounds} 0 < \a_i^\vee (\lambda+ \rho) \le \delta_1
e^{b_j}\quad \forall i\le c_j,\ i=1,2,\ldots,k.
\end{equation}
\medskip

 Now, given $\Phi_1 \subsetneq \dots \subsetneq \Phi_k$ we
will sum over all possible sequences $1 \le b_1 < b_2 < \dots <
b_k$. We claim next that the number of dominant weights giving
rise to a particular pair of sequences $\Phi_1 \subsetneq \dots
\subsetneq \Phi_k$ and $b_1 < b_2 < \dots < b_k$ is bounded above
by a constant $\delta_2$ times
\begin{equation}
\label{boundedweights}
\exp (b_1 \rk \Phi_1 + b_2 (\rk \Phi_2 - \rk \Phi_1)
+ \dots + b_k(\rk \Phi_k - \rk \Phi_{k-1}))
\end{equation}

To see this, observe that the map
\begin{equation}
D: \lambda \mapsto \left(\a^\vee_1 (\lambda),\dots,
\a^\vee_r(\lambda)\right)
\end{equation}
is an injective linear transformation from $\bbn^r$ (identified
with set of dominant weights via the map $(a_1,\dots, a_r)
\mapsto\lambda = \suml^r_{i=1} a_i\varpi_i$) to $\bbn^r$. The map
$$ \lambda \mapsto \left(\a^\vee_1(\lambda + \rho), \dots,
\a^\vee_r(\lambda + \rho)\right)$$ is therefore an injective
affine map.  We need to bound the size of the set of all $\lambda
\in\bbn^r$ which give rise to $\Phi_1 \subsetneq \dots \subsetneq
\Phi_k$ and $b_1 < \dots < b_k$.  Each such $\lambda$ satisfies
all the inequalities of (\ref{factorbounds}).  Since $\det D$ is a
constant, their number is indeed bounded by a constant $\delta_2$
times (\ref{boundedweights}).

Finally, for each $\lambda$ the contribution of $V_\lambda $ to
$\calz_G \left(\frac r \kappa + \e\right)$ is bounded above by
some constant $\delta_3$ times
\begin{equation}
\exp\left( - \left(\frac r\kappa + \e\right) \Bigl(b_1| \Phi^+_1| + b_2(|\Phi^+_2|
- |\Phi^+_1|) + \dots + b_k(|\Phi^+_k | - |\Phi_{k-1}^+|)\Bigr)\right).
\end{equation}
To see this, note that (\ref{logdim}) implies that  $\log \dim V_\lambda =
\suml^k_{i = 1} b_i(|\Phi^+_i| - |\Phi^+_{i-1}|) + O(1)$ where
$\Phi^+_0 = \emptyset$.

Thus, for a suitable constant $\delta_4> 0$,
\begin{align}
\calz_G &(\frac r\kappa +\e) \le \delta_4
\suml_{\emptyset = \Phi_0 \subset \Phi_1\subset \dots \subset \Phi_k
=
\Phi} \; \; \suml_{1\le b_1<b_2<\dots <b_k}\nonumber\\
&\exp\left(\suml^k_{i=1} b_i(\rk\Phi_i-\rk \Phi_{i-1})\right)
\exp\left(-(\frac r\kappa +\e)\suml^k_{i=1}
b_i(|\Phi_i^+ | - |\Phi^+_{i-1}|)\right)\\
=\delta_4\hskip -10pt\suml_{\emptyset=\Phi_0\subset
\dots\subset\Phi_k=\Phi} &\left(\suml_{1\le b_1<\dots<b_k}
\exp(\suml^k_{i=1} b_i\left[(\rk\Phi_i - \rk\Phi_{i-1})-(\frac
r\kappa +\e)(|\Phi^+_i| - |\Phi^+_{i-1}|)\right]\right) \nonumber
\end{align}

To evaluate this sum we will use the following elementary
convergence lemma:
\begin{lemma*}
For constants $a_1,\dots, a_k\in\bbr$ the series
$$\suml_{1\le b_1 < \dots < b_k} \exp (\suml^k_{i =1} a_i b_i)$$
converges if and only if
\begin{equation}
a_k < 0, \, a_{k-1} + a_k < 0, \, \dots, \,a_1 + \dots +a_k < 0
\end{equation}
\end{lemma*}

\begin{proof}
Let
$$F_k(a_1,\dots, a_k) \eqdef \sum_{1\le b_1 < \dots < b_k} \exp(a_1 b_1 + \dots +a_k b_k).$$
Then,
\begin{align*}
F_k(a_1,\dots, a_k) &=
\sum^\infty_{b_1=1} \exp (a_1 b_1)
\sum_{1 + b_1 \le b_2 < \dots < b_k} \exp (a_2 b_2 + \dots + a_k b_k) \\
 &=\sum^\infty_{b_1 = 1} \exp \left( (a_1 + \dots +a_k)
 b_1\right) \sum_{1 \le c_2 < \dots < c_k} \exp (a_2 c_2 + \dots
 + a_k c_k ) \\
 &=\, \frac{\exp(a_1 + \dots + a_k)}{1-\exp(a_1 + \dots + a_k)} \,
 F_{k-1} (a_2, \dots, a_k) \\
 &=\prod\limits^k_{n=1} \; \frac{\exp (a_n + \dots + a_k)}{1-\exp
 (a_n + \dots + a_k)}.
\end{align*}
and the lemma follows.
\end{proof}
\bigskip

\medskip

In our application $a_i = (\rk\Phi_i - \rk\Phi_{i - 1}) -
(\frac r\kappa +\e)(|\Phi^+_i| - |\Phi^+_{i-1}|)$ and hence for $i
= 1, \dots, k$,
\begin{equation}
\label{telescope}
a_k + a_{k-1} + \dots +a_i =
(\rk\Phi - \rk\Phi_{i-1}) - (\frac r\kappa + \e)(|\Phi^+| -
|\Phi^+_{i-1}|)
\end{equation}
 (where $\Phi_0 = \emptyset$).

We need to prove that (\ref{telescope}) is less than $0$ or equivalently:
\begin{equation}
\label{diff-quotient}
\frac{\rk\Phi-\rk\Phi_{i-1}}{|\Phi^+| - |\Phi^+_{i-1}|}\, < \, \frac
r\kappa +\e
\end{equation}

By inspection of all the pairs of irreducible root systems
$\Phi_{i-1} \subset \Phi$, one sees that
\begin{equation}
\label{stability}
\frac{\rk\Phi_{i-1}}{|\Phi^+_{i-1}|} \, \ge \,
\frac{\rk\Phi}{|\Phi^+|} \, = \,\frac r\kappa.
\end{equation}

One can also give a conceptual proof of this inequality based on
the observation that $\frac r\kappa = \frac 2 h$ where $h$ is the
Coxeter number.  Now, if $\Phi_{i - 1} \subset \Phi$, the Coxeter
numbers satisfy $h_{i-1} \le h$.  This can be seen, for example,
from the fact that the Coxeter number minus one is the largest
exponent of the irreducible root system and the fact that this is
non-decreasing for inclusions of root systems can be deduced by
comparing the orders of the corresponding Chevalley groups.

Now, if $\Phi_{i-1}$ is reducible, say, $\Phi_{i-1} = \Phi'_{i-1} \coprod \Phi^{''}_{i-1}$ then
\begin{equation}
\label{reducible-stability}
\frac{\rk\Phi_{i-1}}{|\Phi^+_{i-1}|} \, = \, \frac{\rk \Phi'_{i-1} +
\rk \Phi^{''}_{i-1}}{|\Phi^{'+}_{i-1} | + |\Phi^{''+}_{i-1}|} \,
\ge \, \min\left\{ \frac{\rk\Phi'_{i-1}}{|\Phi^+_{i-1}|}, \;
\frac{\rk\Phi^{''}_{i-1}}{|\Phi^{''}_{i-1}|}\right\}
\end{equation}

The last inequality of (\ref{reducible-stability}) follows from the fact that if $a, b,
c, d \in\bbn$ then $\frac{a+c}{b+d}\, \ge \, \min\{ \frac ab,
\frac cd\}$.  It now follows that (\ref{stability}) is true also if
$\Phi_{i-1}$ is not necessarily irreducible.

Another elementary property of $a, b, c, d \in \bbn$ is that if $a
\le c,\;  b \le d$ and $\frac ab \ge \frac cd$ then
$\frac{c-a}{d-b} \, \le \frac cd$.  Applying this for $a = \rk
\Phi_{i-1},\, b = | \Phi_{i-1}^+|, \, c = \rk\Phi$ and
$d = |\Phi^+|$ and using (\ref{stability}), we deduce that $\frac{\rk\Phi -
\rk\Phi_{i-1}}{|\Phi^+| - | \Phi^+_{i-1}|} \, \le \, \frac r\kappa$
and hence (\ref{diff-quotient}) holds.  This finishes the proof of Theorem~\ref{Archimedean}.
\end{proof}

\begin{remarks*}
\begin{enumerate}
\item
We prove Theorem~\ref{Archimedean} in the simply connected case because this is the
only case we need for the intended application, and because the parametrization of
irreducible representations is slightly simpler in this case than in general.
The theorem is true without this hypothesis, however, and the argument is
unchanged except that instead of summing over $\bbn^r$, we sum over its intersection
with some finite-index subgroup of $\bbz^n$.

\item As far as we know, the function $\zeta_G(s)$ first appeared in the literature
in a paper of Witten \cite{Wi},
which discussed its values at positive even integers.
If $\Sigma$ is a compact orientable surface of genus $g\ge 2$,
$G^c$ is a compact form of $G$, and
$E$ is a principal $G^c$-bundle over $\Sigma$,
$\zeta_G(s)$ arises in the computation of the volume of
the moduli space $M$ of flat connections on $E$ up to gauge
transformations.  More precisely, $M$ has a natural symplectic structure $\w$
and a natural volume form $\theta = \frac{\w^n}{n!}$, where $2n =
\dim M = (2g-2)\kappa$, $\kappa = |\Phi^+|$, and $\Phi$ is the
root system of $\bg$.  As $\w$ represents the first Chern class of
a natural line bundle over $M$, the volume of $M$ with respect to
$\theta$ is rational.

On the other hand, the same integral can be computed by means of a
decomposition of $\Sigma$ into $2g-2$ pairs of pants, and from this
description it can be shown
that up to a rational normalizing factor, $\intl_M\theta$ is
$${(2\pi)^{-\dim M}}\sum_\lambda (\dim V_\lambda)^{2-2g}.$$
This shows that $\calz_G (s) = \suml_\lambda (\dim V_\lambda)^{-s}$
has the ``zeta property" that its value at every
positive even integer is a rational number times a suitable (integral) power of $\pi$.

We have no reason to believe this property is shared by our ``global" zeta
functions of arithmetic groups, but neither can we disprove it.
\end{enumerate}
\end{remarks*}

\section{The $p$-local factor}

We shift our attention now to the local factors at the finite
primes, i.e., to $\calz_{L_v}(s)$ in the notation of Proposition~\ref{Euler}.
This is the representation zeta function of the group $L_v$
which is open in $\bg(\calo_v)$ and it is equal to $\bg (\calo_v)$
for almost all $v$.

When $\char (k) = 0, L_v$ has an open uniform pro-$p$ subgroup
(cf. \cite[Chapter~4]{DDMS}).  An important result of A. Jaikin-Zapirain
asserts:
\begin{theorem}[Jaikin-Zapirain \cite{Ja2}]
\label{Jaikin-rational}
Assume $\char(k) = 0$ and $p > 2$ or
if $p=2$, assume $L_v$ is uniform. Then $\calz_{L_v}(s)$ is a
rational function in $p^{-s}$. More precisely, there exist natural
numbers $k_1,\dots, k_t$ and functions $f_1 (p^{-s}), \dots, f_t
(p^{-s})$ rational (with rational coefficients) in $p^{-s}$ such
that
$$
\calz_{L_v}(s) = \suml^t_{i=1} k^{-s}_i f_i(p^{-s})
$$
\end{theorem}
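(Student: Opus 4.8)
The plan is to reconstruct the argument by the Kirillov orbit method combined with $p$-adic integration, in the spirit of the Grunewald--Segal--Smith / du Sautoy theory of subgroup zeta functions. The first step is a reduction to a uniform group. Since $\char(k)=0$, the group $L_v$ has an open \emph{normal} uniform pro-$p$ subgroup $N$ (if $p=2$ we are given $L_v$ itself uniform, so take $N=L_v$), and $Q:=L_v/N$ is a finite $p$-group acting on $\mathrm{Irr}(N)$. By Clifford theory every $\rho\in\mathrm{Irr}(L_v)$ lies over a $Q$-orbit of some $\psi\in\mathrm{Irr}(N)$, with $\rho(1)=[Q:\Stab_Q(\psi)]\cdot e\cdot\psi(1)$, where $e$ is the degree of an irreducible projective representation of $\Stab_Q(\psi)$ for a $2$-cocycle determined by $\psi$. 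The possible ``defects'' $d:=[Q:\Stab_Q(\psi)]\cdot e$ form a finite set $k_1,\dots,k_t$ bounded in terms of $|Q|$; grouping $\mathrm{Irr}(L_v)$ by defect gives $\calz_{L_v}(s)=\sum_{i=1}^t k_i^{-s}\,f_i(s)$, where $f_i(s)=\sum\psi(1)^{-s}$ is taken over the $\psi\in\mathrm{Irr}(N)$ (counted with the appropriate $Q$-orbit-and-cocycle multiplicity) whose defect is $k_i$. Thus it suffices to prove that each $f_i$ --- in particular $\calz_N(s)$ itself --- is a rational function of $p^{-s}$ with rational coefficients.

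Second, for $N$ uniform I would invoke the Kirillov orbit method for saturable pro-$p$ groups (Howe; Gonz\'alez-S\'anchez; Jaikin-Zapirain). Uniformity forces $N$ to be saturable --- this is precisely where the hypothesis ``$p>2$, or $p=2$ and $L_v$ uniform'' enters --- so the $\mathbb{Z}_p$-Lie lattice $\mathfrak{g}=\log N$ is defined and there is a bijection between $\mathrm{Irr}(N)$ and the coadjoint $N$-orbits on $\mathfrak{g}^\ast=\Hom_{\mathbb{Z}_p}(\mathfrak{g},\mathbb{Q}_p/\mathbb{Z}_p)$, under which an orbit $\Omega$ gives a representation of degree $|\Omega|^{1/2}$. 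For $\omega\in\mathfrak{g}^\ast$ the stabilizer of $\omega$ is the radical of the alternating form $B_\omega(x,y)=\omega([x,y])$, so $|\Omega_\omega|=[\mathfrak{g}:\mathrm{rad}\,B_\omega]=p^{2m(\omega)}$ and
$$\calz_N(s)=\sum_{\Omega}|\Omega|^{-s/2}=\sum_{\omega\in\mathfrak{g}^\ast}|\Omega_\omega|^{-1-s/2}=\sum_{\omega\in\mathfrak{g}^\ast}p^{-(2+s)\,m(\omega)},$$
where $m(\omega)$ depends only on the elementary-divisor type of (a suitable integral rescaling of) $B_\omega$.

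Third, I would turn this into a $p$-adic integral and invoke a rationality theorem. Filtering $\mathfrak{g}^\ast$ by level (an $\omega$ of level $\le\ell$ being one factoring through $\mathfrak{g}\to\mathfrak{g}/p^\ell\mathfrak{g}$, equivalently a $\rho$ trivial on $N^{p^\ell}$) writes $\calz_N(s)$ as a limit of finite sums, and a standard Pontryagin-duality manipulation summing over levels turns it into an identity of the form
$$\calz_N(s)=c\cdot\int_{\Lambda}\left|\mathrm{Pf}'(B_\omega)\right|_p^{s+2}\,d\omega\;+\;(\text{bottom-level correction}),$$
with $\Lambda=\Hom_{\mathbb{Z}_p}(\mathfrak{g},\mathbb{Z}_p)$, $d\omega$ additive Haar measure, and $\mathrm{Pf}'$ the product of the nonzero elementary-divisor square roots of $B_\omega$ (a Pfaffian-type invariant with $\mathbb{Q}$-coefficients). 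This is a $p$-adic integral of the form $\int|f|_p^{s}$, whose value is a rational function of $p^{-s}$ with rational coefficients by Denef's theorem --- via Macintyre's quantifier elimination for $\mathbb{Q}_p$ together with cell decomposition, or via Hironaka resolution of the rank-drop locus of $B_\omega$ (the cone-integral formalism). The same machinery handles the twisted series $f_i$, since the further conditions defining them ($Q$-orbit type and triviality of the relevant cohomology class) are again definable in the $p$-adic structure; combined with the first step this yields the asserted formula for $\calz_{L_v}(s)$.

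The main obstacle I anticipate is the interface between the orbit method and the filtration by level: the irreducible representations of $N$ actually factor through the \emph{finite, non-uniform} quotients $N/N^{p^\ell}$, so one needs the Kirillov--Lazard correspondence and the degree formula $\rho(1)=|\Omega|^{1/2}$ to hold uniformly in $\ell$ and to assemble into a single $p$-adic integral over $\mathfrak{g}^\ast$ --- exactly the place where saturability is indispensable and where the $p=2$ non-uniform case genuinely fails. A secondary difficulty is the Clifford-theoretic bookkeeping of the first step: one must check that the $2$-cocycles that arise are controlled, that the defect set $\{k_i\}$ is finite, and --- the real point --- that \emph{each} $f_i$ is individually rational, not merely their sum $\calz_{L_v}(s)$.
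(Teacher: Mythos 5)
Your proposal correctly reconstructs the strategy the paper sketches: Clifford-theoretic reduction to a uniform normal open subgroup, the Kirillov orbit method (following Howe) to parametrize irreducibles by coadjoint orbits on the $\mathbb{Z}_p$-Lie lattice with degree $|\Omega|^{1/2}$, and finally expression of the resulting orbit sum as a $p$-adic integral whose rationality in $p^{-s}$ follows from Denef's theorem. Note that the paper does not actually prove this theorem---it cites it to Jaikin-Zapirain \cite{Ja2} and offers only a one-paragraph outline of the method---so your expansion, including the correct identification of where saturability and the Clifford bookkeeping for the defect set $\{k_i\}$ are needed, is a faithful development of exactly that outline rather than an alternative route.
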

\begin{problem}
Does a similar result hold when $\char (k) > 0$?
\end{problem}

Theorem~\ref{Jaikin-rational} is quite deep.  It is proved by using Howe's
interpretation of the Kirillov orbit method for uniform pro-$p$
groups [Ho].  This enabled Jaikin-Zapirain to present $\calz_{L_v} (s)$ as
a $p$-adic integral and then to appeal to the work of Denef [De]
on the rationality of such integrals.

Jaikin-Zapirain also made some explicit calculations.  His main example is:
\begin{theorem}
\label{SL2}
Let $\calo_v$ be the ring of integers of a local field.
Let $\calm$ be its maximal ideal,
$\bbf_q = \calo_v/\calm$ and $L_v = \SL_2(\calo_v)$.  If $q$ is odd, then
\begin{align*}
\calz_{L_v}&(s) = 1+q^{-s} +\frac{q-3}{2} (q+1)^{-s} +2
\left(\frac{q+1}{2}\right)^{-s} +\\
&+\frac{q-1}{2}\; \,  (q-1)^{-s} + 2\left(\frac{q-1}{2}\right)^{-s} +\\
+ &\frac{ 4q(\frac {q^2-1}{2})^{-s} + \frac{(q^2-1)}{2} \,
(q^2-q)^{-s} \, + \, \frac{(q-1)^2}{2} (q^2+q)^{-s}} {1-q^{-s+1}}
\end{align*}
\end{theorem}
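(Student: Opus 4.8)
The plan is to compute $\calz_{L_v}(s)$, where $L_v=\SL_2(\calo_v)$, by sorting the continuous irreducible representations according to the smallest congruence quotient through which they factor. Write $\calm$ for the maximal ideal of $\calo_v$, put $G_n=\SL_2(\calo_v/\calm^n)$ and $K_m=\ker(L_v\to G_m)$. Every continuous irreducible representation of $L_v$ factors through some $G_n$; call it \emph{new at level $n$} if it does not factor through $G_{n-1}$, equivalently if it is nontrivial on $K_{n-1}/K_n\cong\mathfrak{sl}_2(\bbf_q)$. Then $\calz_{L_v}(s)=\sum_{n\ge0}\calz_{L_v}^{(n)}(s)$ with $\calz_{L_v}^{(n)}(s)$ the sub-sum over representations new at level $n$, and the strategy is to evaluate the level-$\le1$ part by hand, the level-$\ge2$ part by the Kirillov orbit method, and then recognize the result as a geometric series.

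First I would treat levels $0$ and $1$: the representations new there are exactly the irreducible representations of $G_1=\SL_2(\bbf_q)$, whose character table for $q$ odd is classical --- one trivial, one of degree $q$, $(q-3)/2$ of degree $q+1$, two of degree $(q+1)/2$, $(q-1)/2$ of degree $q-1$, and two of degree $(q-1)/2$. Forming $\sum r_n n^{-s}$ over this list produces exactly the first two lines of the asserted formula. (Here $q$ odd is already essential: for $q$ even the half-dimensional constituents do not occur.)

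Next, for $n\ge2$ I would run Clifford theory along the congruence filtration, which in this range is precisely Howe's orbit picture for the pro-$p$ group $K_1$ ($p$ odd guaranteeing the uniform/saturated structure needed). Put $m=\lceil n/2\rceil$; then $K_m/K_n$ is abelian for $n$ even and two-step nilpotent with centre $K_{n-1}/K_n$ for $n$ odd, and its characters are identified, via the trace form and a fixed additive character, with $\mathfrak{sl}_2(\calo_v/\calm^{n-m})$. A representation new at level $n$ restricts to $K_m/K_n$ as a single $G_n$-orbit of such characters $\psi_X$, and newness forces a certain reduction $\bar X\in\mathfrak{sl}_2(\bbf_q)$ of $X$ to be nonzero, hence (trace being zero) regular. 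Its $\SL_2(\bbf_q)$-orbit is then of exactly one of three types: regular semisimple split, regular semisimple elliptic, or regular nilpotent. For each orbit of $\psi_X$ I would compute its stabilizer in $G_n$ --- the congruence reduction of the centralizer of $\bar X$, a torus or a unipotent group extended by $\{\pm1\}$ --- extend $\psi_X$ to it (using Stone--von Neumann on the Heisenberg quotient when $n$ is odd) and induce up to $G_n$. Counting orbits of each type and the resulting induced degrees at level $2$ should give $(q-1)^2/2$ representations of degree $q^2+q$, $(q^2-1)/2$ of degree $q^2-q$, and $4q$ of degree $(q^2-1)/2$; a good consistency check is $\sum_i r_i(\dim_i)^2=|G_2|-|G_1|=q(q^2-1)(q^3-1)$. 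Since increasing $n$ by one multiplies both the orbit count and the induced degree of each type by $q$, independently of the parity of $n$, each family contributes a geometric series of ratio $q^{1-s}$, and summing gives
\[
\sum_{n\ge2}\calz_{L_v}^{(n)}(s)=\frac{4q\left(\tfrac{q^2-1}{2}\right)^{-s}+\tfrac{q^2-1}{2}(q^2-q)^{-s}+\tfrac{(q-1)^2}{2}(q^2+q)^{-s}}{1-q^{-s+1}},
\]
the third line of the formula; adding the level-$\le1$ part finishes the computation.

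The hard part, I expect, is pinning down the exact constants ($4q$, $(q^2-1)/2$, $(q-1)^2/2$, and the various factors of $2$) in the level-$\ge2$ count. This requires careful bookkeeping of (i) the square-class structure of $\bbf_q$, which controls the splitting of the regular nilpotent cone into two $\SL_2(\bbf_q)$-orbits and the structure of several stabilizers --- this is where $q$ odd is used repeatedly; (ii) whether the Heisenberg $2$-cocycle attached to $\psi_X$ at odd levels is trivial, i.e.\ whether $\psi_X$ extends to a genuine character of its stabilizer; and, most subtly, (iii) the fact that the even-level and odd-level families of each type assemble into a single geometric progression of ratio $q^{1-s}$ rather than into two different rational functions --- it is this coincidence that yields the clean closed form. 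Finally one must observe that no representation is double-counted even when degrees accidentally coincide for small $q$, which is automatic because representations are indexed by the level at which they first appear.
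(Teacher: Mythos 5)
This theorem is not proved in the paper; it is quoted from Jaikin-Zapirain's work \cite{Ja2}, so there is no ``paper's own proof'' to compare against. Your proposal is a perfectly reasonable blueprint for deriving the formula directly: sort irreducibles by minimal congruence level, use the classical $\SL_2(\bbf_q)$ character table for levels $\le 1$, and run Clifford theory along the congruence filtration (equivalently the Kirillov/Howe orbit picture) for levels $\ge 2$. The level-$0,1$ contribution is exactly the first two lines of the formula, and your sum-of-squares sanity check at level $2$ does come out to $|G_2|-|G_1|=q(q^2-1)(q^3-1)$, which strongly constrains the three level-$2$ counts.

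Two cautions are worth recording explicitly, because they are precisely the places your sketch defers to ``bookkeeping.'' First, for $n$ odd a single central character $\psi_X$ on $K_{n-1}/K_n$ does \emph{not} support a unique Heisenberg representation of $K_{\lfloor n/2\rfloor}/K_n$: because $\bar X$ is regular, the commutator form has a one-dimensional radical (the Cartan line through $\bar X$), so there are $q$ irreducibles of the abelian quotient lying above $\psi_X$, all of dimension $q$; a bare appeal to Stone--von Neumann would lose a factor of $q$ in the representation count and break the geometric ratio $q^{1-s}$. Second, in the regular-nilpotent family the stabilizer $Z_{G_1}(\bar X)\cong\{\pm1\}\times\bbf_q$ has order $2q$, not coprime to $p$, so the extension of $\psi_X$ (and later of the Heisenberg constituent) across the unipotent direction is not a free Schur--Zassenhaus argument; one must check that the relevant $2$-cocycle is trivial, and this is where the two square classes and the factor $4=2\times|\pm1|$ in the $4q$ must be accounted for. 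With these two points nailed down, the even/odd levels do assemble into a single geometric series with ratio $q^{1-s}$, and your route yields the stated formula.
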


The reader can immediately see that the function depends only on
$q$ and not on $\calo_v$ and the abscissa  of convergence of $L_v$
is always $\rho(L_v) = 1$ independently of $q$ and $\calo_v$ (see
also Proposition~\ref{compare-growth} below).
This is especially interesting since
for $\bg = \SL_2, \; r = \rk \bg = 1$ and $|\Phi^+| = 1$, so $\frac
r\kappa = 1$.

\bigskip

Now we consider the general situation.
Let $K$ be a non-archimedean local field and $\bg$ an absolutely
almost simple algebraic group over $K$.
Fix a $K$-embedding of $\bg$ in $\GL_n$ for some $n$, and let $U =
\bg(K)\cap \GL_n(\calo)$ where $\calo$ is the ring of integers of $K$.
We consider what can be said in general about $\rho(U)$.

Let $\pi$ be a uniformizer of $\calo$, $q = |\calo/\pi\calo|$, and
$$U_k = \ker \left(U\to \GL_n(\calo/\pi^k\calo)\right).$$

\begin{definition}
\begin{enumerate}
\item For a finite group $H$ we denote by $\og(H)$ the number of
its conjugacy  classes.

\item We define $\gamma(U)$ (which may, a priori, depend on the embedding of
$U$ in $\GL_n(\calo)$) as follows:
\begin{equation}
\gamma =\gamma (U) = \mathop{\lim\sup}\limits_{k\to\infty} \;
\frac{\log_q\og\left(U/U_k\right)}{k}
\end{equation}
\end{enumerate}
\end{definition}

\begin{proposition}
\label{Crude-bound}
Let $ \delta = \dim (\bg)$ then:
\[
\rho(U) \, \ge \, \frac{2\gamma}{\delta - \gamma}
\]
In other words: if $\mu = \frac \gamma\delta$ then $\rho(U) \ge
\frac{2\mu}{1-\mu}$.
\end{proposition}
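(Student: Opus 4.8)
The plan is to count irreducible representations of $U$ by filtering through the finite quotients $U/U_k$ and exploiting the relationship between the number of conjugacy classes of a finite group and the number of its irreducible representations. The basic input is that a finite group $H$ has exactly $\og(H)$ irreducible complex representations, and $\sum_{\chi \in \mathrm{Irr}(H)} \chi(1)^2 = |H|$, so in particular every irreducible representation of $H$ has degree at most $|H|^{1/2}$. Since every continuous irreducible representation of the profinite group $U$ factors through some $U/U_k$, we have
\begin{equation*}
R_n(U) \le \sum_{k} \#\{\chi \in \mathrm{Irr}(U/U_k) : \chi \text{ does not factor through } U/U_{k-1}, \ \chi(1) \le n\}.
\end{equation*}
The point is to choose, for a given bound $n$ on the degree, the right cutoff level $k$: representations of degree $\le n$ that genuinely "live" at level $k$ contribute, but once $|U/U_k|^{1/2}$ exceeds $n$ by enough, a level-$k$ representation of degree $\le n$ is quite constrained.

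First I would record the elementary estimates on the group orders. Since $\bg$ has dimension $\delta$ and $U_k/U_{k+1}$ is (for $k$ large, say $k \ge 1$ after possibly passing to a uniform open subgroup — but here we may work directly with $U$ since commensurable groups have the same $\rho$ by Corollary~\ref{Commensurable}) an abelian $p$-group of rank $\delta$, we get $|U/U_k| = q^{\delta k + O(1)}$. By the definition of $\gamma = \gamma(U)$, for every $\e > 0$ we have $\og(U/U_k) \le q^{(\gamma+\e)k}$ for all large $k$. Now fix a degree bound $N$ and let $k = k(N)$ be the largest integer with $q^{\delta k/2} \le N$, i.e. $k \approx \frac{2\log_q N}{\delta}$. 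Every irreducible representation of $U$ of degree $\le N$ which factors through $U/U_k$ is counted among the $\og(U/U_k) \le q^{(\gamma+\e)k} = N^{2(\gamma+\e)/\delta + o(1)}$ irreducible representations of that group.

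The remaining task is to bound the irreducible representations of degree $\le N$ that do \emph{not} factor through $U/U_k$ — those whose restriction to $U_k$ is nontrivial. Here I would argue inductively on the level, or more simply observe: if $\chi$ is such a representation, then $\chi$ factors through $U/U_m$ for some minimal $m > k$, and by Clifford theory applied to the normal abelian subgroup $U_{m-1}/U_m$, the degree $\chi(1)$ is divisible by the index of the stabilizer of a nontrivial character of $U_{m-1}/U_m$ appearing in $\chi|_{U_{m-1}}$; a nontrivial character of this group has stabilizer of index at least $q$ (in fact typically much larger), so $\chi(1) \ge q^{m-k}$ forces $m \le k + \log_q N = O(\log_q N)$. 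Thus all relevant representations factor through $U/U_{k'}$ for some $k' = O(\log_q N)$ with $k'/k$ bounded, and the count is still $q^{(\gamma+\e)k'} = N^{2\gamma/\delta + \e'}$. Summing over the $O(\log N)$ possible levels only changes things by a logarithmic factor, which is absorbed. Hence $R_N(U) \le N^{\frac{2\gamma}{\delta} + \e''}$ — wait, this gives an upper bound, but we want a \emph{lower} bound on $\rho(U)$. So I would instead run the argument in reverse: produce \emph{many} irreducible representations.

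Reversing: the real content is the lower bound $\rho(U) \ge \frac{2\gamma}{\delta - \gamma}$, so I would show $R_N(U)$ is large. By definition of $\gamma$, there are infinitely many $k$ with $\og(U/U_k) \ge q^{(\gamma - \e)k}$. Fix such a $k$. Among the $\og(U/U_k) = \sum_{\chi \in \mathrm{Irr}(U/U_k)} 1$ irreducible representations, all have degree $\le |U/U_k|^{1/2} = q^{(\delta k + O(1))/2}$. Set $N = q^{\delta k/2}$; then \emph{all} of these $\ge q^{(\gamma-\e)k}$ representations have degree $\le N \cdot q^{O(1)}$, so $R_{N q^{O(1)}}(U) \ge q^{(\gamma-\e)k} = N^{2(\gamma-\e)/\delta}$. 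This already gives $\rho(U) \ge \frac{2\gamma}{\delta}$, which is weaker than claimed. To get the sharper $\frac{2\gamma}{\delta-\gamma}$ one must use that the degrees are not all as large as the Cauchy–Schwarz bound $|U/U_k|^{1/2}$: the sum of squares of degrees is $|U/U_k|$ spread over $\og(U/U_k) \ge q^{(\gamma-\e)k}$ representations, so the \emph{average} squared degree is $|U/U_k|/\og(U/U_k) \le q^{(\delta-\gamma+\e)k}$, whence at least half of the representations have degree $\le q^{(\delta-\gamma+\e)k/2 + O(1)}$. Now take $N = q^{(\delta - \gamma)k/2}$: then at least $\tfrac12 q^{(\gamma-\e)k}$ irreducible representations have degree $\le N q^{O(1)}$, giving
\begin{equation*}
R_{N q^{O(1)}}(U) \ge \tfrac12 q^{(\gamma - \e)k} = N^{\frac{2(\gamma-\e)}{\delta-\gamma}} \cdot q^{O(1)},
\end{equation*}
and letting $k \to \infty$ along the good subsequence and $\e \to 0$ yields $\rho(U) \ge \frac{2\gamma}{\delta - \gamma}$, which is exactly $\frac{2\mu}{1-\mu}$ with $\mu = \gamma/\delta$.

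\textbf{Main obstacle.} The delicate point is the Cauchy–Schwarz / averaging step: I am using $\sum \chi(1)^2 = |H|$ together with the lower bound on $\og(H) = \#\mathrm{Irr}(H)$ to conclude that a positive proportion of the irreducible characters have degree bounded by $(|H|/\og(H))^{1/2}$ up to the bounded factor $q^{O(1)}$. This is a one-line Markov-inequality argument once one is careful that "a positive proportion" (say half) suffices, since constants do not affect $\rho$. A secondary technicality is controlling $|U/U_k|$ and the structure of $U_k/U_{k+1}$ uniformly — i.e. verifying $|U/U_k| = q^{\delta k + O(1)}$ — which follows from the standard theory of the congruence filtration of $\bg(\calo)$ (the graded pieces are, eventually, the $\bbf_q$-points of the Lie algebra of $\bg$, of dimension $\delta$); passing to a uniform open subgroup if necessary and invoking Corollary~\ref{Commensurable} handles any low-level irregularities and the dependence on the embedding in $\GL_n$.
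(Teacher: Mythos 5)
Your argument is correct and is essentially the same as the paper's: the key step in both is the Markov/median inequality applied to $\sum_{\chi}\chi(1)^2 = |U/U_k|$, combined with the lower bound $\og(U/U_k) \gtrsim q^{(\gamma-\e)k}$ along a subsequence, to conclude that a positive proportion of the irreducible representations of $U/U_k$ have degree at most roughly $q^{(\delta-\gamma)k/2}$. (The paper phrases this via the median degree; you phrase it via the mean and Markov --- the same computation.) One small piece of sloppiness to watch: after replacing $\gamma$ by $\gamma-\e$ in the count, the resulting degree bound is $q^{(\delta-\gamma+\e)k/2+O(1)}$, which is not $N q^{O(1)}$ for your fixed choice $N=q^{(\delta-\gamma)k/2}$ once $k$ is large; you should instead take $N_k = q^{(\delta-\gamma+\e)k/2+O(1)}$, obtain $\liminf_k \log R_{N_k}(U)/\log N_k \ge \tfrac{2(\gamma-\e)}{\delta-\gamma+\e}$, and then let $\e\to 0$. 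This does not affect the conclusion.
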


\begin{proof} The quotient $U/U_k$ is of order approximately (up to
multiplicative constant) $q^{\delta k}$ and  has approximately
$q^{\gamma k}$ representations.  If $q^{ak}$ is the median value
of the degrees of these representations, then: $\half q^{\gamma k}
\cdot (q^{ak})^2\le q^{\delta k}$. Hence $\g + 2a\le \delta +
o(1)$ as $k \to \infty$ i.e. $a \le \frac{\delta - \gamma}{2}$.
This means that $U$ has at least $\half q^{\g k}$ irreducible
representations of degree at most $q^{\half (\delta -\gamma)k}$.
Hence $\rho(U) \ge \frac{2\g}{\delta - \g}$ as claimed.
\end{proof}

\begin{proposition}
\label{ssimp-bound}
Let $K$ be a non-archimedean local field,
$\bg$ a $K$-simple algebraic group and $U$ an open compact
subgroup of $\bg(K)$.  Then $\rho(U) \ge \frac r \kappa$ where $r$
is the absolute  rank of $\bg, \kappa = |\Phi^+|$, and $\Phi^+$ is
the set of positive roots in the absolute root system of $\bg$.
\end{proposition}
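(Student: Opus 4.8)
The plan is to reduce the lower bound $\rho(U)\ge r/\kappa$ for the open compact subgroup $U\le\bg(K)$ to the already-proved Archimedean case (Theorem~\ref{Archimedean}) by exhibiting, for suitable large $k$, many irreducible representations of the finite quotient $U/U_k$ whose degrees are controlled. The key observation is that the first congruence quotient $U_1/U_k$ (for $U$ chosen, after passing to a finite-index subgroup, so that $U/U_1$ is the group of $\bbf_q$-points of the reductive quotient of a smooth model) carries the structure coming from the Lie algebra over $\calo/\pi^k$, and its complex representation theory is governed — via the Kirillov orbit method or via Clifford theory over the congruence layers — by the coadjoint orbits, i.e. by orbits on $\mathfrak{g}^*(\calo/\pi^k\calo)$. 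Since passing to commensurable subgroups does not change $\rho$ (Corollary~\ref{Commensurable}) and $\gamma(U)$ is visibly unchanged too, we are free to make such replacements.

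\textbf{First step.} I would recall that $\rho(U)\ge\frac{2\gamma(U)}{\delta-\gamma(U)}$ by Proposition~\ref{Crude-bound}, so it suffices to produce the estimate
\[
\gamma(U)\ \ge\ \frac{\delta\, r}{2\kappa + r}\,,
\]
since the function $\mu\mapsto 2\mu/(1-\mu)$ is increasing and $\mu=\frac{r}{2\kappa+r}$ gives exactly $\frac{2\mu}{1-\mu}=\frac{r}{\kappa}$ (using $\delta = 2\kappa + r$, the identity noted after Theorem~\ref{Archimedean}). So the entire problem becomes a lower bound on $\og(U/U_k)$, the number of conjugacy classes (equivalently, irreducible characters) of the finite group $U/U_k$.

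\textbf{Second step.} To bound $\og(U/U_k)$ from below I would count adjoint (or coadjoint) orbits of $U$ acting on $\mathfrak{g}(\calo/\pi^k\calo)$, or more simply count \emph{regular semisimple} conjugacy classes in the reductive group over $\calo/\pi^k$: these sit inside a maximal torus, and the number of $\calo/\pi^k$-points of a maximal torus of absolute rank $r$ is of order $q^{rk}$, while the number of maximal tori that meet to yield genuinely distinct classes and the Weyl-group collapse cost only a bounded factor. Thus $\og(U/U_k)\gtrsim q^{rk}$ at the crude level, but this only gives $\gamma\ge r$, hence $\rho\ge 2r/(\delta-r)$, which is weaker than $r/\kappa$ in general. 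The improvement must come from counting \emph{all} the orbits, stratified by how singular they are — exactly the phenomenon analyzed in the proof of Theorem~\ref{Archimedean}, where indices near the walls $H_j$ contribute extra mass. The clean way to package this is: for $\lambda$ a dominant weight of $\bg$ with all $a_i<q^{k/\kappa}$ roughly, the corresponding Deligne–Lusztig or Weyl-module-type representation of $\bg(\calo/\pi^k)$ (built from a character of a torus of "depth" read off from $\lambda$) has degree comparable to $\dim V_\lambda\le c_0 q^{k}$-ish, and these are distinct; summing as in the Archimedean proof reproduces the abscissa $r/\kappa$.

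\textbf{Where the difficulty lies.} The main obstacle is making the second step rigorous without recomputing everything: one needs a uniform construction of enough irreducible characters of $\bg(\calo/\pi^k\calo)$ (or of an open subgroup) with prescribed degrees, and a uniform lower bound on their number matching the Archimedean count. I would argue that the orbit method gives, for the congruence layer $U_1/U_k$, a bijection between irreducibles and coadjoint $U$-orbits in $\mathfrak{g}^*(\calo/\pi^{k-1})$, that each such orbit corresponds (via Weyl-type dimension data on the associated graded) to a chain of root subsystems exactly as in \eqref{systemchain}, and that the number of orbits of a given "shape" is bounded below by the same exponential \eqref{boundedweights}; then the divergence computation at $s=r/\kappa$ in the first part of the proof of Theorem~\ref{Archimedean} carries over verbatim, now with $e^{b_j}$ replaced by $q^{b_j}$ and the sum truncated at $b_k\le k$, yielding $\og(U/U_k)\ge c\, q^{\delta k r/(2\kappa+r)}$ and hence $\gamma(U)\ge \delta r/(2\kappa+r)$ as required. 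Since we only need the inequality and not an exact formula, the rationality machinery of Jaikin-Zapirain (Theorem~\ref{Jaikin-rational}) is not needed; a soft orbit-counting lower bound suffices, and commensurability invariance lets us ignore the finitely many bad primes and the precise choice of embedding.
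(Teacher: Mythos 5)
Your plan gets off on the right foot but then derails on a simple arithmetic point. After invoking Proposition~\ref{Crude-bound}, you correctly observe that a maximal torus gives $\og(U/U_k)\gtrsim q^{rk}$, hence $\gamma(U)\ge r$, hence $\rho(U)\ge \frac{2r}{\delta-r}$. You then claim this is ``weaker than $r/\kappa$ in general'' and launch into a much more elaborate program of orbit stratification and Deligne--Lusztig-type constructions. But $\delta=\dim\bg=2\kappa+r$ (the identity $\kappa=\tfrac12(\dim\bg-\rk\bg)$ noted right after Theorem~\ref{Archimedean}), so
\[
\frac{2r}{\delta-r}=\frac{2r}{2\kappa}=\frac{r}{\kappa}.
\]
The crude torus count already gives exactly the advertised bound; the ``improvement'' you propose is unnecessary, and the entire second half of your write-up is chasing a phantom deficit. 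This is in fact precisely the paper's argument: take $\T$ a maximal torus, observe that $\T(K)\cap U$ projects to a subgroup of $U/U_k$ of order $\asymp q^{kr}$, and feed $\gamma\ge r$ into Proposition~\ref{Crude-bound}.

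There is also a genuine gap in the part of your argument you do keep. You assert that ``the Weyl-group collapse cost only a bounded factor,'' i.e. that the $\asymp q^{kr}$ torus elements fall into $\gtrsim q^{kr}$ distinct conjugacy classes of $U/U_k$, without justification. This is not automatic: the relevant statement is that two diagonal matrices in $\GL_n(A)$ are conjugate only if their diagonal entries agree up to permutation, and this is \emph{false} over general commutative rings $A$ (the paper gives an explicit counterexample with an idempotent). It holds because $\calo_L/\pi^k\calo_L$ is a \emph{local} ring, where one can characterize the multiplicity of an eigenvalue $\lambda$ intrinsically as the residue dimension of $\ker(T-\lambda)$. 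This is the one real lemma in the proof, and it needs to be stated and proved (or at least cited); without it the claim that the torus contributes $\gtrsim q^{kr}$ classes, rather than far fewer, is unsupported. Once you supply that lemma and correct the arithmetic, your approach collapses to the paper's short proof and the excursion through Archimedean-style orbit stratification should be deleted.
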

\begin{proof}
Fix an embedding of $\bg \hookrightarrow \GL_n$ and then
$U\left(\subset\bg(K) \subset \GL_n(K)\right)$ is commensurable
with $\bg(\calo)\eqadef \G(K) \cap \GL_n(\calo)$ where $\calo$ is
the ring of integers of $K$.  By Corollary~\ref{Commensurable}, we can replace
$U$ by $\bg(\calo)$. Let $U_k=\ker\left (U\to
\GL_r(\calo/\pi^k\calo)\right)$, where $\pi$ is a uniformizer of
$\calo$.  Now, $[U\colon U_k]$ is of order approximately (up
to a bounded multiplicative constant) $q^{k\dim \bg}$ where $q = [\calo: \pi
\calo]$.  Let $\T$ be a maximal torus of $\bg$.  Then $\T(K)\cap
U$ is a compact open subgroup of $\T(K)$ of dimension $r$.  Its
projection in $U/U_k$, denoted $\T(\calo/\pi^k\calo)$ is of
order approximately $q^{kr}$.  Fix a maximal torus in $\GL_n$ and let
$L$ be a finite extension of $K$ over which this torus splits; let $\calo_L$ denote
the ring of integers in $L$.  We can regard $U/U_k$ and its subgroup
$T(\calo/\pi^k\calo)$ as subgroups
of $\GL_n(\calo_L/\pi^k\calo_L)$, and as such, the latter group
can be conjugated into a diagonal subgroup.

We claim that for any local ring $(A,\m)$,
two diagonal elements of $\GL_n(A)$ are conjugate if and only
if their entries are the same up to order.  To prove this, we give
a basis-independent characterization of the multiplicity of an
``eigenvalue'' $\lambda\in A$
of a diagonalizable $A$-linear map $T$ from a
rank-$n$ free $A$-module to itself.  Namely, the multiplicity of $\lambda$
is the $(A/\m)$-dimension of the image of
$\ker(T-\lambda\mathrm{Id})\subset A^n$ in $(A/\m)^n$.
We remark that this property is not true for general commutative
rings.  For instance, if $e$ is an idempotent,

$$\begin{pmatrix}e&e-1\\1-e&e\end{pmatrix}
\begin{pmatrix}1&0\\0&0\end{pmatrix}
\begin{pmatrix}e&e-1\\1-e&e\end{pmatrix}^{-1} =
\begin{pmatrix}e&0\\0&1-e\end{pmatrix}.$$

As $\calo_L/\pi^k\calo_L$ is local,
it follows that an element $x \in
\T(\calo/\pi^k\calo)$ is conjugate to at most $n!$
elements of $\T(\calo/r^k\calo)$ within $\GL_n(\calo_L/\pi^k\calo_L)$ and therefore,
a forteriori, within $U/U_k$.  This shows
that $U/U_k$ has at least $cq^{kr}$ different conjugacy
classes, for some $c>0$ which does not depend on $k$, and hence this number of different representations.  By Proposition~\ref{Crude-bound}, $\rho(U) \ge
\frac{r}{\kappa}$ as claimed.
\end{proof}

\section{The $p$-local factor: anisotropic groups}

In this section we consider another class of examples for which
$\rho = \frac r\kappa$, namely the anisotropic groups
over local fields $K$ in characteristic zero. Let $D$ be
a division algebra over $K$ of
degree $d$ and $G' = \SL_1(D)$ the $K$-algebraic group of elements
of $D$ of norm one.  Thus $G'(K)$ is a compact virtually pro-$p$ group.
This is a $K$-form of $\bg = \SL_d$, i.e. over $\overline K$, the
algebraic group $\SL_1(D)$ is isomorphic to $\SL_d$.  Thus while
$\rk_K(G') = 0$, the absolute rank of $G'$ is $d-1$
and the absolute root system is that of $\SL_d$.  In particular,
$|\Phi^+| = \frac {d^2-d}{2}$ and $\frac r\kappa\,  = \, \frac
{d-1}{(d^2-d)/2} \, = \, \frac 2d$.

\begin{theorem}
\label{SL1}
If $\char (K) = 0$, then $\rho\left(G'(K)\right) = \frac 2d$.
\end{theorem}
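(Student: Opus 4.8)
The plan is to prove the two inequalities $\rho(G'(K)) \ge \frac2d$ and $\rho(G'(K)) \le \frac2d$ separately. The lower bound is immediate from Proposition~\ref{ssimp-bound}: since $G' = \SL_1(D)$ has absolute root system of type $A_{d-1}$, we have $\frac r\kappa = \frac2d$, and $U = G'(K)$ is itself a compact open subgroup of $G'(K)$, so $\rho(G'(K)) \ge \frac2d$. All the work is in the upper bound. For this, I would pass to an open uniform pro-$p$ subgroup $U \le G'(K)$ (available since $\char(K)=0$), and exploit the Kirillov orbit method for uniform pro-$p$ groups in the form used by Jaikin-Zapirain (Theorem~\ref{Jaikin-rational} and its underlying machinery): irreducible representations of $U/U_k$-type quotients are in bijection with coadjoint orbits, so counting representations of bounded degree reduces to counting coadjoint orbits of bounded size in the dual of the associated $\bbz_p$-Lie lattice $\mathfrak g$ (a form of $\mathfrak{sl}_d$).

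The key point is that for an \emph{anisotropic} group the coadjoint action is as large as possible, so orbits are big and there are few of them. Concretely, I would argue: the Lie algebra $\mathfrak g \otimes \ol K$ is $\mathfrak{sl}_d(\ol K)$, on which a generic element has centralizer of dimension exactly $r = d-1$ (the rank). Because $D$ is a division algebra, a \emph{regular semisimple} element $x \in \mathfrak g$ generates a field extension inside $D$, and its centralizer in $\mathfrak g$ is again anisotropic of $K$-dimension $d-1$; hence the centralizer in $U$ of the image of $x$ in $\mathfrak g/\pi^k\mathfrak g$ has order $\asymp q^{(d-1)k}$ (there is no room for it to be larger, because $D$ has no nontrivial idempotents, so non-regular behaviour is confined to a proper subvariety and contributes negligibly after the standard stratification). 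Therefore a coadjoint orbit of a ``generic'' functional has size $\asymp q^{(\delta - r)k} = q^{(d^2-d)k/2\cdot 2} $— more precisely $q^{(\dim\mathfrak g - r)k}$ — and the corresponding irreducible representation has degree $\asymp q^{(\dim\mathfrak g - r)k/2}$, while the number of orbits of size $\le q^{tk}$ is $\asymp q^{rk}$ for the relevant range. Matching exponents exactly as in the proof of Proposition~\ref{Crude-bound} — but now with the \emph{sharp} value $\gamma(U) = r = d-1$ rather than merely a lower bound — gives $\rho(U) = \frac{2r}{\dim\mathfrak g - r} = \frac{2(d-1)}{(d^2-d)} = \frac 2d$.

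To make the upper bound rigorous I would stratify the dual space $\mathfrak g^* \otimes \calo/\pi^k$ by the ``singularity type'' of the functional (equivalently, by the valuation pattern of the discriminant-like invariants of the associated element of $\mathfrak g$), bound the number of orbits and their minimal sizes on each stratum using the anisotropy of $D$ (which forces every étale subalgebra of $D$ to be a field, so the relevant centralizers are torsion-free tori of the expected rank and no smaller), and check that on every stratum the inequality $\#\{\text{orbits}\}\cdot(\text{orbit size})^{1/2\cdot(-s)}$ converges for $s > \frac2d$. The main obstacle is precisely this uniform control of coadjoint orbit sizes and counts on the lower strata near the singular locus: one must verify that the ``bad'' loci, where centralizers are larger than generic, are small enough in $\mathfrak g^*\otimes\calo/\pi^k$ (of codimension growing with $k$) that they cannot produce more short-degree representations than the generic stratum — this is where anisotropy is used in an essential way and where the argument genuinely differs from (and is more delicate than) the isotropic case.
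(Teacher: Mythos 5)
Your overall strategy is exactly the one the paper uses: lower bound from Proposition~\ref{ssimp-bound}, then upper bound via Howe--Jaikin-Zapirain orbit method applied to a uniform pro-$p$ subgroup $U=\exp(pL)$ with $L$ the trace-zero lattice in a maximal order, exploiting the fact that in a division algebra every commutative \'etale subalgebra is a field. You also correctly flag that the real work is a uniform bound on the contribution of nearly-singular coadjoint orbits. But that is precisely where your proposal stops, and this is not a minor detail to be filled in: the naive ``generic stratum gives $\rho=2/d$, bad strata are small'' heuristic does not obviously close. On a lower stratum the orbits get \emph{smaller} (so representation degrees drop) faster than the number of orbits does, and whether the resulting partial sums converge for $s>2/d$ is a genuine numerical question about root systems. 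Your statement that the bad loci have ``codimension growing with $k$'' does not in itself settle the trade-off between orbit count and orbit size.

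What the paper does to resolve this is worth contrasting with your sketch. First, anisotropy is used in a sharper form than ``centralizers are tori'': $D$ has only finitely many maximal subfields up to conjugacy, so by conjugation-invariance of characters one can restrict to $x$ ranging over a fixed maximal subfield $F$. Then, crucially, one stratifies not by a vague ``singularity type'' but by the chain of root subsystems $\Psi_i[x,k]\subset\Phi$ of type $A_{d-1}$ recording which eigenvalue differences $\iota_s(x)-\iota_t(x)$ are divisible by high powers of $\pi$. Claim~1 computes $\dim V_{[x,k]}$ exactly in terms of this chain, Claim~2 bounds the number of classes $[x,k]$ yielding a given chain by $\exp_q\bigl(\sum_i (d-1-\rk\Psi_i)\bigr)$, and then --- this is the missing step in your proposal --- the convergence for $s>r/\kappa$ reduces to the same combinatorial lemma and stability inequality $\frac{\rk\Phi-\rk\Phi'}{|\Phi^+|-|\Phi'^+|}\le \frac r\kappa$ for $\Phi'\subset\Phi$ that already drove the Archimedean Theorem~\ref{Archimedean}. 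In other words, the anisotropic $p$-adic upper bound is formally identical to the Weyl-dimension-formula argument over $\C$, with the chain $\Psi_1\subseteq\cdots\subseteq\Psi_\ell$ playing the role of the chain in~(\ref{systemchain}). Without seeing that this parallel makes the ``bad strata'' tractable, your proposal does not actually establish the upper bound; you should supply Claims~1 and~2 and run the convergence lemma to complete it.
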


\medskip
\begin{remark*} We cannot prove the result  in positive
characteristic but see Theorem~\ref{not-char-2} below.
\end{remark*}

Before starting the proof of Theorem~\ref{SL1}, let us give a ``linear
algebra" lemma to be used in the proof.
\begin{lemma}
\label{base-change}
Let $K\subset L$ be an extension of local fields with ring of
integers $\calo_K$ and $\calo_L$.  Let $\pi$ be a uniformizer of
$K$ and $r \in \bbn$.

\begin{enumerate}
\item If $T\colon \calo^n_K \to \calo^n_K$ is an injective
$\calo_K$-linear map, then
\begin{align*}
&|\ker T\otimes (\calo_L/\pi^r\calo_L)|
= |\coker T\otimes (\calo_L /\pi^r\calo_L) \\
=&|\ker T\otimes (\calo_K /\pi^r\calo_k)|^{[L:K]}
= |\coker T\otimes (\calo_K/\pi^r\calo_K)|^{[L\colon K]}
\end{align*}

\item If $U\colon \calo^n_L \to \calo^n_L$ is an injective
$\calo_L$-linear map and
\[
\Lambda = \{ x \in \calo^n_K | x \otimes 1 \in \image U \},
\]
then
\begin{align*}
&|\ker U \otimes (\calo_L / \pi^r\calo_L) | = | \coker U \otimes
(\calo_L/ \pi^r\calo_L) | \\
\le & | \ker ( \Lambda \otimes (\calo_K / \pi^r\calo_K) \to
(\calo_K / \pi^r\calo_K)^{n}|^{[L:K]}\\
= &|\coker (\Lambda \otimes (\calo_K/\pi^r\calo_K) \to
(\calo_K/\pi^r\calo_K)^n|^{[L:K]}
\end{align*}
\end{enumerate}
\end{lemma}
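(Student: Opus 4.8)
The plan is to reduce everything to the structure theory of finitely generated modules over a principal ideal domain (namely $\calo_K$ and $\calo_L$). Begin with part (1). Since $T\colon \calo_K^n \to \calo_K^n$ is injective, the Smith normal form over the PID $\calo_K$ lets me choose bases in which $T$ is diagonal with entries $\pi^{e_1},\dots,\pi^{e_n}$ for some $e_i \ge 0$. Then $\coker T \cong \bigoplus_i \calo_K/\pi^{e_i}\calo_K$, and tensoring with $\calo_K/\pi^r\calo_K$ gives $\coker(T\otimes \calo_K/\pi^r\calo_K) \cong \bigoplus_i \calo_K/\pi^{\min(e_i,r)}\calo_K$, so its cardinality is $q_K^{\sum_i \min(e_i,r)}$ where $q_K = |\calo_K/\pi\calo_K|$. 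The kernel of $T\otimes\calo_K/\pi^r\calo_K$ has the same cardinality (the diagonal block $\calo_K/\pi^r\calo_K \xrightarrow{\pi^{e_i}} \calo_K/\pi^r\calo_K$ has kernel and cokernel both of order $q_K^{\min(e_i,r)}$). Now base-change to $\calo_L$: the same diagonal matrix presents $T\otimes\calo_L$, and $\calo_L/\pi\calo_L$ is an extension of $\calo_K/\pi\calo_K$ — but one must be careful, since $\pi$ need not be a uniformizer of $L$ if the extension is ramified. The clean way around this is to note $|\calo_L/\pi^r\calo_L| = |\calo_K/\pi^r\calo_K|^{[L:K]}$ (both sides equal $q_K^{r[L:K]}$, using that $\calo_L$ is free of rank $[L:K]$ over $\calo_K$), and more generally $|\calo_L/\pi^m\calo_L| = |\calo_K/\pi^m\calo_K|^{[L:K]}$ for every $m$. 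Applying this to each diagonal block $\calo_L/\pi^r\calo_L \xrightarrow{\pi^{e_i}}\calo_L/\pi^r\calo_L$, whose kernel and cokernel have order $|\calo_L/\pi^{\min(e_i,r)}\calo_L| = q_K^{[L:K]\min(e_i,r)}$, and summing over $i$, yields all four quantities in part (1).

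For part (2), the first equality $|\ker U\otimes \calo_L/\pi^r\calo_L| = |\coker U\otimes\calo_L/\pi^r\calo_L|$ is part (1) applied over $L$ (it holds for any injective $U$ over a discrete valuation ring, by the Smith-form argument above, without reference to $K$). The substantive point is the inequality relating $\coker U$ to $\coker(\Lambda\otimes\calo_K/\pi^r\calo_K \to (\calo_K/\pi^r\calo_K)^n)$. The idea is that $\Lambda = \{x \in \calo_K^n : x\otimes 1 \in \image U\}$ is the "$K$-rational part" of $\image U$; the inclusion $\Lambda \hookrightarrow \calo_K^n$ is injective with $\Lambda$ free of rank $n$ (it is a full-rank sublattice, since $\image U$ is full-rank in $\calo_L^n$ and $\calo_K^n$ is a direct summand of $\calo_L^n$ as $\calo_K$-module). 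Put $\Lambda$ in Smith form relative to $\calo_K^n$: the inclusion is diagonal with entries $\pi^{f_1},\dots,\pi^{f_n}$. Then by part (1)'s computation, $|\coker(\Lambda\otimes\calo_K/\pi^r\calo_K \to (\calo_K/\pi^r\calo_K)^n)|^{[L:K]} = q_K^{[L:K]\sum_i \min(f_i,r)}$. On the other side, $\image U \supseteq \Lambda\otimes_{\calo_K}\calo_L$ as sublattices of $\calo_L^n$, because every generator of $\Lambda$ lands in $\image U$ by definition, hence so does its $\calo_L$-span; therefore $\coker U$ is a quotient of $\calo_L^n / (\Lambda\otimes\calo_L)$, giving $|\coker(U\otimes\calo_L/\pi^r\calo_L)| \le |(\calo_L^n/(\Lambda\otimes\calo_L))\otimes\calo_L/\pi^r\calo_L|$. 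The right side equals $|\coker(\Lambda\otimes\calo_L/\pi^r\calo_L \to (\calo_L/\pi^r\calo_L)^n)| = q_K^{[L:K]\sum_i\min(f_i,r)}$ by part (1) over $\calo_K$ applied to the diagonal map with entries $\pi^{f_i}$. This is exactly the claimed bound.

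I expect the main obstacle to be bookkeeping around ramification, i.e. the fact that $\pi$ (a uniformizer of $K$) is generally not a uniformizer of $L$, so that "$\calo_L/\pi^r\calo_L$" is not simply $\bbf_{q_L}[\![t]\!]/t^r$ — one cannot naively say its order is $q_L^r$. The resolution, used above, is the identity $|\calo_L/\pi^m\calo_L| = |\calo_K/\pi^m\calo_K|^{[L:K]}$, which follows from $\calo_L$ being a free $\calo_K$-module of rank $[L:K]$ (so $\calo_L/\pi^m\calo_L \cong (\calo_K/\pi^m\calo_K)^{[L:K]}$ as $\calo_K$-modules). Once this is in hand, every cardinality computation factors through Smith normal form over a PID and becomes elementary; I would state the free-module fact at the outset and then run the diagonalization argument uniformly for both parts. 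One should also remark, for part (2), why $\Lambda$ has full rank $n$: this is because $\image U$ contains $\pi^N\calo_L^n$ for $N$ large (as $U$ is injective over the DVR $\calo_L$, $\coker U$ is finite), hence $\Lambda$ contains $\pi^N\calo_K^n$, so $\Lambda\otimes_{\calo_K} K = K^n$.
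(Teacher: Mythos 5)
Your proof is correct and follows essentially the same strategy as the paper's. The one cosmetic difference is in part (1): you compute cardinalities explicitly by putting $T$ into Smith normal form with entries $\pi^{e_i}$ and then summing $\min(e_i,r)$ block by block, whereas the paper argues more structurally: it derives $|\ker| = |\coker|$ from the four-term exact sequence $0 \to D \to (\calo_K/\pi^r\calo_K)^n \to (\calo_K/\pi^r\calo_K)^n \to C \to 0$ (the two middle terms having equal finite order), and then gets the $[L{:}K]$-th power relation by tensoring the whole sequence with the free, hence flat, $\calo_K/\pi^r\calo_K$-module $\calo_L/\pi^r\calo_L$. Both routes hinge on the same underlying fact — $\calo_L$ free of rank $[L{:}K]$ over $\calo_K$, so $|\calo_L/\pi^m\calo_L| = |\calo_K/\pi^m\calo_K|^{[L:K]}$ — and your Smith-form bookkeeping is a perfectly valid substitute for the exact-sequence manipulation. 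For part (2), your key observation that $\image U \supseteq \Lambda \otimes_{\calo_K} \calo_L$ as $\calo_L$-sublattices of $\calo_L^n$, so that $\coker U$ is a quotient of $\calo_L^n/(\Lambda\otimes\calo_L)$ and the inequality follows by right-exactness of $\otimes\,\calo_L/\pi^r\calo_L$, is precisely the paper's observation that the inclusion $\Lambda\hookrightarrow\calo_K^n$ base-changed to $\calo_L$ factors through $\image U\hookrightarrow\calo_L^n$, forcing the cokernel of the former to be at least as large. Your remark justifying that $\Lambda$ has full rank (via $\pi^N\calo_K^n \subseteq \Lambda$ for $N$ large) is a useful addition the paper leaves implicit.
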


\begin{proof}
If $D$ and $C$ denote the kernel and cokernel of $T\otimes
(\calo_K/\pi^r\calo_K)$, respectively, then $|D|=|C|$ since the
two middle terms in
\[
0 \to D \to (\calo_K/\pi^r\calo_K)^n\to (\calo_K / \pi^r\calo_K)^n
\to C \to 0
\]
have equal order.

Now, as  $\calo_L$ is free of rank $[L:K]$ over $\calo_K$,
$\calo_L / \pi^r\calo_L$ is free over $\calo_K/\pi^r\calo_K$,
tensoring with it we obtained
\[
0 \to D \otimes (\calo_L/\pi^r\calo_L) \to (\calo_L
/\pi^r\calo_L)^n \to (\calo_L/\pi^r\calo_L)^n \to C\otimes
(\calo_L/\pi^r\calo_L)\to 0
\]
So,
\[
|D\otimes (\calo_L/\pi^r \calo_L)| = |D|^{[L:K]} = |C|^{[L:K]} =
|C\otimes (\calo_L/\pi^r\calo_L)|
\]
which gives (i).

For (ii), let $I$ denote the image of $U$.  Let $S$ and $T$ denote the
inclusion maps $I\hookrightarrow\calo_L^n$ and
$\Lambda\hookrightarrow \calo_K^n$.
As $T\otimes_{\calo_K}\calo_L$ factors through $S$,  the
image of $T\otimes_{\calo_K}(\calo_L/\pi^r\calo_L)$ is contained in
the image of $S\otimes_{\calo_L}(\calo_L/\pi^r\calo_L)$.  It follows that
$$|\coker T\otimes_{\calo_K}(\calo_L/\pi^r\calo_L)| \ge
|\coker S\otimes_{\calo_L}(\calo_L/\pi^r\calo_L)|.$$
We conclude by applying part (i) to $T$.
\end{proof}

\bigskip
\noindent\emph{Proof of Theorem~\ref{SL1}.} \ By Proposition~\ref{ssimp-bound},
$\rho\left( G'(K)\right)\ge \frac 2d$.  It suffices, therefore,
to prove only the upper bound.

Let us start by reviewing the ``orbit method" classifying the
representations of uniform pro-$p$ groups.  Recall that a
torsion-free pro-$p$ group $U$ is called \emph{uniform} if
$U^p\supseteq [U, U] \; \,(U^4\supset [U, U]$ if $p=2$). If $L$ is
the Lie $\bbz_p$-ring of $U$ (see \cite[\S8.2]{DDMS}) then the
irreducible representations of $U$ are in one-to-one
correspondence with the orbits of homomorphisms $\vp:(L, +) \to
\mu_{p^\infty}$ (where $\mu_{p^\infty}$ is the group of $p$-power
roots of unity).  By orbits here we mean orbits under the adjoint
action of $U$ on $L$. Given such $\vp$, with orbit $[\vp] $, then
the dimension of the corresponding representation is
$|[\vp]|^{1/2}$.  (For a detailed description see \cite{Ho} and
\cite{Ja2}.)

We can be more concrete in the setting of interest for the theorem.
Let $\calo$ be
the ring of integers of $K, \pi$ a uniformizer of $\calo$,
$D_0$ the maximal $\calo$-order of $D$ (which consists of all the
elements of $D$ whose reduced norm is in $\calo$), and $L$ the
subspace of all the elements of $D_0$ of reduced trace $0$.
The map $x \mapsto \exp (px)$ from $L$ to $D$ takes $L$ into a
uniform open subgroup of $\SL_1(D)$ which we will call $U$, whose
Lie ring is $L$.  Our goal is to prove that $\rho (U) \le \frac
r\kappa \,= \,\frac 2 d$.

By the orbit method described above, we have to classify the
characters of $L$. Let
$$L^* = \{ x \in D\mid \Trd_{D/K}(x) = 0\mbox{ and }
\Tr_{K/\bbq_p} \Trd_{D/K} (x L) \subseteq \bbz_p\}.$$

Given a pair $(x, k)$ with $x \in L^*$ and $k \in \bbn$, the map
\[L\to \bbz_p \to \bbz_p/p^k\bbz_p \to \mu_{p^k} \to
\mu_{p^\infty}\]

\[ y \mapsto m = \Tr_{K/\bbq_p} (\Trd_{D/K} (xy))\mapsto m \pmod{p^k}
\mapsto e^{\frac{2\pi i}{p^k} m}\] is a character of $L$.
As the Killing form is non-degenerate all characters are obtained
in this way.  There are, though, two types of repetition.

\bigskip

\begin{enumerate}
\item The pairs $(x, k)$ and $(px, k+1)$ induce the same character.
\item If $x_1 \equiv x_2 \pmod{p^k}$ in $L^*$ then
$(x_1, k)$ and $(x_2, k)$ induce the same character.

\vskip .1in
We should also consider a third kind of equivalence among pairs
$(x, k)$:
\vskip .1in

\item For every $u\in U$, $(x, k) \sim (x^u, k)$,
where $x^u$ is the image of $x$ under the conjugation
action of $u$.

\end{enumerate}

We will denote the equivalence class of $(x, k)$ by $[x, k]$.  So,
there is a one-to-one correspondence between the irreducible
representations of $U$ and the equivalence classes $[x, k]$.  The
representation space associated to $[x, k]$ will be denoted by
$V_{[x, k]}$.  Note that the equivalences (i) and (ii) preserve
character, while (iii) varies character within an equivalence class.
We will denote by $|[x, k]|$ the
number of characters associated with the equivalence class
$[x,k]$.  The orbit method implies that
\begin{equation}
\label{orbit-dimension}
\dim V_{[x, k]} = |[x,k]|^{1/2}.
\end{equation}

Note that $L^*$ contains $L$ as a subgroup of finite index,
so by equivalence (i)
we can always assume that $x \in L$. The size of the orbit
$|[x,k]|$ is equal to the index of the centralizer of $\exp(px)$
in $U/U(p^k)$
where $U(p^k)=\exp(p^{k+1}L)$.

The division algebra $D$ has finitely many maximal subfields $F_1,
\dots, F_c$ such that every $x \in D$ is conjugate to one of them
in $D$.  This implies by (iii) and by the fact that $U$ is mapped
onto a finite index subgroup of $D^*/Z(D^*)$, that we can sum up
only on $x \in F_i, i = 1, \dots, c$.  It therefore suffices to
treat the contribution of a single $F=F_i$.

The intersection $F\cap D_0$ is an order in $F$, and in
establishing an upper bound, we may count all $x \in \calo_F$ with
$\Tr_{F/K} (x) = 0$, where $\calo_F$ is the ring of integers of
$F$. Denote by $\pi$ a uniformizer of $\calo_F$.

Let $\iota_1, \dots, \iota_d$ denote the embeddings of $F$ into
$\overline\bbq_p$---a fixed algebraic closure of $\bbq_p$.
Let us denote by $\Phi$ the root system of type $A_{d-1}$ given
via the standard basis $\{e_1,\dots, e_d\}$ of $\bbr^d$ as
$$\Phi = \{e_s - e_t\mid 1 \le s,t \le d,\, s \neq t\}.$$
Given the pair $(x, k)$ as before, let
\[
\Psi_i(x, k) = \{ e_s - e_t \in \Phi\mid
p^k|\pi^i(\iota_s(x) - \iota_t(x))\}
\]
One can check that $\Psi_i(x, k)$ depends only on the equivalence
class $[x, k]$, so we will denote it $\Psi_i[x, k]$. This is an
increasing sequence
$$\Psi_1[x, k] \subseteq \dots \subseteq \Psi_\ell [x, k] = \Phi$$
of root subsystems of $\Phi$, where $\ell = ke$ and $e$ is
the ramification degree of $K$ over $\bbq_p$.

The formal similarity with (\ref{systemchain}) deserves explanation
or at least comment.  The key is surely to be found in comparing
the way in which the orbit method works in the two settings,
$p$-adic analytic and compact real Lie groups.  In the latter
case, we can view dominant weights as integral $W$-orbits in
$\mathfrak{t}^*$, where $\mathfrak{t}$ is a Cartan subalgebra
of the Lie algebra.  In each setting, the chain of root systems
$\Psi_i$ characterizes the distances of a linear functional
on a Cartan subalgebra to the walls of the Weyl chambers.  To say
more, we would need a unified way of viewing the Weyl dimension
formula and (\ref{orbit-dimension}).  We do not know of such a way,
but the similarities between the two proofs cry out for a unified
treatment.

\begin{claim1*}We have
\[
\dim V_{[x, k]} = \exp_q\left( \Bigl(\suml^\ell_{i = 1} | \Phi^+
\setminus \Psi_i[x, k]|\Bigr) - |\Phi^+|\right)\] where for $y \in \bbr,
\; \exp_q (y) = q^y$.
\end{claim1*}
\begin{proof}
$\dim V_{[x, k]} =$(Index of centralizer of  $x$ acting on
 $U/U(p^k))^{1/2} =$
\begin{align*}
&\mathop{=}\limits^{(1)} \qquad \qquad \exp_q
\half\left(\suml_{1\le s \neq t \le d} (\min\{i|e_s-e_t \in
\Psi_i[x, k]\}-1)\right)\\
&\mathop{=}\limits^{(2)} \qquad \qquad \exp_q\half \,\left(
(\suml^\ell_{i=1} |
\Phi\setminus\Psi_i[x, k]|)-|\Phi|\right)\\
&\mathop{=}\limits^{(3)} \qquad \qquad \exp_q \, \left(
(\suml^\ell_{i=1} |\Phi^+ \setminus \Psi^+_i [x, k]|) -
|\Phi^+|\right)
\end{align*}
Let us justify these equalities:   (2) just reverses the order of
summation between $i$ and $(s,t)$, while
(3) follows from the central symmetry of
all root systems involved.  Equality (1) needs two remarks: First, as
$x$-set: $U/U(p^k)$ is isomorphic to $L/p^k L$ via the map
$\lambda \in L\to \exp (p\lambda)$ which satisfies $\exp (x^{-1}
(p\lambda) x) = x^{-1} \exp(p\lambda) x$.

Secondly, the size of the centralizer of $x$ acting on $U/U(p^k)$
is therefore the same as the kernel of $ad(x) - \mathrm{Id}$ acting on
$L/p^kL$. Lemma~\ref{base-change} implies that this kernel can be computed in a
suitable extension which contains all the eigenvalues $\iota_s(x),
1 \le s \le d$. Then equality (1) becomes clear.
\end{proof}

We now turn to the question: how many representations (i.e.,
equivalence classes $[x, k]$) give rise to a specified sequence
\begin{equation*}  (\Psi_1 \le \cdots \le \Psi_\ell = \Phi).
\end{equation*}
The reader may note here that the structure of the proof is
similar to that of Theorem~\ref{Archimedean}.  There is, however,
a minor difference at this point.  In the former proof every increasing chain
actually occurs for some $\lambda$.
Here, typically, many chains will not occur at all.
This does not matter because at this point
we are interested only in upper bounds.

\begin{claim2*}
The number of equivalence classes $[x, k]$ giving rise to a
sequence $\Psi_1\le \dots \le \Psi_\ell = \Phi$ is bounded above
by $\exp_q\Big(\suml^\ell_{i = 1} \big((d-1) -
\rk\Psi_i\big)\Big)$
\end{claim2*}
\begin{proof}
By Lemma~\ref{base-change} we may assume that all the eigenvalues
of $x$ are in $\calo$.

We know that $\Psi_\ell [x, k] = \Phi$ and the question is for how
many $x$'s (counted $\mod p^k$),  $\; \, \Psi_{\ell - 1} [x, k] =
\Psi_{\ell - 1}$ etc.

For a fixed reduction of $x \pmod {\pi^{i-1}}$ we would like to
count the number of possibilities modulo $\pi^i$.  This means
counting vectors $\left(\iota_1(x), \dots, \iota_d(x)\right)$
subject to two additional conditions:

\begin{enumerate}

\item trace $= \suml^d_{j=1} \iota_j(x) = 0$
\item $\iota_s(x) \equiv \iota_t(x)\pmod {\pi^i}$ for all
$(s,t)$ such that $e_s-e_t \in \Psi_{\ell - i}$

\end{enumerate}

\bigskip

The dimension of the affine space of solutions to (ii) compatible
with the specified reduction of $x$ (mod $\pi^{i-1}$) equals $d-\rk
\Psi_{\ell - i}$ and condition (i) leads to $(d-1) - \rk
\Psi_{\ell-i}$. This proves claim 2.

\medskip

Theorem~\ref{SL1} follows now from claims 1 and 2 by a
computation identical to that carried out in the proof
of Theorem~\ref{Archimedean}.
\end{proof}

We can prove Theorem~\ref{SL1} only for characteristic $0$ where the
orbit method is available.  For quaternion algebras, however, we can
prove the analogous theorem in odd characteristic as well.

\begin{theorem}
\label{not-char-2}
Let $K$ be a non-archimedean local field not of characteristic
$2$, $D$ a central simple algebra over $K$ of degree $2$, $D_0$ a
maximal order of $K$, and $\Nrd\colon D^\times\to K^\times$ the
reduced norm.  Then
$$\rho(D_0^\times\cap\ker\Nrd)=1.$$
\end{theorem}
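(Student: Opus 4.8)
The plan is to reduce, as far as possible, to the characteristic-zero case handled in Theorem~\ref{SL1}, and where that is impossible, to substitute an elementary direct analysis exploiting the fact that for a quaternion algebra the absolute root system is just $A_1$, so $r/\kappa = 2/d = 1$. First I would note that $\rho(D_0^\times\cap\ker\Nrd)\ge 1$ is already given: this group is an open compact subgroup of $\SL_1(D)(K)$, which is $K$-anisotropic but has absolute rank $1$ and $|\Phi^+|=1$, so Proposition~\ref{ssimp-bound} yields $\rho\ge r/\kappa = 1$. (When $\mathrm{char}(K)=0$ this also follows from Theorem~\ref{SL1}, but Proposition~\ref{ssimp-bound} makes no characteristic assumption.) So the entire content is the upper bound $\rho\le 1$, and only the case $\mathrm{char}(K)=p>2$ requires new work, since $\mathrm{char}(K)=0$ is Theorem~\ref{SL1}.

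For the upper bound in odd positive characteristic, I would work directly with the group $U = D_0^\times\cap\ker\Nrd$ and its congruence filtration $U_k = \ker(U\to (D_0/\pi^k D_0)^\times)$, where $\pi$ is a uniformizer of $\calo_K$ and $q = |\calo_K/\pi|$. Since $D$ has degree $2$, $[U:U_k]$ is comparable to $q^{3k}$ (as $\dim\bg = 3$). The strategy is to classify the irreducible representations of $U$ by restricting them to the pieces $U_k$: an irreducible representation of $U$ factors through $U/U_{2k}$ for appropriate $k$ and, by Clifford theory applied to the filtration, its restriction to the abelian layer $U_k/U_{2k}$ is governed by a character of that layer, whose $U$-orbit size is the square of the dimension. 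Concretely, $U_k/U_{2k}$ is identified (via $1+\pi^k y\mapsto y$) with the trace-zero part $L$ of $D_0/\pi^k D_0$, the conjugation action of $U$ is the adjoint action, and the pairing with characters is the reduced-trace form. One is then reduced to counting, for each $k$, the $U$-orbits on $\Hom(L/\pi^k L,\mu)$ weighted by (orbit size)$^{-s/2}$, exactly the combinatorial problem solved by Claims 1 and 2 in the proof of Theorem~\ref{SL1}, but now with $\Phi$ the rank-one system $A_1$.

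The key simplification is that with $\Phi$ of type $A_1$ there is a single root pair $\{e_1-e_2, e_2-e_1\}$, so the chain of root subsystems $\Psi_i[x,k]$ has only the two possible terms $\emptyset$ and $\Phi$, and the single relevant invariant of the class $[x,k]$ is the integer $j = j(x,k)$ measuring the $\pi$-adic distance between the two eigenvalues $\iota_1(x),\iota_2(x)$ (equivalently, the valuation of $\iota_1(x)-\iota_2(x)$, cut off at level $k$). Claim 1 then says $\dim V_{[x,k]}$ is comparable to $q^{j}$, while Claim 2 says the number of classes with a given value of $j$ is $O(q^{j})$: indeed, fixing the common reduction forces one eigenvalue once the difference is pinned down to the appropriate level, and the trace-zero condition removes the remaining degree of freedom. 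Summing, the contribution to $\calz_U(s)$ from classes with invariant $j$ is $O(q^{j})\cdot q^{-js} = O(q^{j(1-s)})$, which is summable over $j$ precisely when $s>1$; hence $\rho(U)\le 1$, completing the proof. The main obstacle — and the reason this is stated only for degree $2$ and not in general — is that in positive characteristic the exponential map and the Kirillov orbit method are unavailable, so the identification of $U_k/U_{2k}$ with an additive group carrying the adjoint action, and the reduction of dimension-counting to the reduced-trace pairing, must be carried out by hand via explicit commutator computations in $D_0$; the restriction to characteristic $\ne 2$ and to degree $2$ is exactly what keeps those computations (the quadratic relation $x^2 = \Trd(x)x - \Nrd(x)$, and the exact sequence relating $U_k/U_{2k}$ to $L/\pi^k L$) manageable.
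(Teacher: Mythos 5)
Your outline matches the paper's strategy in broad strokes: reduce to the upper bound, work with the congruence layers $U_k/U_{2k}$, use Clifford theory, and transfer the counting to the trace-zero Lie ring $L$ via the map $1+\pi^k y\mapsto y$ and the reduced-trace pairing. You also correctly locate the difficulty in the unavailability of the orbit method in positive characteristic. But the proposal leaves unproved exactly the two technical cores that constitute the paper's argument, and these are not routine.

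First, the ``identification'' of $U_k/U_{2k}$ with $L/\pi^k L$ is only a commensurability: the norm-one condition on the group side and the trace-zero condition on the Lie side cut out subgroups of $D_0/\pi^k D_0$ that agree merely up to bounded index inside $\ker T_{n-m}$, and establishing even this requires showing $\Nrd\colon H_r\to U_r$ has bounded cokernel (via $\Trd(D_0)\supset \pi^{c_1}\O$ and completeness) together with a snake-lemma comparison to the trace map. Second, and more seriously, the substitute for Claim 1 of Theorem~\ref{SL1} --- a lower bound on the orbit of a primitive character, i.e.\ an upper bound on the conjugation stabilizer of a primitive $x_0\in L/\pi^r L$ --- is not a consequence of the $A_1$ root-system combinatorics you invoke. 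The paper proves it by a compactness argument (the set of pairs $(x,y)$ with $x$ trace-zero primitive and $y\notin\Span_{\O}\{1,x\}+\pi D_0$ has commutator bounded away from zero) and then counts solutions of $u^2+\Nrd(x_0)v^2=1$ modulo $\pi^{r-c_4}$, which is where $\char(K)\neq 2$ actually enters, through $u_1\equiv\pm u_2 \pmod{\pi^{r-c_4-c_5}}$. Your proposal names the obstacle (``explicit commutator computations'') without carrying it out, and your stratification by the eigenvalue-separation invariant $j$ is not the one the paper uses --- it stratifies by level $n$ and proves a uniform stabilizer bound for all primitive characters at that level. So: right plan, correct diagnosis of obstacles, but the stabilizer estimate, which is the heart of the argument, is missing.
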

\begin{proof}
As in the proof of Theorem~\ref{SL1}, we only need
to establish the upper bound.
Lacking both a logarithm map and a satisfactory version of the
orbit method in general, we develop a crude substitute for each.

Let $\O$ be the ring of integers in $K$, $\pi$ a uniformizer, and
$q$ the order of the residue field of $\O$.  Let
$$U_i = 1+\pi^i \O\subset \O^\times,$$
$$H_i = 1+\pi^i D_0\subset D_0^\times,$$
and
$$G_i = \ker N_i,$$
where $N_i\colon H_i\to U_i$ is the restriction of $\Nrd$.
If $m < n\le 2m$, there are natural isomorphisms
$$e^{\O}_{m,n}\colon \O/\pi^{n-m}\O\to U_m/U_n$$
and
$$e^{D}_{m,n}\colon D_0/\pi^{n-m}D_0\to H_m/H_n$$
defined by
$$e_{m,n}^{\O}(a+\pi^{n-m}\O) = (1+a\pi^m)U_n$$
and
$$e_{m,n}^{D}(a+\pi^{n-m}D_0) = (1+a\pi^m)H_n.$$
In particular, $|U_m/U_{m+1}|=q$ and $|H_m/H_{m+1}|=q^4$ and
therefore
$$|U_m/U_n|=q^{n-m},\ |H_m/H_n|=q^{4(n-m)}.$$
The identity
$$\Nrd(1+a)= (1+a)(1+\bar a) = 1+\Trd(a)+\Nrd(a)$$
implies that the diagram
\begin{equation}
\label{e-iso}
\xymatrix{D_0/\pi^{n-m}D_0\ar[d]_{T_{n-m}}\ar[r]^{e^D_{m,n}}&H_m/H_n\ar[d]^{N_{m,n}}\\
\O/\pi^{n-m}\O\ar[r]^{e^{\O}_{m,n}}&U_m/U_n}
\end{equation}
commutes, where $T_{n-m}$ and $N_{m,n}$ denote the maps induced by
$\Trd$ and $\Nrd$ respectively.

Let $F\subset D$ be a separable quadratic extension of $K$. Then
$$\Trd(F)=\mathrm{Tr}_{F/K}(F)=K,$$
and $D_0\cap F$ is an open subring of $F$, so $\Trd(D_0)\supset\Trd(D_0\cap F)$
contains an open subgroup of $\O$. If $\pi^{c_1}\O\subset
\Trd(D_0)$, then $\pi^{r+c_1}\O\subset \Trd(\pi^r D_0)$ for all
non-negative integers $r$.  This implies that if $r>c_1$,
$$(\Nrd(1+\pi^r D_0)\cap U_{r+c_1})U_{r+c_1+1} = U_{r+c_1}$$
and therefore by the completeness of $K$ that
\begin{equation}
\label{open-norm}
\Nrd(H_r)\supset U_{r+c_1}
\end{equation}
for all $r>c_1$.  This implies $|\coker N_r| < c_2$ for some
constant $c_2$ independent of $r$.

Let $L$ denote the
Lie ring of elements of reduced trace $0$ in $D_0$.
Applying the snake lemma to
\begin{equation*}
\xymatrix{0\ar[r]&\pi^{n-m}D_0\ar[r]\ar[d]&D_0\ar[r]\ar[d]
    &D_0/\pi^{n-m}D_0\ar[r]\ar[d]^{T_{n-m}}&0\\
0\ar[r]&\pi^{n-m}\O\ar[r]&\O\ar[r]&\O/\pi^{n-m}\O\ar[r]&0\rlap{,}}
\end{equation*}
we see that
$$\frac{|\ker T_{n-m}|}{|L/\pi^{n-m}L|} \le c_3$$
and
$$|\coker T_{n-m}|\le c_3$$
for some constant $c_3$.  When $m\le n\le 2m$, we therefore obtain
that
$$|\coker N_{m,n}| \le c_3.$$

This inequality allows us to estimate $|G_1/G_n|$. Thus,
$$q^{3(n-m)}\le |\ker N_{m,n}|=\frac{|H_m/H_n||\coker N_{m,n}|}{|U_m/U_n|}
\le c_3q^{3(n-m)}.$$
The snake lemma gives
$$1\to G_m/G_n\to\ker N_{m,n}\to \coker N_n$$
and therefore
$$\frac{|\ker N_{m,n}|}{|\coker N_n|}\le |G_m/G_n|
    = |\ker N_m/\ker N_n|\le |\ker N_{m,n}|\le c_3q^{3(n-m)}.$$
Applying this for $m=\lceil n/2\rceil$ and iterating, we get
\begin{equation}
\label{size} \log |G_1/G_m| = 3m\log q + O(\log m).
\end{equation}
Next we directly compare $G_m/G_n$ and $L/\pi^{n-m}L$ for
$c_1\le m < n\le 2m$. The diagram (\ref{e-iso}) determines an
$H_1$-equivariant isomorphism
$$\ker T_{n-m}\tilde\to \ker N_{m,n}.$$
We know that $L/\pi^{n-m}L$ and $G_m/G_n$ can each be realized
as a subgroup of index $\le c_3$ in $\ker N_{m,n}$.  There is a natural
relation between characters on the two subgroups according to
which a character on the first group corresponds to a character on
the second if each is the restriction of a common character on
$\ker N_{m,n}$. Each character on either subgroup extends to at least
$1$ and at most $c_3$ characters on $\ker N_{m,n}$. Each subgroup has a
natural filtration, one arising from the filtration of $G_m$ by
$G_{m+i}$ and the other from the filtration of $L$ by $\pi^i
L$.  These can be compared.  For our purposes it is enough to
note that the image of $G_{n-1}/G_n$ in $\ker N_{m,n}$ is
contained in the image of $\pi^{n-m-1}L/\pi^{n-m}L$ in $\ker
T_{n-m}$. It suffices to check that for every $a\in L$, we have
$\Nrd(1+\pi^{n-1}a)\in \Nrd(H_n)$.  This follows immediately from
(\ref{open-norm}) since
$\Nrd(1+\pi^{n-1}a) \in U_{2n-2}.$

Every continuous irreducible complex representation of $G_1$ is a
representation of $G_1/G_n$ for some minimal $n$ which we call the
\emph{level} of the representation.  We would like lower bounds
for the dimensions of representations $V$ of level $n\ge 2c_1$.
Let $m=\lceil n/2\rceil$, and consider the restriction of $V$ to
the normal abelian subgroup $G_m/G_n$ of $G_1/G_n$.  As
$V^{G_{n-1}/G_n}$ is a $G_1/G_n$-subrepresentation of $V$, it must
be trivial, since $V$ is of level $n$. Let $\chi_0$ denote a
character of $G_m/G_n$ appearing in the restriction of $V$ to
$G_m/G_n$.  Every character in the $G_1/G_n$-orbit of $\chi_0$ in
$\Hom(G_m/G_n,\C^\times)$ appears in this restriction, so the
dimension of $V$ is at least as large as the orbit of $\chi_0$,
which is a character non-trivial on $G_{n-1}/G_n$. To find a lower
bound for the size of this set, we need an upper bound on the size
of the stabilizer of $\chi_0$.

Let $\phi_0$ denote a character of $L/\pi^{n-m}L$ which
corresponds to $\chi_0$.  The orbit size of $\phi_0$ differs from
that of $\chi_0$ by at most a factor of $c_3$. As $\chi_0$ is
non-trivial on $G_{n-1}/G_n$, $\phi_0$ is non-trivial on
$\pi^{n-m-1}L/\pi^{n-m}L$. We therefore proceed by setting
$r=n-m$ and finding a lower bound for the size of the orbit of
$$\phi_0\in\Hom(L/\pi^r L,\C^\times)\setminus
    \Hom(L/\pi^{r-1} L,\C^\times).$$

To get this, we fix a character $\chi\colon K/\O\to\C^\times$ such
that $\chi(\pi^{-1})\neq 1$.  This gives a pairing
$$\langle a,b\rangle = \chi(\Trd(ab))$$
on $L\otimes K$.  Let $L^*$ denote the kernel of $L$,
i.e., the set of $b$ such that $\langle a,b\rangle = 1$ for all
$a\in L$. Thus $L^*$ is an $\O$-lattice in $L\otimes K$,
and $L\subset L^*$.  The map
$$a\mapsto \langle \pi^{-r}a,x\rangle$$
gives an isomorphism
$$L^*/\pi^r L^*\to \Hom(L/\pi^r L,\C^\times).$$
As $L$ and $L^*$ are commensurable, the minimum orbit size
of an element of $L^*/\pi^r L^*$ not divisible by
$\pi$ and an element of $L/\pi^r L$ not divisible by $\pi$
differ by a bounded factor.

We are therefore led to the question of estimating the size of the
stabilizer in $G_1/G_r$ of $x_0\in L/\pi^r L$ under the
hypothesis $\pi\nmid x_0$. We begin by analyzing the set
$$C_{x,r}=\{y\in D_0\mid xy-yx\in \pi^r D_0\}.$$
for a fixed $x$.  We see that the set $S$ of pairs
\begin{align*}
&\{(x,y)\mid x\in L\setminus \pi L,\ y\in D_0,\ y\notin
    \Span_K\{1,x\}+\pi D_0\} \\
=&\{(x,y)\mid x\in L\setminus \pi L,\ y\in D_0,\ y\notin
    \Span_{\O}\{1,x\}+\pi D_0\} \\
\end{align*}
is compact and contains no pair of commuting elements.  This is
because in characteristic $\neq 2$, any trace-zero element of
$M_2(\bar K)\supset D\supset D_0$ commutes only with linear
combinations of itself and the identity. By compactness, there
exists $c_4$ such that $xy-yx\notin \pi^{c_4}D_0$ for all $(x,y)\in
S$.  It follows that every $y\in B_{x,r}$ is congruent (mod
$\pi^{r-c_4}$) to an element of $\Span_K\{1,x\}$ and therefore
congruent (mod $\pi^{r-c_4}$) to an element of $\Span_{\O}\{1,x\}$.
We count the number of elements of $y\in B_{x_0,r}$ such that
$\Nrd(y)=1$ up to congruence (mod $\pi^{r-c_4}$).  Without loss of
generality, we may assume that $y=u+vx_0$, where $u,v\in \O$.  The
norm condition asserts
$$\Nrd(u+vx_0) = u^2 + \Nrd(x_0) v^2 = 1,$$
so we count the number of solutions $(u,v)$ of this equation in
the ring $\O/\pi^{r-c_4}\O$.  The solution set is the union of
solutions where $\pi\nmid u$ and solutions where $\pi\nmid v$. If
$(u_1,v)$ and $(u_2,v)$ are solutions of the first kind, then
$(u_1+u_2)(u_1-u_2) = 0$, and $(u_1+u_2)+(u_1-u_2) = 2u_1$. As $u_1$
is a unit, this implies that the g.c.d. of $u_1+u_2$ and
$u_1-u_2$ divides $2$.  If $2$
is $\pi^{c_5}$ times a unit, then
$$u_1\equiv \pm u_2 \pmod{\pi^{r-c_4-c_5}}.$$
This gives at most $2q^{c_5}q^{r-c_4}$ solutions where $\pi\nmid
u$. The same argument applies when $\pi\nmid v$ as long as
$\pi\mid \Nrd(x_0)$. If $\pi\nmid\Nrd(x_0)$, the same argument
applies with the roles of $u$ and $v$ exchanged.

We conclude that
$$|\Stab_{G_1/G_r}(x_0| \le q^{2c_4}(4q^{r+c_5-c_4}) = c_6 q^r.$$
From this bound and (\ref{size}), we can estimate the size of the
orbit $O(x_0)$:
$$\log|O(x_0)| = 2r\log q + O(\log r).$$
Thus, for all representations $V$ of level $n$,
\begin{equation}
\label{level-size} \log \dim V \ge 2r\log q+O(\log r).
\end{equation}
As $r = n-m = [n/2]$,
$$|G_1/G_n| = 6r\log q+O(\log r).$$
Since the sum of the squares of dimensions of all the
representations of $G_1/G_n$ is $|G_1/G_n|$, the number of
representations of level $n$ satisfies
\begin{equation}
\label{level-number} \log|\{V\mid \textrm{level}(V)=n\}| \le
2r\log q + O(\log r).
\end{equation}
Together, (\ref{level-size}) and (\ref{level-number}) imply the
theorem.
\end{proof}

\section{The $p$-local factor: isotropic groups}

Theorems~\ref{Archimedean},\ref{SL1}, \ref{not-char-2}, and \ref{SL2}
and Proposition~\ref{ssimp-bound}
strongly suggest that
$\frac r\kappa$ is always the abscissa of convergence.
Our work on the subject was dominated for quite a long time by an
effort to prove this.  It turns out though that all these examples
were misleading and in fact we have:
\begin{theorem}
\label{Isotropic}
If $K$ is a local
non-archimedean field, $\bg$ an \emph{isotropic}  simple $K$-group
(i.e., $\rk_K \G\ge 1$) and $U$ an open compact
subgroup of $\bg(K)$, then $\rho(U) \ge \frac1{15}$.
\end{theorem}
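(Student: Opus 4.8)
The plan is to separate the statement into a range where Proposition~\ref{ssimp-bound} already delivers the bound and a residual range that genuinely needs the isotropy hypothesis. Since $\rho(U)\ge r/\kappa=2/h$, where $h$ is the Coxeter number of $\bg$, Proposition~\ref{ssimp-bound} gives $\rho(U)\ge\frac1{15}$ as soon as $h\le 30$. Inspecting the classification of irreducible root systems, this covers every type except $A_n$ with $n\ge 30$, $B_n$ and $C_n$ with $n\ge 16$, and $D_n$ with $n\ge 17$; in particular it disposes of all the exceptional types, with $E_8$ sitting exactly on the boundary ($h=30$, $r/\kappa=\frac1{15}$). So from here on I would assume $\bg$ is of one of those classical types of large rank, and — using $\rk_K\bg\ge 1$ — aim for the stronger conclusion that $\rho(U)$ is bounded below by a fixed positive constant that is independent of the rank and comfortably exceeds $\frac1{15}$.

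For the remaining cases I would work along the congruence filtration of $U$. By commensurability of compact open subgroups (Corollary~\ref{Commensurable}) we may assume $U=\bg(\calo)$ for a smooth model, so that for $i\ge 1$ the quotient $U_i/U_{i+1}$ is an abelian group of order $q^{\dim\bg}$ on which $\bg$ over the residue field acts by the adjoint representation. A Clifford-theoretic analysis up this tower (which in characteristic $0$, after passing to a uniform subgroup, is exactly the Kirillov orbit method of \S7) attaches to each irreducible representation a ``level'' and a coadjoint-type orbit of a congruence quotient acting on the dual of such a layer, with the degree being the square root of the orbit size; to bound $\rho(U)$ from below it then suffices to produce, for each level $k$, at least $q^{\beta k}$ orbits of size at most $q^{2\alpha k}$, giving $\rho(U)\ge\beta/\alpha$. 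Isotropy enters by furnishing a proper parabolic $K$-subgroup $P=MN$: I would look for the desired orbits among the characters ``supported near'' the nilradical of $\mathrm{Lie}(P)$, i.e. attached to highly degenerate elements of $\mathfrak g(\calo)$ annihilated by a parabolic subalgebra. Such elements have large centralizers, so the orbits — and hence the degrees $q^{\alpha k}$ — are small; the essential new point, visible only when $\bg$ is $K$-isotropic, is that the $\bg(K)$-conjugates of such an element that still lie in $\mathfrak g(\calo)$ spread over exponentially many distinct $\bg(\calo)$-classes at each congruence level, the count being morally controlled (up to bounded factors) by the volume growth of balls in the Bruhat--Tits building of $\bg$ over $K$. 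In the argument this is not phrased via the building but carried out in matrix coordinates adapted to $P$; assembling the contributions over the levels and taking a uniform lower bound for $\beta/\alpha$ over the relevant classical types finishes the proof.

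The main obstacle — and exactly the reason $\frac1{15}$ is not claimed to be sharp — is the simultaneous control of the two competing exponents: the elements used must have centralizers large enough that the representation degrees $q^{\alpha k}$ stay small, while the family of them must remain spread over $q^{\beta k}$ genuinely distinct $\bg(\calo)$-classes, and one must choose $P$ and the degenerate elements so that the ratio $\beta/\alpha$ is controlled from below. For classical groups of large rank the abundance of parabolics and the positive-dimensional relative root groups leave plenty of room and keep $\beta/\alpha$ bounded away from $0$ uniformly in the rank; for the exceptional types, where $h$ is small, no such construction improves on Proposition~\ref{ssimp-bound}, so the worst case $E_8$ is forced to set the final constant at $\frac1{15}$. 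The remaining points — passing between $U$ and a smooth model or a uniform subgroup, identifying coadjoint orbits on the Lie layer with orbits on its dual lattice, and absorbing the boundedly many Weyl-group, $n!$, and lattice-index ambiguities that do not affect the exponential rates — are routine and handled as in \S\S6--7.
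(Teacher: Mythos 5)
Your reduction step is exactly the paper's: $\rho(U)\ge r/\kappa=2/h$ from Proposition~\ref{ssimp-bound} handles every type with Coxeter number $h\le 30$, in particular all the exceptional groups (with $E_8$ on the boundary), so only classical types of larger rank remain.

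For those, however, the mechanism you propose diverges from the paper's in a way that leaves a genuine gap. You want to run the orbit method (or a Clifford-theoretic substitute) and directly exhibit many \emph{small} coadjoint orbits, which forces you to use highly degenerate elements with large centralizers -- and then you are stuck with the tension you yourself flag: elements with very large centralizers tend to lie in \emph{few} distinct $\bg(\calo)$-classes, so it is unclear that one can have $\beta/\alpha$ bounded away from zero. You acknowledge this obstacle but do not resolve it, and as stated your plan does not yield the theorem. The paper sidesteps the dilemma entirely: it never invokes the orbit method in \S8. Instead it writes down an explicit family $M_Y = I + \pi^k\begin{pmatrix}X&0\\0&Z\end{pmatrix}+\begin{pmatrix}0&Y\\0&0\end{pmatrix}$ modulo $\pi^{3k+2t}$, where $\begin{pmatrix}X&0\\0&Z\end{pmatrix}$ is a fixed \emph{regular} diagonal element (not a degenerate one) and $Y$ ranges over the nilradical of a parabolic. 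All the $M_Y$ lie in a single $\bg(K)$-conjugacy class, but the block-upper-triangular conjugation analysis (steps (i)--(v)) shows they split into $\ge q^{(\lfloor(m+1)/2\rfloor\lfloor m/2\rfloor d^2 - md+1)k}$ distinct $U$-conjugacy classes. That lower bound on the \emph{total} number of conjugacy classes $\gamma(U)$ is then converted into a lower bound on $\rho(U)$ by the purely counting argument of Proposition~\ref{Crude-bound}: since $\sum_\pi(\dim\pi)^2 = |U/U_k|\approx q^{\delta k}$ and there are $\approx q^{\gamma k}$ irreducibles, at least half of them have degree $\le q^{(\delta-\gamma)k/2}$, giving $\rho(U)\ge 2\gamma/(\delta-\gamma)$. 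In other words, the paper gets small-degree representations by \emph{averaging} over all conjugacy classes produced from a regular anchor, rather than by engineering individual small orbits. Besides resolving your tension cleanly, this route is uniform in the residue characteristic (the orbit method of \S7 is used only in the anisotropic, characteristic-zero case, and the authors explicitly cannot extend it), whereas your Clifford-theoretic substitute remains unworked. Finally, the paper's bound is obtained by a case-by-case pass through Tits' list of classical isotropic forms with the block sizes adapted to the Witt index $\ix$; it is not a uniform rank cutoff, and the residual low-rank classical cases where the matrix estimate falls below $1/15$ are again handed to Proposition~\ref{ssimp-bound}.
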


\bigskip

\begin{remarks*}
\begin{enumerate}
\item If $r = \rk(\G)$
goes to infinity then $\frac r \kappa \to 0$, thus Theorem~\ref{Isotropic}
shows that $\frac r \kappa$ is usually not the abscissa of
convergence. It still may be the right answer for groups of low $K$-rank.

\item The difference between isotropic and anisotropic groups is
expressed by the fact that the first have non-trivial Bruhat-Tits
building.  The proof we give below does not refer to the
buildings, but it seems that a better combinatorial understanding
of them may lead to a more precise estimate on $\rho(U)$ (see \S
11 for more).

\end{enumerate}
\end{remarks*}
\begin{proof}

It suffices to treat the case of absolutely almost simply groups
over $K$. Moreover, it suffices to treat one representative from
every isogeny class. Tits  \cite{Ti} gives  a full description of
the classical absolutely simple groups  over  $K$. Note that in
proving our theorem we may ignore the groups of type $G_2$, $F_4$,
$E_6$, $E_7$ and $E_8$ as for these groups $\frac r\kappa \ge
\frac{1}{15}$, so our theorem follows from
Proposition~\ref{ssimp-bound}. Similarly, we can ignore forms of
${}^3D_4 $ and ${}^6D_4$.  All the rest are given in \cite[Table
II]{Ti} up to isogeny as groups of one of the following classical
forms:
\renewcommand{\theenumi}{\alph{enumi}}
\begin{enumerate}

\item $\SL_m(D)$ where $D$ is a central division algebra over $K$ of
degree $d$. These are inner forms of $A_n$ for $n = m d -1$ (and
we can assume $m \ge 2$ as we consider only isotropic groups).

\item $\SU_m(L, f)$ where $L$ is a quadratic extension of $K$ and $f$ is
a non-degenerate hermitian form of index $\ix\ge m/2-1$.  These are outer forms of
$A_{m-1}$.

\item $\SO_m(K,f)$, where $f$ is a quadratic form of index $\ix\ge m/2-2$.  These are inner
forms of $B_{(m-1)/2}$ if $m$ is odd, and they are forms
of  $D_{m/2}$ if $m$ is even, outer or inner according to whether $m/2-\ix$ is odd or even.

\item $\Sp_m(K)$, where $m$ is even.  These are the groups of type
$C_{m/2}$ and have index $\ix=m/2$.

\item $\SU_m(D,f)$, where $D$ is the quaternion algebra over $K$ and $f$ is a non-degenerate
antihermitian form of index $\ix \ge (m-1)/2$.  These are inner forms of $C_m$.

\item $\SU_m(D,f)$, where $D$ is the quaternion algebra over $K$ and $f$ is a non-degenerate
hermitian form of index $\ix \ge (m-3)/2$.  These are forms of $D_m$, outer or inner depending
on $m - 2\ix$.

\end{enumerate}
\renewcommand{\theenumi}{\roman{enumi}}

\medskip

We will start with case (a). For simplicity
we will assume first that $D = K$ and $m$ is even.  We
later remark how to modify the proof for the general case.

Let $X$ and $ Z$ be diagonal $\frac m2 \times \frac m2$ matrices
such that $\binom{X \; 0}{0 \; Z}$ is a diagonal matrix in
$\SL_m(\calo)$ which is regular and has trace 0. For some $t$, all
the diagonal entries are distinct (mod $\pi^t$).

We will consider the matrices $M_Y$
obtained by reducing $I + \pi^k \binom{X \; 0}{0 \; Z} + \binom{0 \; Y}{0 \; 0}$
modulo $\pi^{3k+2t}$,
where $X$ and $Z$  are fixed and $Y$
varies over the $q^{\frac {m^2}{4} (3k+2t)}$ possibilities (mod $\pi^{3k+2t}$).
Such matrices have determinant $1$ (mod $\pi^{2k}$), so assuming that
$k > t$, without sacrificing (mod $\pi^t$) regularity,
we can always modify $Z$ (mod $\pi^k$) to arrange that
$M_Y\in \SL_m(\calo/\pi^{3k+2t}\calo)$ for all $Y$

Assume two such matrices $M_Y $ and $M_{Y'}$ are conjugate.  This
means that there is an $m\times m$ matrix $\binom{A \; B}{C \; D}$,
where $A, B, C, D \in M_{\frac m2} (\calo/\pi^{3k+2t}\calo)$,
$\det{\binom{A \; B}{C \; D}} = 1$ and
\begin{align}
\label{block-conj}
\begin{pmatrix}A&B\\C&D\end{pmatrix}\; \;
\begin{pmatrix}\pi^kX&Y\\0&\pi^kZ\end{pmatrix}\,
= \, \begin{pmatrix}\pi^kX&Y'\\ 0&\pi^kZ\end{pmatrix}\;
\begin{pmatrix}A&B\\ C&D\end{pmatrix}
\end{align}

From $(\ref{block-conj})$ we can deduce:

\bigskip

\begin{enumerate}
\item Considering the lower left block,
$$CX \equiv ZC \pmod {\pi^{2k+2t}},$$
which implies $C\equiv 0 \pmod {\pi^{2k+t}}$ since
the difference of any diagonal entry of $X$ and any diagonal entry of
$Z$ cannot be divisible by $\pi^{t+1}$.

\item Considering the upper left block,
$$\pi^kAX \equiv \pi^k X A + Y'C \pmod {\pi^{3k+2t}}$$
From (i) we know that $C \equiv 0 \pmod{\pi^{2k+t}}$, so we get
$$AX \equiv XA \pmod{\pi^{k+t}}$$
which implies that $A$ is diagonal (mod $\pi^k$) since
the difference between two distinct diagonal entries of
$A$ cannot be divisible by $\pi^{t+1}$.

\item Considering the lower right
block,
$$CY + \pi^kDZ\equiv \pi^k ZD\pmod{\pi^{3k}},$$
and hence by (i),
$$DZ \equiv ZD \pmod{\pi^{k+t}}$$
and so $D$ is diagonal (mod $\pi^k$).

\item From (ii) and (iii) and $\det \begin{pmatrix}A&B\\C&D\end{pmatrix}=1$, we deduce that $A$ and $D$
are invertible (mod $\pi$) (since $C\equiv 0 \pmod{\pi}$) and hence
also (mod $\pi^k$).

\bigskip

\item The upper right corner now gives
\begin{equation*} AY \equiv Y' D \pmod{\pi^k}.
\end{equation*}
So,
\begin{equation*} AYD^{-1} \equiv Y' \pmod{\pi^k}.
\end{equation*}

\end{enumerate}

Let now
$$\tilde M = \{ M_{Y}\mid Y  \in M_{\frac m2} (\calo/\pi^{3k+2t}\calo)\}.$$
Choose out of $\tilde M$ a set $M$ of
$q^{\frac{m^2}{4}k}$ representatives for the different possible
images of $Y$ (mod $\pi^k$).

Assertions (ii)--(v) imply  that for a given such $M_Y\in M$, there are at most $ q^{(2\cdot \frac {m}{2} -1)k}
 = q^{(m-1)k}$ other elements of $M$ which are in the same
 conjugacy class.  This implies that $M$ has representatives of at
 least $q^{(\frac{m^2}{4} - m+1)k}$ different conjugacy classes.
 In particular, $\SL_m(\calo /\pi^{3k+2t} \calo)$ has at least
 $q^{(\frac{m^2}{4}-m+1)k}$ conjugacy classes.
 So we have proved that
 \[
 \g\left(\SL_m(\calo)\right)\ge \frac13(\frac{m^2}{4} - m + 1) =
 \frac{1}{12} (m^2-4m+4)
 \]

 Proposition~\ref{Crude-bound} now implies that
 \[
 \rho\left(\SL_m(\calo)\right) \ge \frac{\frac16
 (m^2-4m+4)}{m^2-1-\frac{1}{12} (m^2-4m+4)}
 \]

This proves the theorem for every $m\ge 6$ even. For $m=4$ we can
use Proposition~\ref{ssimp-bound}.

\medskip

If we replace $K$ by $D$ (and $m$ is still even) the proof works
as it stands (recall that $\SL_m(D)$ means the set of all $m \times m$
matrices over $D$ whose determinant, considered as an $md\times
md$ matrix over $\bar K$, is one.)  The only modification needed is
when computing dimensions:  the number of elements of $M$ is
$q^{\frac{m^2d^2}{4} k}$ and every element there can be conjugated
to at most $q^{(md-1)k}$ other elements of $M$ (since
$\rk(\SL_m(D)) = md -1$.)

We deduce that for an open subgroup $U$ of $\SL_m(D)$, $\g (U) \ge
\frac 13 (\frac 14 m^2d^2-md+1)$ and since $\dim_k(\SL_m(D)) =
m^2d^2-1$ we have
\[
\rho(U)\, \ge\, \frac{\frac 23 (\frac 14
m^2d^2-md+1)}{m^2d^2-1-\frac 13(\frac 14 m^2d^2-md+1)}
\]

This proves the theorem for $md\ge 6$.  For $md\le 4$, we can use
Proposition~\ref{ssimp-bound}.
\medskip

Finally, for general $m$ we will write $m$ as
$m=\left[\frac{m+1}{2}\right] \, + \, \left[\frac m2\right]$ and
in the proof we will work with blocks of sizes
$\left[\frac{m+1}{2}\right]$ and $\left[\frac m2\right]$. E.g.,
$X$ is an $\left[\frac{m+1}{2}\right]\times
\left[\frac{m+1}{2}\right]$ matrix, $Y$ is
$\left[\frac{m+1}{2}\right]\times \left[\frac m2\right], Z$ is
$\left[\frac m2\right]\times \left[\frac m2\right]$ etc. The proof
(for $K$ or $D$) carries over without any difficulty.  The size of
$M$ is then
$q^{\left[\frac{m+1}{2}\right] \, \left[\frac m2\right] d^2k}$,
and every element of it is conjugate to at most
$q^{(md-1)k}$ elements,  so
$$\g(U)\ge \frac 13
(\left[\frac{m+1}{2}\right]\left[\frac m2\right] d^2-md+1),$$
and
$$\rho(U) \ge \frac{\frac 23(\left[\frac{m+1}{2}\right]\left[\frac m2 \right]
d^2-md+1)}{m^2d^2-1-\frac 13(\frac 14m^2d^2-md+1)}.$$

This time we can assume $m\ge 3$.  For $m\ge 3$ odd, the only pairs $(m,d)$
for which this quantity is less than
$\frac 1{15}$ are $(3,1)$, $(3,2)$, and $(5,1)$.
For these cases we can use
Proposition~\ref{ssimp-bound}.

\bigskip

We now turn to groups of type (b)--(f), i.e. $\SU_m(D, f)$ where $D$ is
either $K, L$ or the quaternion algebra over $K$ and $f$
is a Hermitian of skew-Hermitian form on $W=D^m$.  By definition of index,
we can choose a basis
$$e_1,\ldots,e_{\ix},f_1,\ldots,f_{\ix},g_1,\ldots,g_s$$
with respect to which our form can be written in blocks of sizes
$\ix$, $\ix$, and $m-2\ix$ as follows:
$$
\begin{pmatrix}
0&I&0\\
\pm I&0&0\\
0&0&N
\end{pmatrix}
$$
(Note that if $m=2\ix$, the third block size is zero, so in fact we will have a $2\times 2$
block matrix.)

For fixed $X$ and $Z, x \times x$ matrices, we consider matrices
of the form
$$M_Y =  I_m +
\begin{pmatrix}
\pi^k X&Y&0\\
0&\pi^k Z&0\\
0&0&0
\end{pmatrix}.
$$
When $X=Z=0$, the condition on $Y$ for this matrix to lie in $\SU_m(D,f)$ is
$Y \pm \sigma(Y) = 0$, where $\sigma$ is
the involution (possibly trivial) defining the group.  The number of distinct possibilities
for $Y$ (mod $\pi^k$) is $q^{\ix^2 k}$, $q^{\frac{\ix^2-\ix}2 k}$, $q^{\frac{\ix^2+\ix}2 k}$,
$q^{(2\ix^2+\ix)k}$, and $q^{(2\ix^2-\ix)k}$ for cases (b), (c), (d), (e), and (f)
respectively.
By conjugation, we see that whenever $X$ and $Z$ are chosen so that
$M_0\in \SU_m(D,f)$, the number of possible values of $Y$ (mod $\pi^k$) for
which $M_Y\in\SU_m(D,f)$ is the same.
We will fix $X, Z$ to be diagonal matrices which have all $2\ix$ entries
distinct (mod $\pi^t$).  For some value of $t$
depending only on $m$ and the order $q$ of the residue field of $K$, we can always do this.

If $M_Y$ and $M_{Y'}$ are conjugate, we have a (mod $\pi^{3k+2t}$) equality:
$$
\begin{pmatrix}
A&B&C\\
D&E&F\\
G&H&J\\
\end{pmatrix}
\begin{pmatrix}
I_{\ix}+X&Y&0\\
0&I_{\ix}+Z&0\\
0&0&I_s\\
\end{pmatrix}
=
\begin{pmatrix}
I_{\ix}+X&Y'&0\\
0&I_{\ix}+Z&0\\
0&0&I_s\\
\end{pmatrix}
\begin{pmatrix}
A&B&C\\
D&E&F\\
G&H&J\\
\end{pmatrix}.
$$

Imitating the steps (i)--(v) above, we prove first that $D\equiv 0$ (mod $\pi^{2k+t}$) and
next that $A$ and $E$ are diagonal (mod $\pi^k$).  Finally, we conclude that
there are at least $q^n$ distinct conjugacy classes, where $n$ is
$\ix^2 k - 2\ix k$, $\frac{\ix^2-\ix}2 k - \ix k$, $\frac{\ix^2+\ix}2 k - \ix k$,
$(2\ix^2+\ix)k - 4\ix k$, and $(2\ix^2-\ix)k - 4\ix k$ for cases (b) through (f) respectively.  Using
the relation between $m$ and $\ix$, we conclude that
$$\rho(U)\ge
\begin{cases}
\frac{2\ix^2 - 4\ix}{(2\ix+2)^2-1- (\ix^2 - 2\ix)}&\text{in case (b)}\\
\frac{\ix^2-3\ix}{(2\ix+4)(2\ix+3)/2 - (\ix^2-3\ix)/2}&\text{in case (c)}\\
\frac{\ix^2-\ix}{2\ix(2\ix+1)/2-(\ix^2-\ix)/2}&\text{in case (d)}\\
\frac{4\ix^2+2\ix}{(4\ix+2)(4\ix+1)/2 - (2\ix^2+\ix)}&\text{in case (e)}\\
\frac{4\ix^2-2\ix}{(4\ix+6)(4\ix+5)/2 - (2\ix^2-\ix)}&\text{in case (f)}\\
\end{cases}
$$
In all cases, therefore, $\rho(U)\ge \frac{2\ix^2-6\ix}{3\ix^2+17\ix+12}$, so $\rho(U)>\frac 1{15}$
for $\ix\ge 5$.  For $\ix\le 4$, we have rank $\le 11$, and therefore
Coxeter number $\le 30$.  Thus, Proposition~\ref{ssimp-bound} covers
all these cases.

\end{proof}

\medskip

\begin{remarks*} Theorems~\ref{SL1} and \ref{Isotropic} show a dichotomy in the
asymptotic behavior of $\rho(U)$ between isotropic and anisotropic
groups. It should be noted, however, that the number $\rho(U)$
itself cannot distinguish between the two: For example, for a
quaternion algebra $D$ over $K$ of characteristic  zero $\rho
(\SL_1(D)) = 1$ and at the same time  $\rho(\SL_2(\calo)) = 1$ when
$\calo$ is the ring of integers of $K$.
\end{remarks*}

\section{Applications to general groups}

We can now apply the results of the previous section to prove the
following theorem:
\begin{theorem}
\label{Linear-bound} If $\Gamma $ is a finitely generated infinite
linear group over some field $F$ (or more generally, if $\Gamma$
is a finitely generated group with some homomorphism $\vp:\Gamma
\to \GL_n(F)$ with $\vp(\Gamma)$
 infinite) then $\rho(\Gamma) \ge \frac 1{15}$.

\end{theorem}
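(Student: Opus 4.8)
\emph{Proof strategy.} The plan is to use the good behaviour of $\rho$ under passage to finite-index subgroups and to quotients in order to reduce to a situation handled by Theorem~\ref{Isotropic}. Throughout we rely on two facts. First, by Corollary~\ref{Commensurable} (via Proposition~\ref{Res-Ind}), $\rho$ is unchanged on finite-index subgroups, including with the convention $\rho=\infty$. Second, if $\bar\Gamma$ is a quotient of $\Gamma$ then irreducible representations of $\bar\Gamma$ pull back injectively and dimension-preservingly to irreducible representations of $\Gamma$, so $r_n(\bar\Gamma)\le r_n(\Gamma)$ for all $n$ and hence $\rho(\bar\Gamma)\le\rho(\Gamma)$. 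Consequently we may replace $\Gamma$ by $\vp(\Gamma)$ and assume that $\Gamma\le\GL_n(F)$ is infinite, with $F$ finitely generated over its prime field.

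First I would dispose of the non-FAb case: if some finite-index subgroup $\Delta\le\Gamma$ has $\Delta/[\Delta,\Delta]$ infinite, then $\Delta$ has infinitely many one-dimensional complex representations, so $R_1(\Delta)=\infty$ and $\rho(\Gamma)=\rho(\Delta)=\infty\ge\tfrac1{15}$. So assume $\Gamma$ has FAb. A finitely generated solvable group with FAb is finite (induct on derived length, using that finite-index subgroups of finitely generated groups are finitely generated and that FAb passes to finite-index subgroups), so $\Gamma$ is not virtually solvable. Taking the Zariski closure $G$ of $\Gamma$ in $\GL_{n,\bar F}$ and replacing $\Gamma$ by $\Gamma_1=\Gamma\cap G^\circ$, the group $\Gamma_1$ is Zariski-dense in the connected group $G^\circ$, which — not being solvable — admits an absolutely simple quotient $S$. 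The image $\Delta$ of $\Gamma_1$ in $S$ is then a finitely generated, Zariski-dense (hence infinite, since $S$ is connected of positive dimension), FAb subgroup of $S(F_0)$ for some finitely generated field $F_0$, and $\Delta$ is a quotient of the finite-index subgroup $\Gamma_1$; so it suffices to prove $\rho(\Delta)\ge\tfrac1{15}$.

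Next I would arrange that $F_0$ is a \emph{global} field. Using the standard specialization lemma for finitely generated linear groups (as in Tits' proof of the Tits alternative), one specializes the finitely many parameters of $F_0$ — transcendentals to suitable rational values in characteristic $0$, and likewise down to transcendence degree one in characteristic $p$ (here $F_0$ cannot be finite, else $\Delta$ would be finite) — obtaining a Zariski-dense, hence still infinite, image of $\Delta$ in $S'(F_0')$ with $F_0'$ a number field or a global function field and $S'$ again absolutely simple; FAb survives since it passes to quotients. Rename these $\Delta$, $S$, $F_0$. Now invoke strong approximation for Zariski-dense subgroups (Matthews--Vaserstein--Weisfeiler, Nori, Weisfeiler, Pink): for a suitable integral model, for all but finitely many places $v$ of $F_0$ the closure of $\Delta$ in $S(\calo_v)$ is open, i.e.\ $\Delta$ embeds as a dense subgroup of an open compact subgroup $U_v$ of $S(F_{0,v})$. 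Since $S$ is absolutely simple over the global field $F_0$, it is quasi-split — hence isotropic — over $F_{0,v}$ for all but finitely many $v$; choose a $v$ meeting both requirements. Then $U_v$ is an open compact subgroup of the isotropic $K$-group $S$ with $K=F_{0,v}$, so Theorem~\ref{Isotropic} gives $\rho(U_v)\ge\tfrac1{15}$. Finally, since $\Delta$ is dense in $U_v$, restriction takes continuous irreducible complex representations of $U_v$ to irreducible representations of $\Delta$ of the same dimension, and non-isomorphic ones to non-isomorphic ones (two continuous representations of $U_v$ agreeing on a dense subgroup coincide, and a $\Delta$-intertwiner of continuous representations is automatically a $U_v$-intertwiner). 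Hence $r_n(\Delta)\ge r_n(U_v)$ for every $n$, so $\rho(\Delta)\ge\rho(U_v)\ge\tfrac1{15}$, and therefore $\rho(\Gamma)=\rho(\Gamma_1)\ge\rho(\Delta)\ge\tfrac1{15}$.

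The crux I expect is the combination of specialization to a global field with strong approximation for Zariski-dense subgroups, producing a \emph{dense} image in an open compact subgroup of an \emph{isotropic} local group. It is essential that the place $v$ can be chosen so that $S$ is isotropic over $F_{0,v}$: in the anisotropic local case Theorem~\ref{SL1} only yields $\rho=2/d$, which tends to $0$, so one genuinely needs that an absolutely simple group over a global field is isotropic at almost all places. The positive-characteristic book-keeping in the specialization and approximation steps also requires some care.
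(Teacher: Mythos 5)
Your proposal is correct in substance and follows the same overall skeleton as the paper: dispose of the virtually solvable case, reduce to a finitely generated Zariski-dense subgroup of a simple group over a global field (via specialization, citing essentially the lemma the paper takes from \cite{LaLu}), pick a place where the group is isotropic, identify an open compact subgroup of an almost simple local group in which the discrete group sits densely, and invoke Theorem~\ref{Isotropic} together with the observation that restriction along a dense embedding is injective and degree-preserving on irreducible continuous representations. The step where you note that $\rho$ drops under passage to a quotient and under passage to a finite-index subgroup (Corollary~\ref{Commensurable}) is exactly what the paper uses implicitly when it ``replaces $\Gamma$'' by the image in a simple quotient. Your ``FAb + finitely generated solvable $\Rightarrow$ finite'' lemma is a clean packaging of the paper's assertion that a virtually solvable Zariski closure forces a finite-index subgroup with infinite abelianization (so $\rho=\infty$).

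Where you genuinely diverge is the local step. You apply strong approximation for Zariski-dense subgroups directly, asserting that for almost every $v$ the closure of $\Delta$ in $S(\calo_v)$ is \emph{open}; the paper only uses the much softer fact that for almost all $v$ this closure is \emph{compact}, and then invokes Pink's structure theorem on Zariski-dense compact subgroups of semisimple groups over local fields \cite{Pi} to exhibit an open subgroup of the closure that is topologically isomorphic to an open compact subgroup of $G'(k'_v)$ for some simply connected almost simple $G'$ over some finite extension $k'_v$. Your route is more direct and transparent, but it is more delicate on one point you pass over quickly: the absolutely simple quotient $S$ of $G^\circ$ need not be simply connected, and the clean statement of strong approximation (Nori, Weisfeiler, Matthews--Vaserstein--Weisfeiler) that you are implicitly using is formulated for simply connected groups. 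One can make your argument go through (e.g.\ by citing Pink's version of strong approximation for arbitrary semisimple groups, or by arguing that the image of the closure of a lift in the simply connected cover is already open in $S(\calo_v)$), but as written this is a gap. The paper's route via Pink's compact-subgroups theorem sidesteps the issue entirely, since the theorem's output is by design an open subgroup of the $k'_v$-points of a \emph{simply connected} almost simple group, after which Theorem~\ref{Isotropic} applies verbatim. Your remark about the necessity of choosing $v$ with $S$ isotropic over $F_{0,v}$ is exactly on target and matches the role it plays in the paper.

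Beyond that simply-connectedness caveat, your proposal is sound, and it makes explicit several reductions (FAb, quotient monotonicity of $\rho$, isotropicity at almost all places) that the paper states more tersely.
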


\begin{proof}  If $\Gamma$ is a quotient of $\Gamma_1$ then
$0 \le \rho (\Gamma) \le \rho(\Gamma_1)$, so it suffices to prove
the  result for the case of a linear group $\Gamma$. Moreover, we
can replace $F$ by the ring generated by the entries of the
generators of $\Gamma$ to deduce that $\Gamma$ is inside $\GL_n(A)$
for some finitely generated subring  $A$ of $F$.  Let now $G$ be
the Zariski closure of $\Gamma$.  If $G$ is virtually solvable
then $\Gamma$ has a finite index subgroup with an infinite
abelianization. This implies that for some $\ell,  \, \Gamma$ has
infinitely many $\ell$-dimensional irreducible representations and
so $\rho(\Gamma) = \infty$ and we are done.  So assume $G$ is not
virtually solvable and we can then replace $G$ by its quotient
modulo the solvable radical and replace $\Gamma$ by a finite index
subgroup (using  Corollary 4.5) to assume that $G$ is semisimple,
or even simple by taking a (non-trivial) simple quotient.

Now, we specialize $A$ into a global field $k$, keeping $\Gamma$
non-virtually solvable.  In fact, it was shown in \cite[Theorem~4.1]{LaLu}
that this can be done keeping $G$ as the Zariski
closure.

So, altogether we can assume  $\Gamma$ is a Zariski dense subgroup
in $G(k)$, where $G$ is a simple $k$-group.  Let $U_v$ denote the
closure, in the $v$-adic topology, of $\Gamma$ in $G(k_v)$ for
some non-archimedean place $v$ for which $G$ is isotropic over
$k_v$ and that closure is compact.  Note that all but finitely
many $v$ satisfy each condition, so there is no difficulty in
fixing $v$ satisfying both. By Pink's characterization of
Zariski-dense compact subgroups of semisimple groups over local
fields \cite{Pi}, there exists a finite extension $k'_v$ of $k_v$,
a simply connected, almost simple algebraic group $G'$ over
$k'_v$, and a compact open subgroup $U'_v\subset G'(k'_v)$ such
that $U_v$ is topologically isomorphic to the quotient of $U'_v$
by its intersection with the center of $G'(k'_v)$. Replacing
$U'_v$ with an open subgroup which meets that center only at the
identity, we see that $U_v$ has an open subgroup which is
topologically isomorphic to an open subgroup of the $k'_v$ points
of the almost simple algebraic group $G'$. Hence $\rho(\Gamma) \ge
\rho(U_v) \ge \frac 1{15}$ by Theorem~\ref{Isotropic}.
\end{proof}

We now show that Theorem~\ref{Linear-bound} is not valid in general
for finitely
generated, residually finite groups.  In fact, we can even prove:

\begin{theorem} There exists a finitely generated, residually
finite, infinite group $\Gamma$ with $\rho(\Gamma) = 0$.
\end{theorem}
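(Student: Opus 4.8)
The plan is to locate the example among groups acting on rooted trees, where failure of linearity is so complete that the obstruction of Theorem~\ref{Linear-bound} never arises: take $\Gamma$ to be the first Grigorchuk group $\mathfrak G$ (a Gupta--Sidki $p$-group would serve as well). Recall that $\mathfrak G$ is a $3$-generated, infinite, residually finite, periodic group, acting faithfully, self-similarly, and as a branch group on the rooted binary tree $T$, and that it has the congruence subgroup property: every finite-index subgroup contains some level stabilizer $\mathrm{St}(N)$. Set $Q_N=\mathfrak G/\mathrm{St}(N)$; the congruence subgroup property makes every $Q_N$ a finite $2$-group.

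The soft part of the argument reduces everything to a counting problem inside the $Q_N$. Since $\mathfrak G$ is finitely generated and periodic, Schur's theorem on periodic linear groups forces every finite-dimensional complex representation of $\mathfrak G$ to have finite image; hence $r_n(\mathfrak G)=\hat r_n(\mathfrak G)$ for all $n$. Next, $\mathfrak G$ has property (FAb) --- a known consequence of its branch structure together with the congruence subgroup property --- so by \cite[Prop.~2]{BLMM} (the criterion recalled in \S4: $\hat r_n(\Gamma)<\infty$ for all $n$ iff $\Gamma$ has (FAb)) we get $r_n(\mathfrak G)<\infty$ for every $n$. Finally, every irreducible representation of $\mathfrak G$ factors through some $Q_N$, so the dimensions occurring are powers of $2$ and $r_{2^j}(\mathfrak G)$ equals the number of $2^j$-dimensional irreducibles of $Q_N$ for $N$ large; thus $R_n(\mathfrak G)=\sum_{2^j\le n}r_{2^j}(\mathfrak G)$.

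The substance is the estimate $\log R_n(\mathfrak G)=o(\log n)$. For this I would run Clifford theory down the chain $Q_N\triangleright\mathrm{St}(1)/\mathrm{St}(N)\triangleright\cdots\triangleright 1$, using two features of a branch group: (i) each layer $\mathrm{St}(\ell-1)/\mathrm{St}(\ell)$ is an elementary abelian $2$-group on which $Q_N$ acts essentially through the spherically transitive action of $Q_{\ell-1}$ on the $2^{\ell-1}$ vertices of level $\ell-1$, so a layer-character whose stabilizer has small index must be ``aligned with the tree'', supported up to bounded error on a union of subtrees rooted at a low level; and (ii) dimensions multiply as one descends the tree --- concretely, on restricting to the finite-index subgroup $\mathrm{Rist}(\ell)\cong L^{2^\ell}$ (a direct product of $2^\ell$ copies of a fixed finite-index subgroup $L\le\mathfrak G$), an irreducible of dimension $d$ of $\mathfrak G$ is nontrivial on only $O(\log d)$ of the $2^\ell$ factors. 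Combining (i) and (ii) in the Clifford bookkeeping expresses the number of $2^j$-dimensional irreducibles in terms of the numbers of irreducibles of strictly smaller dimension (it is the spherical transitivity that rules out the ``single full-dimensional factor in one branch'' configuration which would otherwise feed the recursion back onto itself), and one extracts a bound on $r_{2^j}(\mathfrak G)$ that is quasi-polynomial in $j$; summing over $j\le\log_2 n$ then gives $\rho(\mathfrak G)=0$, while $\mathfrak G$ is plainly finitely generated, residually finite, and infinite.

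I expect this recursion to be the crux. The naive wreath-product count is far too lossy: the number of branches below level $\ell$ is $2^\ell$ while $[\mathfrak G:\mathrm{Rist}(\ell)]$ grows like $2^{\Theta(2^\ell)}$, so keeping the final bound subpolynomial forces one to choose $\ell$ very carefully as a function of the target dimension and to use, quantitatively sharply, both that a low-dimensional representation is detected on only logarithmically many branches and that tree-aligned subsets of branches are comparatively scarce. Verifying (FAb) for $\mathfrak G$ and nailing down the precise layer structure of the $Q_N$ is routine branch-group bookkeeping that nonetheless has to be carried out.
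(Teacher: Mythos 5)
The paper's construction is entirely different from what you propose. It takes $\Gamma$ to be one of Dan Segal's groups with profinite completion $W=\varprojlim W_r$, where $W_r=A_{\ell_r}^{\ell_0\cdots\ell_{r-1}}\rtimes W_{r-1}$ is an iterated wreath product of alternating groups whose degrees $\ell_r$ are chosen to grow very rapidly, and the key quantitative input is the Liebeck--Shalev theorem that $\calz_{A_k}(s)\to 1$ as $k\to\infty$ for every $s>0$. The rapid growth of $\ell_r$ can be \emph{prescribed} so that the contribution of each level to $R_n$ is negligible; this is exactly the flexibility your proposal gives up by pinning $\Gamma$ to be the (fixed) Grigorchuk group.

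Your soft reductions are fine: Schur's theorem gives finite image for all representations of a finitely generated periodic group, torsion trivially gives (FAb), and Grigorchuk's congruence subgroup property makes every irreducible factor through some $Q_N$. But the substance of the proof is missing, and you say so yourself (``I expect this recursion to be the crux''). Worse, the pivotal quantitative claim in your sketch --- that a $d$-dimensional irreducible of $\mathfrak{G}$ is nontrivial on only $O(\log d)$ of the $2^\ell$ rigid-stabilizer factors --- is false as stated: the factors $L$ are finite-index subgroups of a $2$-group, hence have nontrivial $1$-dimensional characters, and such characters can be switched on across arbitrarily many factors without increasing the dimension of the resulting tensor product. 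Any correct version would have to be routed through a bound on the $\mathfrak{G}$-orbit size (which is at most $d$), and the resulting bookkeeping over tree-aligned character patterns is a genuine open-ended problem rather than ``routine.'' In short, you have replaced the paper's simple lever --- choose $\ell_r$ huge and invoke Liebeck--Shalev --- with the much harder unconditional task of estimating the conjugacy-class distribution of the congruence quotients $Q_N$ of a fixed branch $2$-group, and it is not at all clear from your sketch that $\rho(\mathfrak{G})=0$ even holds. This is a real gap, not a deferred computation.
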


\smallskip \noindent\emph{Proof}.  Let us recall first the result
of Liebeck and Shalev counting representations of the alternating
groups $A_k$.
\smallskip
\begin{theorem}[Liebeck-Shalev \cite{LiSh1}]
\label{Liebeck-Shalev}
For every $s>0$, $\lim\limits_{k \to
\infty} \calz_{A_k} (s) = 1$ where as before $\calz_{A_k} (s) =
\suml^\infty_{i = 1} r_i(A_k) i^{-s}$.
\end{theorem}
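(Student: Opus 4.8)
The plan is to reduce, by way of the standard branching from $S_k$ to $A_k$, to a statement purely about how many irreducible representations of the symmetric group $S_k$ have small dimension, and then to control that count uniformly in $k$ using the hook-length formula.

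First I would dispose of the trivial representation: since $A_k$ is perfect for $k\ge 5$, it has no nontrivial one-dimensional representation, so $\calz_{A_k}(s)=1+\sum_{\rho\ne 1}(\dim\rho)^{-s}$, and the task is to show this sum tends to $0$ for each fixed $s>0$. Recall that the irreducible $S_k$-modules are the Specht modules $S^\lambda$, $\lambda\vdash k$; that $S^\lambda$ restricted to $A_k$ remains irreducible of dimension $\dim S^\lambda$ when $\lambda\ne\lambda'$ (with $S^\lambda$ and $S^{\lambda'}$ restricting to the same module), while for $\lambda=\lambda'$ it splits into two irreducibles of dimension $\tfrac12\dim S^\lambda$. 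Since the trivial $A_k$-module comes from the pair $\{(k),(1^k)\}$, this gives
\[
\calz_{A_k}(s)-1\ \le\ (1+2^{s+1})\sum_{\lambda\vdash k,\ \lambda\ne(k),(1^k)}(\dim S^\lambda)^{-s},
\]
so it suffices to prove that $\Sigma_k(s):=\sum_{\lambda\ne(k),(1^k)}(\dim S^\lambda)^{-s}$ tends to $0$.

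The crucial input is a uniform bound on low-dimensional Specht modules: for every $\epsilon>0$ there is a constant $C_\epsilon$, independent of $k$, such that $S_k$ has at most $C_\epsilon\,n^{\epsilon}$ irreducible modules of dimension at most $n$. This I would extract from classical estimates on the hook-length formula — the determination of the minimal character degrees of $S_k$ and the counting bounds on character degrees of the kind already used in \cite{LiSh1} — the underlying fact being that $\dim S^\lambda$ is small only when $\lambda$ sits close to the hook shape, near $(k)$ or near $(1^k)$, and there are few such $\lambda$: $\dim S^\lambda=1$ only for $(k),(1^k)$, the minimal nontrivial dimension is $k-1$ (attained, for $k$ large, only at $(k-1,1)$ and $(2,1^{k-2})$), and $\#\{\lambda:\dim S^\lambda\le k^m\}$ is bounded by a function of $m$ alone once $k$ is large. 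Granting the bound, list the dimensions occurring in $\Sigma_k(s)$ as $k-1=n_1\le n_2\le\cdots\le n_N$; taking $\epsilon=s/2$ gives $j\le C_{s/2}\,n_j^{s/2}$, hence $n_j^{-s}\le C_{s/2}^{2}\,j^{-2}$, and therefore, for any integer $J$,
\[
\Sigma_k(s)\ \le\ \sum_{j\le J}n_j^{-s}+C_{s/2}^{2}\sum_{j>J}j^{-2}\ \le\ J\,(k-1)^{-s}+\frac{C_{s/2}^{2}}{J}.
\]
Choosing $J=\lfloor (k-1)^{s/2}\rfloor$ makes both terms $O\big((k-1)^{-s/2}\big)$, so $\Sigma_k(s)\to 0$ and the theorem follows.

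The real work, and the step I expect to be the main obstacle, is the uniform count $\#\{\lambda\vdash k:\dim S^\lambda\le n\}\le C_\epsilon n^{\epsilon}$; everything else is bookkeeping. I would approach it via $\dim S^\lambda=k!/\prod_{\square}h(\square)$, showing that if $\dim S^\lambda$ is small then $\lambda$ is forced to be close to a hook — one of $\lambda_1,\lambda_1'$ close to $k$, the inner partition obtained by deleting the first row and column of bounded size — and then enumerating the (few) partitions of this type. It is worth stressing why one cannot avoid this: the naive estimate bounding the tail of $\Sigma_k(s)$ by (number of irreducibles)$\cdot$(minimal dimension)$^{-s}$ fails, since the number of irreducibles of $S_k$ is $p(k)=e^{\Theta(\sqrt k)}$, which outgrows every fixed power of $k$; it is essential that the number of representations of dimension $\le n$ grow only like a small power of $n$, with both exponent and constant independent of $k$.
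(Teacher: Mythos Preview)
The paper does not prove this theorem at all: it is quoted, with attribution, from Liebeck--Shalev \cite{LiSh1}, and then used as a black box (together with Lemma~\ref{R-bound}) in the construction of the group with $\rho(\Gamma)=0$. So there is no ``paper's own proof'' against which to compare your proposal.

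Your outline is sound as far as it goes. The branching from $S_k$ to $A_k$ is handled correctly, the inequality $\calz_{A_k}(s)-1\le (1+2^{s+1})\Sigma_k(s)$ is fine, and the partial-summation step from the uniform count $\#\{\lambda\vdash k:\dim S^\lambda\le n\}\le C_\epsilon n^{\epsilon}$ to $\Sigma_k(s)\to 0$ is correct. You are also right that the whole content of the theorem sits in that uniform count, and that the naive bound via $p(k)$ and the minimal nontrivial degree fails.

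What you have not done is prove the key lemma; you have only indicated a line of attack. That lemma is essentially equivalent to the $S_k$ analogue $\calz_{S_k}(s)\to 2$, so your reduction, while tidy, has not lowered the difficulty. The actual argument in \cite{LiSh1} does proceed along the lines you suggest: one shows, via the hook-length formula and an analysis of partitions with a long first row or column, that for each fixed $c$ the number of $\lambda\vdash k$ with $\dim S^\lambda\le k^c$ is bounded independently of $k$, and that all other degrees exceed a fixed positive power of $k!$. Carrying this out requires genuine work---in particular, quantitative control on how far $\lambda$ can deviate from a hook while keeping $\dim S^\lambda$ small---and you should either supply that argument or cite \cite{LiSh1} for it, as the present paper does.
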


This theorem can be converted to an explicit upper bound on
representation growth, via the following lemma:

\begin{lemma}
\label{R-bound}
If $G$ is a perfect finite group, $0 < s < 1$,
and $\calz_G(s) < 1 + c$, then for every $n \in \bbn$, we have
$R_n(G) \le cn^{s} +1$.
\end{lemma}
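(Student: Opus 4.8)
The plan is to use perfectness to isolate the degree-one term and then compare the partial sum $R_n(G)$ with the tail of the Dirichlet series term by term. First I would note that since $G$ is perfect, $G/[G,G]$ is trivial, so the trivial representation is the only one-dimensional representation of $G$; hence $r_1(G) = 1$. This gives the two identities
$$\calz_G(s) = 1 + \suml_{i=2}^\infty r_i(G)\, i^{-s}, \qquad R_n(G) = 1 + \suml_{i=2}^n r_i(G),$$
and the hypothesis $\calz_G(s) < 1 + c$ becomes $\suml_{i=2}^\infty r_i(G)\, i^{-s} < c$.

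Next I would estimate $\suml_{i=2}^n r_i(G)$ from above. Since $s > 0$, every index $i$ with $2 \le i \le n$ satisfies $i^{-s} \ge n^{-s}$, so
$$n^{-s} \suml_{i=2}^n r_i(G) \;\le\; \suml_{i=2}^n r_i(G)\, i^{-s} \;\le\; \suml_{i=2}^\infty r_i(G)\, i^{-s} \;<\; c.$$
Multiplying by $n^s$ yields $\suml_{i=2}^n r_i(G) < c n^s$, and adding back the degree-one contribution gives $R_n(G) < c n^s + 1$, hence $R_n(G) \le c n^s + 1$. For $n = 1$ the bound is immediate, since $R_1(G) = r_1(G) = 1 \le c + 1$, using $c > 0$, which in turn follows from $\calz_G(s) \ge r_1(G) = 1$.

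There is no genuine obstacle here: the argument is purely a termwise comparison, the only structural input being $r_1(G) = 1$ from perfectness together with the monotonicity $i^{-s} \ge n^{-s}$ for $i \le n$. I would also remark that the hypothesis $0 < s < 1$ is more than is needed — any $s > 0$ suffices — but this restriction is harmless and reflects the range in which the lemma will be applied, e.g. in combination with Theorem~\ref{Liebeck-Shalev}.
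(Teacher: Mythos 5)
Your proof is correct and is essentially the same termwise comparison argument used in the paper: isolate the trivial representation via perfectness, then bound $\suml_{i=2}^n r_i(G)$ by $n^s$ times the tail of $\calz_G(s)$ using $i^{-s}\ge n^{-s}$. Your observation that only $s>0$ is needed is accurate but immaterial.
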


\begin{proof}
As $G$ is perfect, $r_1(G) = 1$.
$$\left(R_n(G) - 1\right) n^{-s} = \suml^n_{i=2} r_i(G) n^{-s} \le
\suml^n_{i = 2} r_i(G) i^{-s} \le c,$$
which implies the lemma.
\end{proof}

Let us now recall some results of Segal \cite{S}: Let $\ell_0,
\ell_1, \ell_2, \dots $ be a sequence of positive integers.  We
construct, by induction, a sequence of finite groups $W_r$ as
follows:

\smallskip

$W_0 = A_{\ell_0}, \; W_1 = A_{\ell_1}^{\ell_0} \rtimes W_0, \; \;
W_2 = A_{\ell_2}^{\ell_0\ell_1} \rtimes W_1,\dots , W_r =
A_{\ell_r}^{\ell_0 \dots \ell_{r-1}} \rtimes W_{r-1}, \dots$.

\medskip

These are wreath products obtained as natural subgroups of the
automorphism group of the rooted tree with degree $\ell_0$ at the
origin and degree  $\ell_i + 1$ for all the vertices of level
$i>0$ (i.e. of distance $i$ from the origin). See [S] for the
detailed description.  Let $W$ be the profinite group $W =
\lim\limits_{\stackrel{\textstyle \longleftarrow}{\textstyle r}}
W_r$ with the obvious morphisms. It is also shown in [S] that $W$
contains a finitely generated dense subgroup $\Gamma$ whose
profinite completion is isomorphic to $W$ via the natural map
$\hat \Gamma \to W$ extending the embedding $\Gamma
\hookrightarrow W$.  It is easy to deduce that $\Gamma$ is not a
linear group.  Moreover, every representation of it factors
through one of the $W_r$.  Indeed, if $\Gamma$ had had an infinite
non-virtually solvable representation then (by an application of
strong approximation for linear groups \cite[pp.~389--407]{LS})
there would have been infinitely many simple groups of Lie type
among the composition factors of $\hat \Gamma = W$.  But as we
know, all the composition factors of $W$ are alternating groups.
Moreover, $W$ (and hence $\Gamma$) has the (FAb) property (i.e.,
every finite index subgroup has a finite abelianization) and so
$\Gamma$ has no infinite virtually-solvable quotients either. Thus
every representation factors through some $W_r$.  Moreover, as the
kernels $\ker (W\to W_r)$ are the only finite index normal
subgroups of $W$, a representation  of $W_r$ which does not factor
through $W_{r-1}$ must be faithful.

Let us now choose a sequence $\ell_0, \ell_1, \dots, \ell_r,
\dots$ which grows sufficiently fast.  More specifically we want

\begin{equation}
\label{big-lr}
\frac{\log |W_{r-1}|}{\log \ell_r} \, < \, \frac 1 r
\end{equation}
and
\begin{equation}
\label{small-zeta}
\calz_{A_{\ell_r}} \; (\frac 1r) < 1 + \frac{1}{L_{r-1}}
\end{equation}
where $L_{r-1} = \ell_0 \ell_1 \cdots\ell_{r-1}$

\medskip

Note that since $|W_{r-1}| = ( \half\ell_{r-1}!)^{\ell_0 \cdots
\ell_{r-2} }|W_{r-2}|$, the order of $W_{r-1}$ depends only on
$\ell_0,\ldots, \ell_{r-1}$, so we can choose $\ell_r$ large enough
to satisfy (\ref{big-lr}).  Also as $\calz_{A_k} (\frac 1r)
\underset{k\to\infty}{\longrightarrow} 1$ we can make sure that
$\ell_r$ also satisfies (\ref{small-zeta}).

Given the sequence $\ell_0, \ell_1, \dots,
\ell_r, \dots , $ let $W$ and $\Gamma$ be the groups as defined
before with respect to this sequence.  We have to bound
$r_n(\Gamma)$.

So given $n \in \bbn$, let $r $ be the unique natural number for
which $\ell_r -1 \le n < \ell_{r+1} -1$.  As $A_{\ell_{r+1}}$ is a
subgroup of $W_{r+1}$ and every non-trivial representation of
$A_{\ell_{r+1}}$ is of dimension at least $\ell_{r+1} - 1$, all
the $n$-dimensional representations of $\Gamma$ factor through
$W_r$ for this $r$.  By Proposition~\ref{Res-Ind},
\[
R_n(W_r) \le |W_{r-1}| \; R_n (A^{\ell_0\cdots
\ell_{r-1}}_{\ell_r} )
\]
where $R_n$ is the number of all irreducible representations of
dimension at most $n$.

Thus:
\begin{equation*}
 \lim\limits_{n\to\infty} \; \frac{\log R_n (\Gamma)}{\log n} \;
= \; \lim\limits_{n\to\infty} \; \frac{\log R_n(W_r)}{\log n} \;
\le  \lim\limits_{n\to\infty} \; \frac{\log |W_{r-1}|}{\log n} \,
+ \lim\limits_{n\to\infty} \; \frac{\log
R_n(A_{\ell_r}^{\ell_0\cdots\ell_{r-1}})} {\log n}
\end{equation*}
As $n\ge \ell_r-1$,  (\ref{big-lr}) implies the first summand is zero.

For the second summand, note that for
$L_{r-1} = \ell_0\cdots\ell_{r-1}$,
\[
\calz_{A_{\ell_r}^{L_{r-1}}} (s) = \calz_{A_{\ell_r}}
(s)^{L_{r-1}}
\]
Thus by (\ref{small-zeta}) we get

$$\calz_{A_{\ell_r}^{L_{r-1}} }(\frac 1r)
< \left(1+\frac{1}{L_{r-1}}\right)^{L_{r-1}} < e < 3.$$

This means by Lemma~\ref{R-bound} that
\[
R_n \left(A_{\ell_r}^{L_{r-1}}\right) \le 2n^{1/r} + 1\] Thus $
\lim\limits_{n\to\infty} \ \frac{\log R_n
(A_{\ell_r}^{L_{r-1}})}{\log n} \, = \, 0$ and so
\[ \rho(\Gamma) = \lim\limits_{n\to\infty} \; \frac{\log R_n
(\Gamma)}{\log n} \, = \, 0\] as promised. \hfill\qed

\section{Lattices in the same semisimple group}

The following theorem gives some support to our
Conjecture~\ref{LatticeComparison} which predicts the same
abscissa of convergence for lattices in the same semisimple
locally compact group.

Let $H = \prod\limits^\ell_{i = 1} \SL_2(K_i)$ where each $K_i$ is
a local field. Recall that $\rk H = \ell$, and when $\ell\ge 2$,
every irreducible lattice $\Gamma$ in $H$  is ($S$-)arithmetic. In
this case, Serre's conjecture \cite{Se} predicts that $\Gamma$ has
the CSP. This has been proved in the case of non-uniform lattices.
On the other hand, when $\ell = 1$, there are non-arithmetic
lattices, and even the arithmetic ones do not satisfy the CSP (see
\cite[Chapter~7]{LS} for an overview and references.) Here we
prove

\begin{theorem}
\label{power-of-SL2}
Let $H = \prod\limits^\ell_{i = 1} \SL_2(K_i)$ where
the $K_i$ are local fields of characteristic different than 2.
Let $\Gamma$ be an irreducible lattice of $H$.   Then:
\renewcommand{\theenumi}{\alph{enumi}}
\begin{enumerate}
\item If $\ell = 1$, then $\rho(\Gamma) = \infty.$
\item If $\ell\ge 2$ and $\Gamma$ has the CSP, then $\rho(\Gamma) = 2.$
\end{enumerate}
\renewcommand{\theenumi}{\alph{roman}}

\end{theorem}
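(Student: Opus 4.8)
\emph{Part (a).} When $\ell=1$, $\Gamma$ is just a lattice in the rank-one group $\SL_2(K_1)$, and the plan is to produce a finite-index subgroup $\Gamma_0\le\Gamma$ with infinite abelianization: then $r_1(\Gamma_0)=\infty$, so $R_1(\Gamma_0)=\infty$ and $\rho(\Gamma_0)=\infty$ (the abscissa being $\infty$ as soon as some $R_n$ is infinite), whence $\rho(\Gamma)=\rho(\Gamma_0)=\infty$ by Corollary~\ref{Commensurable}. Such a $\Gamma_0$ always exists: for $K_1$ non-archimedean, $\Gamma$ acts on the Bruhat--Tits tree of $\SL_2(K_1)$ with finite stabilizers and is therefore virtually a non-abelian free group; for $K_1=\bbr$, $\Gamma$ is virtually a surface or free group; for $K_1=\bbc$, $\Gamma$ is a finite-covolume Kleinian group, which virtually surjects onto $\bbz$ --- directly in the non-uniform case (the cusp tori force positive first Betti number), and by the virtual fibering of closed hyperbolic $3$-manifolds in the uniform case. (For arithmetic $\Gamma$ in characteristic $0$ one may instead observe that a rank-one arithmetic group fails the CSP, so Theorem~\ref{Lubotzky-Martin} already gives $\rho(\Gamma)=\infty$.)

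\emph{Part (b), setup.} Now $\Gamma$ is higher-rank, so by Margulis arithmeticity it is commensurable with $\bg(\calo_S)$ for a $k$-form $\bg$ of $\SL_2$ over a global field $k$, with $\prod_{v\in S}\bg(k_v)$ mapping onto $H$ with compact kernel and cokernel. Since the $K_i$-factors are $\SL_2(K_i)$ and the only local forms of $\SL_2$ are $\SL_2$ itself and the compact group $\SL_1(D)$ of a quaternion algebra, $\bg$ is split at the places of $S$ underlying the $K_i$ and $\bg\times_k\bbc\cong\SL_2(\bbc)$. Assuming the CSP, Proposition~\ref{Euler} furnishes a finite-index $\Gamma_0$ (with $\rho(\Gamma)=\rho(\Gamma_0)$ by Corollary~\ref{Commensurable}) with
\begin{equation*}
\calz_{\Gamma_0}(s)\;=\;\calz_{\SL_2(\bbc)}(s)^{\#S_\infty}\cdot\prod_{v\notin S}\calz_{L_v}(s),
\end{equation*}
where $L_v$ is open in $\bg(\calo_v)$ and equal to $\SL_2(\calo_v)$ for all but finitely many $v$; as all Dirichlet coefficients are non-negative, $\rho(\Gamma_0)$ is just the abscissa of convergence of the product on the right. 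The archimedean factor is $\zeta(s)^{\#S_\infty}$, of abscissa $1$ (equivalently $\rho(\SL_2(\bbc))=r/\kappa=1$ by Theorem~\ref{Archimedean}), and $\rho(L_v)=1$ for every $v$: for the split places with odd residue field by Jaikin-Zapirain's formula (Theorem~\ref{SL2}), and for the finitely many remaining places (even residue characteristic, or $\bg$ ramified so that $L_v$ lies in the compact group $\SL_1(D_v)$) by Theorems~\ref{SL1} and \ref{not-char-2} together with Proposition~\ref{compare-growth}.

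\emph{Part (b), the infinite product.} The crucial point is that the \emph{infinite} product has abscissa $2$, strictly larger than the common local value $1$. The plan is to read off from Theorem~\ref{SL2} the uniform estimate
\begin{equation*}
\calz_{L_v}(s)-1\;=\;q_v^{1-s}\bigl(1+O(q_v^{-1})+O(q_v^{1-s})\bigr)
\end{equation*}
for $s$ real in a fixed compact subset of $(1,\infty)$ and for all but finitely many $v$; the leading term $\sim q_v^{1-s}$ records the $\sim q_v$ irreducible representations of $L_v$ of dimension $q_v\pm1$ inflated from the cuspidal and principal series of $\SL_2(\bbf_{q_v})$ (this also gives the matching lower bound, needing only the representation theory of $\SL_2(\bbf_{q_v})$). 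Since $\prod_v(1+a_v)$ with $a_v\ge 0$ converges exactly when $\sum_v a_v<\infty$, and $\calz_{\Gamma_0}(s)$ is a finite product of $\zeta(s)^{\#S_\infty}$, finitely many factors of abscissa $1$, and the infinite product over the generic places, it converges at a real $s>1$ iff $\sum_v q_v^{1-s}<\infty$. Finally $\sum_v q_v^{1-s}$ is comparable to $\log\zeta_k(s-1)$, which is finite for real $s>2$ and infinite for real $1<s\le 2$ (as $\zeta_k$ has a simple pole at $1$); hence $\rho(\Gamma)=\rho(\Gamma_0)=2$.

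\emph{Main obstacle.} In (b) the genuinely delicate step is the \emph{uniform-in-$v$} control of $\calz_{L_v}(s)$ near $s=2$: this leans on Jaikin-Zapirain's explicit computation, and requires a separate (but routine) check at the finitely many places of residue characteristic $2$ and at the ramified places, where one only needs $\rho(L_v)\le 2$ since they contribute a finite subproduct. Once the local estimates are in hand, comparing $\prod_{v\notin S}\calz_{L_v}(s)$ with $\zeta_k(s-1)$ to locate the abscissa is straightforward. For (a), the only ingredient beyond soft arguments is the closed $\SL_2(\bbc)$ case, handled by the virtual fibering of hyperbolic $3$-manifolds.
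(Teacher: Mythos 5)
Your Part~(b) is essentially the paper's proof: extract the Euler product, discard the archimedean factor and the finitely many exceptional non-archimedean factors (all of abscissa $\le 1$ by Theorems~\ref{Archimedean}, \ref{SL2}, \ref{SL1}, and \ref{not-char-2}), and then bound the generic factors $\calz_{\SL_2(\calo_v)}(s)$ uniformly from Jaikin-Zapirain's explicit formula. The paper squeezes each factor between $(1-q_v^{1-s})^{-1/2}$ and $(1-q_v^{1-s})^{-100}$ on $[2,3]$ and compares the product with powers of $\zeta_{k,T}(s-1)$; your leading-term estimate $\calz_{L_v}(s)-1 = q_v^{1-s}\bigl(1 + O(q_v^{-1}) + O(q_v^{1-s})\bigr)$ and the criterion $\sum_v q_v^{1-s}<\infty$ are the same computation differently packaged, and both locate the abscissa at the pole of $\zeta_k$ at $1$, i.e.\ $s=2$.

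In Part~(a) you take a genuinely different route at the one hard spot, namely cocompact lattices in $\SL_2(\bbc)$. You invoke the virtual fibering of closed hyperbolic $3$-manifolds (Agol's theorem) to produce a finite-index subgroup with infinite abelianization, giving $r_1=\infty$ directly. The paper, written when Thurston's virtual fibering conjecture was still open, explicitly declines to use it and instead argues via subgroup growth: by \cite{Lu1}, such a $\Gamma$ has a finite-index subgroup whose pro-$p$ completion is Golod--Shafarevich; Shalev's lower bound $a_n(L)\ge n^{(\log n)^{2-\e}}$ for such groups then feeds into Corollary~\ref{fast-subgroups} (derived from Proposition~\ref{compare-growth}(c)) to force $\rho(\Gamma)=\infty$ even though this never produces an infinite abelianization. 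Your argument is shorter and now available; the paper's argument is self-contained within what was known at the time and illustrates the subgroup-growth/representation-growth comparison machinery of Proposition~\ref{compare-growth}, which they develop in the same section. Both are correct, and your alternative remark that CSP fails for rank-one arithmetic groups, so Theorem~\ref{Lubotzky-Martin} applies, is also a valid shortcut in the arithmetic characteristic-zero subcase.
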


Before proving the Theorem, let us make a few observations on the
connection between representation growth and subgroup growth of a
finitely generated pro-$p$ group $L$. As before, let $a_n(L)$
(resp. $s_n(L)$) be the number of subgroups of $L$ of index $n$
(resp. at most $n$) and $r_n(L)$ (resp. $R_n(L)$) the number of
irreducible representations of $L$ of degree $n$ (resp. at most
$n$).  For a finite index subgroup $M$ of $L$, denote
$$d(M) = \dim_{\bbf_p} (M/[M, M]M^p) = \log_p(|M/[M, M]M^p|)$$
and
$$e(M) = \log_p  (|M/[M, M]|).$$
Let
$$d_j(L)= \sup\{d(M)|[L:M]=p^j\},$$
$$e_j(L) =\sup\{e(M)| [L:M]=p^j\},$$
and
$$d^*_j(L) =\suml^j_{i=0} d_i(L).$$

\begin{proposition}
\label{compare-growth}
Let $L$ be a finitely generated pro-$p$ group and $j \in \bbn$.
Then:
\begin{enumerate}
\item[(a)] $p^{d_{j-1}(L)-1} \le a_{p^j}(L) \le p^{d^*_{j-1}(L)}$.
\item[(b)] $R_{p^j} (L) \ge \frac{1}{p^j} p^{d_j(L)}$.
\item[(c)] $\log R_{p^j}(L)\ge \frac1j \log a_{p^j}(L) -
\frac{j-1}{2}$.
\item[(d)] $r_{p^j}(L)\le a_{p^j}(L)\cdot e_j(L)$.
\end{enumerate}
\end{proposition}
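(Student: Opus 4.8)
The plan is to prove the four inequalities one at a time. Parts (a) and (c) are pure counting arguments about chains of open subgroups, while (b) and (d) rest on Frobenius reciprocity together with the classical fact that every irreducible representation of a finite $p$-group is monomial. All logarithms below are to base $p$; I write $d(M)$ for the minimal number of topological generators of an open subgroup $M\le L$, so that $d(M)=\dim_{\bbf_p}M/[M,M]M^p$, and since a finitely generated pro-$p$ group has only finitely many open subgroups of each index, the suprema defining $d_j(L)$ and $e_j(L)$ are attained. For the lower bound in (a), choose $M$ of index $p^{j-1}$ with $d(M)=d_{j-1}(L)$; the open subgroups of $M$ of index $p$ correspond to the hyperplanes of $M/[M,M]M^p\cong\bbf_p^{\,d_{j-1}(L)}$, of which there are $1+p+\cdots+p^{d_{j-1}(L)-1}\ge p^{d_{j-1}(L)-1}$, and these are distinct open subgroups of $L$ of index $p^j$. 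For the upper bound, every open subgroup $N$ of index $p^j$ sits at the bottom of a chain $L=M_0\supset M_1\supset\cdots\supset M_j=N$ with $[M_i:M_{i+1}]=p$: the quotient $L/\mathrm{core}_L(N)$ is a finite $p$-group, hence nilpotent, so $N/\mathrm{core}_L(N)$ is subnormal with all successive indices equal to $p$, and one pulls such a chain back to $L$. Given $M_0\supset\cdots\supset M_i$, the subgroup $M_{i+1}$ is one of at most $p^{d(M_i)}\le p^{d_i(L)}$ index-$p$ subgroups of $M_i$, so the number of such chains, and hence $a_{p^j}(L)$, is at most $\prod_{i=0}^{j-1}p^{d_i(L)}=p^{d^*_{j-1}(L)}$.

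For (b), fix $M$ of index $p^j$ with $d(M)=d_j(L)$ and let $X$ be the set of the $p^{d_j(L)}$ linear characters of $M$ that factor through $M/[M,M]M^p$. For $\chi\in X$ the induced representation $\mathrm{Ind}_M^L\chi$ has dimension $p^j$, so it has an irreducible constituent $\sigma_\chi$ with $\dim\sigma_\chi\le p^j$ in which, by Frobenius reciprocity, $\chi$ occurs as a one-dimensional constituent of $\sigma_\chi|_M$. For a fixed irreducible $L$-representation $\sigma$, the number of one-dimensional $M$-constituents of $\sigma|_M$ is at most $\dim\sigma\le p^j$, since each such constituent contributes at least $1$ to the total dimension; hence at most $p^j$ elements of $X$ can map to a given $\sigma$ under $\chi\mapsto\sigma_\chi$. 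Therefore $\{\sigma_\chi:\chi\in X\}$ has at least $p^{d_j(L)}/p^j$ elements, all of dimension $\le p^j$, which gives $R_{p^j}(L)\ge p^{-j}p^{d_j(L)}$.

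Part (c) is a bookkeeping consequence of (a) and (b): by (a), $\log a_{p^j}(L)\le\sum_{i=0}^{j-1}d_i(L)$, and by (b) together with the monotonicity of $R$, $\log R_{p^j}(L)\ge\log R_{p^i}(L)\ge d_i(L)-i$ for every $i\le j$. Setting $c=\max_{0\le i\le j-1}\bigl(d_i(L)-i\bigr)$ we get $\log R_{p^j}(L)\ge c$ and $d_i(L)\le c+i$ for $i<j$, whence $\log a_{p^j}(L)\le\sum_{i=0}^{j-1}(c+i)=jc+\tfrac{j(j-1)}{2}$; solving for $c$ yields $\log R_{p^j}(L)\ge c\ge\tfrac1j\log a_{p^j}(L)-\tfrac{j-1}{2}$. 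For (d), every irreducible complex representation $\sigma$ of $L$ of dimension $p^j$ is continuous and so factors through a finite $p$-group quotient of $L$; such groups are monomial, so $\sigma\cong\mathrm{Ind}_M^L\chi$ for an open subgroup $M\le L$ with $[L:M]=\dim\sigma=p^j$ and a linear character $\chi$ of $M$. Since $\mathrm{Ind}_M^L\chi$ is determined by the pair $(M,\chi)$ and $M$ has at most $p^{e(M)}\le p^{e_j(L)}$ linear characters, summing over the $a_{p^j}(L)$ open subgroups of index $p^j$ gives the stated bound of part (d).

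I expect no serious obstacle here: the proposition is essentially a sequence of elementary estimates. The one genuinely non-elementary ingredient is the monomiality of representations of finite $p$-groups invoked in (d), which is classical, and the only mildly delicate point is the restriction count in (b) — bounding the number of one-dimensional constituents of $\sigma|_M$ by $\dim\sigma$ — which, as noted, is immediate from additivity of dimension over constituents.
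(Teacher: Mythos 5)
Your argument follows essentially the same route as the paper's proof, just spelled out in full. Where the paper simply cites \cite{LS} for part (a), invokes Proposition~\ref{Res-Ind} for (b), combines (a) and (b) by a telescoping estimate for (c), and refers to Isaacs for monomiality of finite $p$-groups in (d), you supply direct arguments. Your chain-counting proof of (a), your Frobenius-reciprocity argument for (b) (which is just the $n=1$, $m=p^j$ case of Proposition~\ref{Res-Ind} made explicit, with the fiber bound coming from the count of distinct one-dimensional constituents of $\sigma|_M$), and your derivation of (c) are all correct and match the paper's intent.

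One point to flag in part (d). Given the paper's definition $e(M)=\log_p(|M/[M,M]|)$, an open subgroup $M$ of index $p^j$ has $|M^{\mathrm{ab}}|=p^{e(M)}\le p^{e_j(L)}$ linear characters, so the monomiality argument --- yours and the paper's alike --- produces the bound $r_{p^j}(L)\le a_{p^j}(L)\,p^{e_j(L)}$, which is indeed what appears in the body of your argument. But the printed statement of (d) reads $r_{p^j}(L)\le a_{p^j}(L)\cdot e_j(L)$, without the exponential. You describe your conclusion as ``the stated bound of part (d),'' which it is not, literally: either $e(M)$ was meant to be $|M/[M,M]|$ itself rather than its $\log_p$, or the factor in (d) should be $p^{e_j(L)}$. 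Since (d) is not invoked later in the paper this is harmless, but your write-up should acknowledge the discrepancy rather than silently substitute the corrected form.

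Two smaller remarks. In (b) it is worth saying explicitly that distinct characters $\chi\ne\chi'$ with $\sigma_\chi=\sigma_{\chi'}=\sigma$ are both $M$-constituents of $\sigma|_M$, which is where the bound of $p^j$ on the fiber comes from; you gesture at this but it is the crux. In (d) you also need the observation that a continuous irreducible complex representation of a finitely generated pro-$p$ group has finite image (its image is a compact, totally disconnected subgroup of $\mathrm{GL}_n(\bbc)$, hence finite); you assert this as ``factors through a finite $p$-group quotient'' without justification, and it deserves a sentence.
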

\begin{proof}  (a) follows from \cite[Proposition 1.6.2]{LS}
while (b) follows from Proposition 4.4 above.  Now, by applying
(a) and then (b) we have:
\begin{equation*}
a_{p^j} (L) \le \prod\limits^{j-1}_{i=0} p^{d_i(L)} \le
\prod^{j-1}_{i=0} p^i R_{p^i} (L)
\end{equation*}
which gives (c).  Finally, (d) follows from the fact that a finite
$p$-group is an $M$-group (\cite{I}), i.e. every irreducible
representation of it of degree $p^j$ is induced  from a one
dimensional character of some subgroup of index $p^j$.
\end{proof}

\begin{corollary}
\label{fast-subgroups}
If the subgroup growth rate of $L$ is faster
than $n^{\log n}$\break (i.e. $\lim \sup \log s_n(L)/(\log n)^2 =
\infty)$ then $L$ does not have polynomial representation growth,
i.e. $\rho(L) = \infty$.
\end{corollary}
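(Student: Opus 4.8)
The plan is to prove the contrapositive: assuming $\rho(L)<\infty$, I will deduce $\limsup_n\log s_n(L)/(\log n)^2<\infty$. The only serious input needed is part (c) of Proposition~\ref{compare-growth}, together with the pro-$p$ structure of $L$, which forces every open subgroup to have $p$-power index. The latter means that $s_n(L)$ is constant on each interval $p^j\le n<p^{j+1}$, that $s_{p^j}(L)=\sum_{i=0}^j a_{p^i}(L)$, and hence that
\[
\limsup_{n\to\infty}\frac{\log s_n(L)}{(\log n)^2}<\infty
\quad\Longleftrightarrow\quad
\limsup_{j\to\infty}\frac{\log s_{p^j}(L)}{j^2}<\infty .
\]
Since $s_{p^j}(L)\le(j+1)\max_{0\le i\le j}a_{p^i}(L)$, the first step reduces the whole problem to showing $\log a_{p^j}(L)=O(j^2)$.

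Next I would feed the hypothesis $\rho(L)=\rho<\infty$ into part (c). That hypothesis gives $R_n(L)\le n^{\rho+1}$ for all large $n$, so $\log R_{p^j}(L)\le(\rho+1)\,j\log p$ for $j$ large. Part (c) of Proposition~\ref{compare-growth} reads $\log R_{p^j}(L)\ge\frac1j\log a_{p^j}(L)-\frac{j-1}{2}$, which rearranges to
\[
\log a_{p^j}(L)\;\le\;j\log R_{p^j}(L)+\frac{j(j-1)}{2}\;\le\;(\rho+1)(\log p)\,j^2+\frac{j(j-1)}{2}\;=\;O(j^2).
\]
Combining this with $\log s_{p^j}(L)\le\log(j+1)+\max_{0\le i\le j}\log a_{p^i}(L)$ gives $\log s_{p^j}(L)=O(j^2)$, hence $\limsup_n\log s_n(L)/(\log n)^2<\infty$. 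This contradicts the hypothesis, so $\rho(L)=\infty$.

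I do not expect a genuine obstacle here once Proposition~\ref{compare-growth}(c) is in hand; the one place that needs a little care is exactly the bookkeeping in the first paragraph — converting the fast-growth hypothesis, which is phrased in terms of the cumulative quantity $s_n$, into a bound on a single $a_{p^j}(L)$ with the index $j$ controlled, so that the factor $\frac1j$ appearing in part (c) does no damage. The crude losses incurred (the factor $j+1$ and the $\max$ over $i\le j$) are harmless at the $(\log n)^2\sim j^2(\log p)^2$ scale, so no sharper estimate is required.
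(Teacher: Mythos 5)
Your proof is correct and matches the paper's intended argument: the paper's entire proof is the one-line statement that the corollary ``follows from Part (c) of Proposition~\ref{compare-growth},'' and you have simply supplied the routine bookkeeping (passing to the contrapositive, converting $\rho(L)<\infty$ into $\log R_{p^j}(L)=O(j)$, rearranging part~(c) to get $\log a_{p^j}(L)=O(j^2)$, and then summing to control $s_{p^j}(L)$). Nothing differs in substance from what the authors had in mind.
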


The Corollary follows from Part (c) of Proposition~\ref{compare-growth}.  We
should remark, that this Corollary is the best possible: it is
shown in \cite{LuMr} that $\SL_d(\bbf_p[[t]])$ (which is a
virtually pro-$p$ group) has polynomial representation growth,
while its subgroup growth is $n^{\log n}$ (see \cite[Chapter
4]{LS}).

Let us now use the above observations to treat the special case of
Theorem~\ref{power-of-SL2}(a) when $H=\SL_2 (\bbc)$ and $\Gamma$ a cocompact
lattice in $H$.  A well known conjecture, attributed to Thurston,
asserts that in this case, $\Gamma$ has a finite index subgroup
$\Delta$ which maps onto $\bbz$.   This would give our claim
immediately.  However, the conjecture remains wide open.  Still,
it was shown  in [Lu1] that such $\Gamma$ has a finite index
subgroup whose pro-$p$  completion $L$ is a Golod-Shafarevich
group (i.e. $d(L) \ge 4$ while $r(L) < d(L)^2/4$ where $r(L)$ is
the minimal number of pro-$p$ relations of $L$, i.e. $r(L) = \dim
H^2(L, \bbf_p)$).  For such groups, Shalev (cf. \cite[Theorem
4.6.4]{LS}) proved that for every $\e>0$, $a_n(L) \ge n^{(\log
n)^{2-\e}}$ for infinitely many integers $n$.  Thus
Corollary~\ref{fast-subgroups}
implies that $\rho(\Gamma) \ge \rho(L) = \infty$.

We mention in passing that Shalen and Wagreich (see [SW, Lemma
1.3]) proved a slightly better estimate on $d_j(\Gamma)$ (and
hence on $a_n(\Gamma)$).  A much better estimate was given
recently by Lackenby \cite{Ly}.

Now, to complete the proof of (a) of the theorem, we recall that
in all other cases, the analogue of Thurston's conjecture is true.
In fact, it is even known (by several different methods of proof; see
discussion in \cite[\S7.3]{LS}) that in all these cases
$\Gamma$ has a finite index subgroup which is mapped onto a
non-abelian free group.  Thus, clearly $\rho(\Gamma) = \infty$.

For (b), by Corollary~\ref{Commensurable} and
Proposition~\ref{Euler}, we may assume without loss of generality
that
\begin{equation}
\label{Factor} \calz_{\Gamma} (s) = \calz_{\G(\bbc)}
(s)^{\#S_\infty} \cdot \prod\limits_{v \notin S} \calz_{L_v} (s)
\end{equation}
where $G(\bbc)=\SL_2(\bbc)$ and
all but finitely many $L_v$ are of the form $\SL_2(\calo_v)$ where
$\calo_v$ is the ring of integers of the completion of the global
field $k$ at $v$, and the remaining $L_v$ are compact open
subgroups of groups which are either of the form $\SL_2$ of a
local field or $\SL_1$ of a quaternion algebra over a local field.
The Euler factors corresponding to these remaining factors have
abscissa of convergence $1$ by Theorems \ref{SL2}, \ref{SL1} and
\ref{not-char-2}. In determining whether $\calz_{\Gamma}(s)$ does or does not
have abscissa of convergence 2, they may therefore be omitted from
the Euler product. Likewise, $\calz_{\G(\bbc)} (s)$ has abscissa
of convergence 1 by Theorem~\ref{Archimedean}, so the first factor
on the right hand side of (\ref{Factor}) may be omitted from the
Euler product.  It remains to consider the abscissa of convergence
of
\begin{equation}
\label{OpenCurve}
\prod\limits_{v\notin T} \calz_{\SL_2(\calo_v)} (s)
\end{equation}
for some finite set of places $T$ of $k$.

As the $K_i$ are not of characteristic 2, the same is true for $k$ and therefore for the $k_v$.
For $s$ in the interval $[2,3]$, we have
$2^s\le 8$, and
\[(q+1)^{-s} < q^{-s} < (q-1)^{-s} \le 8 q^{-s}.\]
By Theorem~\ref{SL2}, for $q$ odd,
\begin{align*}
\calz_{\SL_2(\calo_v)}(s)
&> 1 + \left(q^{-s}+\frac{q-1}{2}(q-1)^{-s}\right)+
\frac{4q\left(\frac{q^2-1}2\right)^{-s}+\frac{q^2-1}2(q^2-q)^{-s}}{1-q^{1-s}} \\
&>1+\frac q2q^{-s} + \frac{\frac{q^2}2(q^2)^{-s}}{1-q^{1-s}} \\
&= 1 + \half q^{1-s} + \half (q^{1-s})^2(1-q^{1-s})^{-1} > (1-q^{1-s})^{-\half} \\
\end{align*}
In the other direction, we have
\begin{align*}
\calz_{\SL_2(\calo_v)}(s)
&< 1+q^{-s}+q^{1-s}+16q^{-s}+4q^{1-s}+128q^{-s}
+\frac{256q^{1-2s}+4q^{2-2s}+q^{2-2s}}{1-q^{1-s}} \\
&< 1 + 100q^{1-s} + \frac{1000 q^{2-2s}}{1-q^{1-s}}
< (1-q^{1-s})^{-100}.\\
\end{align*}
There are finitely many Euler factors for which $q$ is even (and
none at all if $k$ is of positive characteristic).  We may therefore assume
$q$ is odd for all Euler factors and prove
that $\calz_{\Gamma}(s)$ converges for $s>2$ and diverges
for $s=2$ by comparing the product (\ref{OpenCurve}) with
$\zeta_{k,T}(s-1)^{1/2}$ and $\zeta_{k,T}(s-1)^{100}$, where
$\zeta_{k,T}(s)$ is the usual Dedekind $\zeta$-function of $k$
with the Euler factors at $T$ removed (which is analytic for
$\Re(s) > 1$ and has a simple pole at $s=1$.) \qed

\section{Remarks and suggestions for further research}

Clearly, we are still at the qualitative stage in our understanding of
the abscissa of convergence for representation zeta-functions.  We
mention some of the questions left open by this paper.

For general finitely generated groups $\Gamma$,
are there any positive values
which cannot be achieved?  For infinite linear groups,
$\frac1{15}$ is probably not optimal.  A better understanding of
$\rho(U)$ where $U$ is a compact open subgroup of $E_8(k_v)$
seems likely to improve that value.  We do not even have a conjecture
regarding the greatest lower bound.

For arithmetic groups $\Gamma$ satisfying the congruence subgroup
property, we still lack a plausible conjecture for the value of
$\rho(\Gamma)$.  It is conceivable that without determining the
actual value, one can prove that $\rho(\Gamma)$ is always rational
in this setting.  We do not know if the values $\rho(\Gamma)$ as
$\Gamma$ ranges over arithmetic groups satisfying the CSP are
bounded above. By combining the results of \cite{LiSh2} with
upper bound estimates of the kind developed in Theorem~\ref{not-char-2}
likely that one can prove
$$\rho(\Gamma) \le c + \sup_v \rho(\Gamma_v),$$
where $\Gamma_v$ denotes the $v$-adic completion of $\Gamma$
and $c$ is an absolute constant.

This raises the question as to whether one can find reasonable
upper bounds for $\rho(U)$ for compact open subgroups $U\subset
\bg(K)$ of almost simple algebraic groups over non-archimedean
local fields. For instance, is there an absolute constant which
works for all $\bg$ and all $K$?  In a different direction, can
one prove equality for the values of $\rho$ for groups of fixed
type ($\SL_n$ for example), as $K$ ranges over local fields?
(Compare Theorem~\ref{SL2}, Theorem~\ref{SL1}, and \cite{LuNi}.) It is
conceivable that one could do so without being able to compute the
common value. As a step toward computing $\rho(\SL_n(\bbz_p))$, it
would be interesting to estimate the number of conjugacy classes
in $\SL_n(\bbz/p^r\bbz)$, for instance when $n$ and $p$ are fixed
and $r$ is allowed to grow.

One approach to these problems would be to try to imitate the
method of Theorem~\ref{SL1}.  Let $\bg$ be a group scheme of
finite type over the ring $\calo_K$ of  integers in a local field
$K$ with almost simple generic fiber.  Let $U = \bg(\calo_K)$ and
let $U_r$ denote the kernel of $U\to \bg(\calo_K/\pi_K^r)$. Every
element of $U/U_r$ lifts to a regular semisimple element of $U$.
Up to $\bg(K)$-conjugacy, there are finitely many maximal tori
$\bt_i$ in the generic fiber of $\bg$, and any regular semisimple
conjugacy class meets exactly one such maximal torus, and meets it
at finitely many points. The conjugacy classes of $U$ up to
$\bg(K)$-conjugacy are what gives rise to the general lower bound of
Proposition~\ref{ssimp-bound}.

Describing the regular semisimple conjugacy classes in $U$ (rather
than $\bg(K)$) brings the Bruhat-Tits building $\calb$ of $\bg$
over $K$ into the picture. (Note that for anisotropic groups, where
the building is trivial, Theorem~\ref{SL1}
says that Proposition~\ref{ssimp-bound} is
sharp.) For simplicity, let us suppose that $U$ is exactly the
stabilizer of a vertex  $x_0$ of the building. If, for example,
$g\in U_r$, then it fixes all the vertices in $B_{x_0}(r)$, the
ball of radius $r$ centered at $x_0$ in $\calb$. Now, if $h_i\in
\bg(K), i = 1, 2$,  and $h_i(x_0)\in B_{x_0}(r)$, then $h^{-1}_i g
h_i$ fixes $x_0$ and therefore lies in $U$. But $h_1^{-1} g h_1$
and $h^{-1}_2gh_2$ are not necessarily conjugate to each other in
$U$.  If $g$ is regular semisimple, then
$$u^{-1}(h_1^{-1}gh_1)u = h_2^{-1}gh_2,$$
is equivalent to
$h_2 u^{-1} h_1^{-1}\in Z_{\bg(K)}(g) = \bt(K)$, where $\bt$ is the
unique maximal torus containing $g$.  In other words, $h_2$
belongs to the double coset $\bt(K) h_1 U$, or, yet again,
$h_2(x_0)$ lies in the $\bt(K)$-orbit of $h_1(x_0)$. Thus,
counting torus orbits in balls in the building is closely
connected with the problem of classifying conjugacy classes in $U$
and thereby the problem of counting conjugacy classes in $U/U_r$.

It strongly suggests that when the building $\calb$ is ``larger,"
there are more conjugacy classes in $U$ (and $U/U_r)$ and
$\rho(U)$ tends to be larger.  As mentioned above, it is still not
clear if $\rho(U)$ can be arbitrarily large.  A good test case: is
$\rho(\SL_n(\bbz_p))$ bounded above independent of $n$?

\end{document}